\documentclass[numbers,webpdf,imaiai]{ima-authoring-template}%

\graphicspath{{figures/}}

\theoremstyle{thmstyletwo}%
\newtheorem{theorem}{Theorem}[section]
\newtheorem{proposition}[theorem]{Proposition}%
\newtheorem{assumption}[theorem]{Assumption}%
\newtheorem{corollary}[theorem]{Corollary}
\newtheorem{definition}[theorem]{Definition}
\newtheorem{notation}[theorem]{Notation}

\newtheorem{lemma}[theorem]{Lemma}

\newtheorem{remark}[theorem]{Remark}

\numberwithin{equation}{section}

\newcommand{\scT}{\mathcal{T}}

\newcommand{\scS}{\mathcal{S}}

\newcommand{\txtb}{\textup{\texttt{b}}}
\newcommand{\txtc}{\textup{\texttt{c}}}

\newcommand{\wV}{\widetilde{V}}
\newcommand{\whV}{\widehat{V}}
\newcommand{\whv}{\widehat{v}}
\newcommand{\whw}{\widehat{w}}
\newcommand{\whp}{\widehat{p}}
\newcommand{\bfmin}{\mathbf{min}\,}

\newcommand{\shapereg}{C_{\textup{\textsf{SR}}}}
\newcommand{\CPOU}{C_{\textup{\textsf{POU}}}}

\usepackage{bm}

\usepackage[show]{ed}   
\usepackage{soul}
\definecolor{darkgreen}{rgb}{0.,0.7,0.}

\newcommand{\Ceig}{C_{\textup{\textsf{eig}}}} 
\newcommand{\Cinv}{C_{\textup{\textsf{inv}}}}
\newcommand{\Cinvt}{C^2_{\textup{\textsf{inv}}}}
\newcommand{\Cpos}{C_{\textup{\textsf{pos}}}}
\newcommand{\Cstar}{C_{\ast}}

\newcommand{\assign}{\mathrel{\mathop:}=}

\newcommand{\tV}{\widetilde{V}}
\numberwithin{equation}{section}

\DeclareMathOperator{\diverg}{div}
\DeclareMathOperator{\supp}{supp}
\DeclareMathOperator{\diam}{diam}

\DeclareMathOperator{\linspan}{span}
\DeclareMathOperator{\dof}{dof}
\newcommand{\ovdof}{\overline{\dof}}
\newcommand{\rd}{\mathrm{d}}

\newcommand{\Cstab}{C_{\textup{\textsf{stab}}}}
\newcommand{\amin}{a_{\textup{\textsf{min}}}}
\newcommand{\amax}{a_{\textup{\textsf{max}}}}
\newcommand{\nmin}{n_{\textup{\textsf{min}}}}
\newcommand{\nmax}{n_{\textup{\textsf{max}}}}
\renewcommand{\kappa}{k}

\newcounter{counter_a}
\newenvironment{myenum}{\begin{list}{\textrm{\textup{(\roman{counter_a})}}\;}%
{\usecounter{counter_a}
\setlength{\itemsep}{0.5ex}\setlength{\topsep}{0.7ex}
\setlength{\leftmargin}{5ex}\setlength{\labelwidth}{6ex}}}{\end{list}}

\newenvironment{mybull}{\begin{list}{$\bullet$\;}%
{\setlength{\itemsep}{0.5ex}\setlength{\topsep}{0.7ex}
\setlength{\leftmargin}{4ex}\setlength{\labelwidth}{6ex}}}{\end{list}}

\usepackage{ulem}

\begin{document}

\DOI{DOI HERE}
\copyrightyear{2026}
\vol{00}
\pubyear{2026}
\access{Advance Access Publication Date: Day Month Year}
\appnotes{Paper}
\copyrightstatement{Published by Oxford University Press on behalf of the Institute of Mathematics and its Applications. All rights reserved.}
\firstpage{1}

\title[Coarse spaces based on indefinite operators]{Spectral coarse spaces based on indefinite operators: the $H_k$-GenEO method}

\author{Théophile Chaumont-Frelet \ORCID{0000-0002-6210-0774}
\address{\orgname{Inria Univ. Lille and Laboratoire Paul Painlevé}, \orgaddress{ \postcode{59655},  \state{Villeneuve-d’Ascq},\country{France}}}}

\author{Victorita Dolean* \ORCID{0000-0002-5885-1903}
\address{\orgdiv{Department Mathematics and Computer Science}, \orgname{Eindhoven University of Technology}, \orgaddress{\state{Eindhoven}, \postcode{5612 AE}, \country{The Netherlands}}}}

\author{Mark Fry \ORCID{0009-0003-0543-8714}
\address{\orgdiv{Department of Mathematics and Statistics}, \orgname{University of Strathclyde}, \orgaddress{\state{Glasgow}, \postcode{G1 1XH}, \country{UK}}}}

\author{Ivan G. Graham \ORCID{0000-0002-5730-676X}
\address{\orgdiv{Department of Mathematical Sciences}, \orgname{University of Bath}, \orgaddress{\state{Bath}, \postcode{BA2 7AY}, \country{United Kingdom}}}}

\author{Matthias Langer \ORCID{0000-0001-8813-7914}
\address{\orgdiv{Department of Mathematics and Statistics}, \orgname{University of Strathclyde}, \orgaddress{\state{Glasgow}, \postcode{G1 1XH}, \country{UK}}}}

\authormark{Chaumont-Frelet et al.}

\corresp[*]{Corresponding author: \href{v.dolean.maini@tue.nl}{v.dolean.maini@tue.nl}}

\received{Date}{0}{Year}
\revised{Date}{0}{Year}
\accepted{Date}{0}{Year}

\abstract{
GenEO (`Generalised Eigenvalue problems on the Overlap') is a method for constructing coarse spaces used in the preconditioning 
of iterative solvers for discrete PDEs. This method combines  a (small) number of modes of local PDE eigenproblems 
to obtain a global coarse space.  A coarse solve is then combined with local solves of the global PDE to obtain the preconditioner. 
A substantial theory for GenEO has been developed for the case when the local elgenproblems are positive semi-definite. 
This has been applied mostly to  positive definite global PDEs, but also recently extended to the case of 
convection--diffusion--reaction problems, which may be neither self-adjoint, nor positive definite. 
However, when the global problem is highly indefinite, coarse spaces built from positive semi-definite local eigenproblems 
fail to be robust in practice.
In this paper we consider highly indefinite global PDE problems, characterised by a large parameter $k$
(allowing also highly variable coefficients), and we develop a new spectral coarse space built 
from solving eigenvalue problems based on \textit{local copies of the global problem}. 
We put no constraint on the diameters of the local domains, thus allowing the local eigenvalue problems to be indefinite.
The new method (which we call $H_k$-GenEO) is seen to be much more robust as $k$ increases than methods based on positive semi-definite eigenproblems.
We provide  sufficient conditions for robustness of the preconditioned GMRES iterative method, in terms of the
tolerance of the local eigenproblems and the size of the subdomains for the local PDE solves. 
In practice the method is observed to be robust with respect to $k$ under even weaker conditions on the local eigenproblem tolerance. 
The experiments also suggest the method can be resilient to high variation in PDE coefficients.}
\keywords{elliptic PDE; finite element method; domain decomposition; preconditioning; GMRES;
indefinite problem; spectral coarse space; robustness; Helmholtz problem.}

\maketitle

\section{Introduction}
This paper is concerned with the theory and implementation of two-level domain decomposition
preconditioned iterative methods for solving finite element approximations of the indefinite boundary value problem: 
\begin{subequations}
\label{eq:problem}
	\begin{alignat}{2}
		-\nabla \cdot (A \nabla u) - k^2 n u &= f \qquad && \text{in } \Omega, 
		\label{pde} \\
		u &= 0 \qquad && \text{on } \partial\Omega,
		\label{bc}
	\end{alignat}
\end{subequations}
where $k>0$, $A$ is an almost everywhere positive definite matrix with $L^\infty$ entries and $n$ is 
an almost everywhere positive $L^\infty$ function. 
The  domain $\Omega \subseteq \mathbb{R}^{d}$ ($d=2,3$) is assumed to be a Lipschitz polygon ($d=2$) or polyhedron ($d=3$). 
We emphasise the highly indefinite case---when $k$ is large---since this presents particular challenges for iterative solvers. 
While \eqref{pde} is instantly recognisable as a (heterogeneous) Helmholtz equation, the appearance of the Dirichlet boundary condition \eqref{bc} 
means that  \eqref{eq:problem} does not directly model scattering problems (where \eqref{pde} would normally be posed on an infinite domain, 
accompanied by a radiation condition instead of \eqref{bc}). 
However, \eqref{eq:problem} does appear in other physical models of wave phenomena (e.g.\ a vibrating membrane constrained at its boundary) 
and its solution typically oscillates with decreasing period as $k \rightarrow \infty$. 
Thus \eqref{eq:problem} provides us with a relevant highly indefinite model problem with variable coefficients, 
and we use it here to assess the performance of a new class of iterative solvers for such problems. 
We will assume throughout that the `wavenumber' $k$ is such that the problem \eqref{pde}, \eqref{bc} is well posed,
in the sense that it has a unique weak solution for all $f \in L^2(\Omega)$. 

The problem \eqref{eq:problem} is discretised using conforming finite elements to obtain a linear system (see \eqref{eq: 2_12}). 
When this system is very large (especially in 3D), it should be solved iteratively using a good preconditioner 
(i.e.\ a cheap approximate inverse).  
Here we propose a novel  domain decomposition preconditioner and analyse its performance when used with GMRES as the iterative solver. 
In classical one-level overlapping domain decomposition methods, the global domain $\Omega$ is covered by a set of 
overlapping `local' subdomains $\Omega_j^\ell$, $j=1,\ldots,Q$,
and the one-level Schwarz preconditioner is built from partial solutions of the global problem on each subdomain.
Since this is, in general, not scalable as the number of subdomains grows, an additional global coarse solve is usually added to enhance scalability, 
as well as robustness with respect to coefficient heterogeneity or increasing $k$.

\smallskip
\noindent
\textbf{Spectral coarse spaces.}
Since classical coarse spaces (piecewise polynomials on a coarse grid), are generally not robust for highly heterogeneous problems, various  
operator-dependent coarse spaces have been  introduced to better treat the heterogeneity,  
e.g.\ `spectral' coarse spaces, built from well-chosen modes of suitable generalised PDE eigenvalue problems
defined on subdomains.  Such spectral information was used  for both robust solution approximation  (e.g.\    
\cite{Babuska:2011:OMSGFEM,Efendiev:2013:GMF}) and for  preconditioning in \cite{Galvisa:2010:DDP,Galvisb:2010:DDP}. 
However, all this work was for positive definite (albeit highly heterogeneous) PDEs.

In this paper we propose and analyse a novel variant of the GenEO spectral coarse space.
GenEO coarse spaces were first introduced in \cite{Spillane:2014:ARC}, where it was proved that, 
for self-adjoint positive definite problems, when combined with a one-level method, and using a preconditioned conjugate gradient solver,  
the resulting algorithm is not only scalable with respect to the number of subdomains but also robust with respect to coefficient variation.   
Related works for positive definite problems include, 
for example \cite{Agullo:2019:RPG, Heinlein:2019:AGC,Bastian:2021:MSD,Spillane:2021:TFA}; 
see also \cite{Spillane:2014:ARC} and \cite{Galvis:2018:ODD}. {Here we construct coarse spaces  for \eqref{eq:problem}
from eigenproblems defined on each subdomain of an  overlapping `coarse' cover:  $\Omega_i^c$, $i=1,\ldots,N$.
This \textit{need not be related} to the `local' cover  $\Omega_j^\ell$ introduced above;
indeed the $\Omega_i^c$ can be  relatively large.
To explain ideas simply, now let $\xi_i^c$,  $i=1,\ldots,N$  be a partition of unity subordinate to the coarse cover.
The original GenEO method was aimed at  problem \eqref{eq:problem} with $k = 0$, and built a coarse space  
from selected modes of the generalised eigenvalue problem (GEVP):
\begin{equation} \label{new1} 
	-\nabla \cdot (A \nabla p) =  \lambda \bigl[-\xi_i \nabla \cdot (A \nabla (\xi_i  p))\bigr] \qquad \text{on} \;\; \Omega_i^c,
\end{equation} 
(computed using standard variational finite element approximation).  The eigenmodes of \eqref{new1} provide
a powerful coarse space for \eqref{eq:problem} when $A$ is highly heterogeneous and $k = 0$ \cite{Spillane:2014:ARC}.
However, they are less suitable when $k$ is very large, as demonstrated numerically in
\cite{Bootland:2021:ACS,Bootland:2022:GCS} where the coarse space based on \eqref{new1} (and called  $\Delta$-GenEO) 
was compared to a coarse space (called $H$-GenEO), based on eigenmodes of the GEVP:
\begin{equation} \label{new2} 
	-\nabla \cdot (A \nabla p) - k^2 n p  =  \lambda \bigl[-\xi_i \nabla \cdot (A \nabla (\xi_i  p))\bigr] \qquad \text{on} \;\; \Omega_i^c. 
\end{equation}
The indefinite operator from \eqref{pde} now appears on the left-hand side of \eqref{new2}, ensuring
that some eigenmodes of this problem will be oscillatory (provided the $\Omega_i^c$ are large enough). 
In \cite{Bootland:2021:ACS,Bootland:2022:GCS} $H$-GenEO was found to have better robustness properties than $\Delta$-GenEO 
at high frequency but no theory was provided. 
In the current paper we propose and  analyse the `$H_\kappa$-GenEO' coarse space based on the following variant of \eqref{new2}:
\begin{align} \label{new3} 
	-\nabla \cdot (A \nabla p) - k^2 n p  = \lambda \bigl[-\xi_i \nabla \cdot (A \nabla (\xi_i  p)) + k^2 \xi_i n (\xi_i p) \bigr]  
	\qquad \text{on} \;\; \Omega_i^c. 
\end{align}
The same indefinite operator appears on the left-hand side, while the right-hand side has the extra positive term 
making it close to the norm in which we do the analysis of this paper. 
Our actual analysis is for the finite element approximation of this---where the multiplication $\xi_i p$ 
is replaced by an algebraically defined operator $\Xi_i(p)$; see \eqref{eq: 5_12}.       

\vspace{0.1cm}
\noindent
\textbf{Main results.}
Our main results are  given in Theorem~\ref{theorem: convergence} and Corollary~\ref{conditions_corollary} 
and we give a simplified summary of these here.  
Let $H_\ell$ (respectively $H_c$) denote the maximum diameter of the `local' subdomains $\Omega_j^\ell$ 
(respectively `coarse' subdomains $\Omega_i^c$).  
On each $\Omega_i^c$ let $\{\lambda_m^i: m = 1,2,\ldots\}$ denote the eigenvalues of the numerical approximation of
\eqref{new3} (real and ordered in non-decreasing order). 
Then, if only the first $m_i$ eigenfunctions on each $\Omega_i^c$ are used in the coarse space, and $\lambda_{m_i +1 }^i > 0$, 
we define   
\begin{equation}
	\tau \assign \min_{i=1}^N \lambda^i_{m_i+1}
	\label{def:tau}
\end{equation}
to be the \textit{eigenvalue tolerance}.
Let $\Cstab > 0$ denote the stability constant arising from the assumption that \eqref{eq:problem} has a unique weak solution 
in $H^1_0(\Omega)$ for each $f \in L^2(\Omega)$  (see  Assumption~\ref{Ass: 2_3}).   
Without loss of generality, we assume that, on $\Omega$, the eigenvalues of the coefficient matrix $A$ are all $\ge 1$, 
and all values of $n$ are $\leq 1$ (Assumption~\ref{ass: 2_1}). 
Then, when the 2-level additive Schwarz preconditioned system is solved by GMRES, our theory proves 
its robust convergence under certain conditions. 
For this illustration, suppose the finite element mesh is quasiuniform with mesh diameter satisfying $h \sim k^{-1}$. 
Then, $k$-robustness holds provided
\begin{equation}
\label{Hk_results}
    H_\ell \lesssim k^{-1} \qquad \text{and} \qquad  \tau \gtrsim  (1+\Cstab)^{2}\, k^{2} ,
\end{equation}
where the hidden constants depend on the number of times any subdomain is overlapped by others. 
The rate of GMRES convergence depends modestly on $\amax$ (the maximum eigenvalue of $A$) and $\nmin$ (the minimum value of $n$), 
although this dependence is minimal in practice. Our results extend to more general meshes, with stricter conditions for robustness.

The first condition in \eqref{Hk_results} arises because $H_\ell$ is required to be small enough to guarantee the solvability of the local problems.  
\textit{Importantly, our theory imposes no explicit upper constraint on the coarse diameter $H_c$}, 
and we see in the numerical results that it is beneficial to have the coarse subdomains large enough, 
making the GEVP \eqref{new3} indefinite, and providing oscillatory modes.  
The separation of local and coarse subdomain sizes which we employ here contrasts with some earlier work, 
which assumed a single subdomain cover for both local and coarse decomposition. 

Our analysis of \eqref{new3} given here is a substantial generalisation of the theory in \cite{Bootland:2022:OSM}, 
which studied the application of  $\Delta$-GenEO coarse spaces (using \eqref{new1}, but applied to systems arising 
from general non-self-adjoint and indefinite PDEs of convection-diffusion-reaction type). 
In that context only the much more stringent estimate $\tau \gtrsim  (1+\Cstab)^2\,k^{8}$ on the eigenvalue tolerance could be proved, 
although the $k^8$ factor was later reduced in \cite{Dolean:2024:ITE}.  
Nevertheless, the eigenvalue tolerance estimate in \eqref{Hk_results} still turns out to be rather pessimistic in practice. 
Our numerical results (in 2D)  seem to indicate that choosing a large enough positive (but constant) value for $\tau$ 
yields $k$-robust convergence of GMRES with very modest coarse space dimensions as $k$ increases. 
Remarks on this are given in \S \ref{sec:numerics}.  

To emphasise the difference between this paper and what went before, we remark that a
substantial task here concerns the study of the indefinite GEVP \eqref{new3} and the corresponding projection onto the eigenspace 
spanned by all negative (and some positive) eigenvalues.  In this respect our analysis is substantially more difficult and different 
from earlier studies using the positive definite problem \eqref{new1}. 
\textit{Since this spectral theory may be interesting in its own right, it is written in an abstract form in Section~\ref{subsec:abstract}}.  
Our proof of \eqref{Hk_results} uses the `Elman theory' for GMRES (\cite{Elman:1983:VIM} or \cite[Lemma C.11]{Toselli:2005:DDM}),
and proceeds by proving an upper bound for the norm of the preconditioned matrix and a lower bound on the 
distance of its field of values from the origin. We estimate these quantities, explicitly analysing their dependence on, 
and independence of, the key parameters of the problem.

\vspace{0.1cm}
\noindent
\textbf{Other relevant work.}
The first publications on spectral coarse spaces for Helmholtz problems with truncated far-field Sommerfeld radiation condition   
include methods based on Dirichlet-to-Neumann maps \cite{Conen:2014:ACS,Bootland:2019:ODN},
while a  numerical survey is given in  \cite{Bootland:2021:ACS} and then enriched in \cite{Dolean:2026:AWR}. 
In \cite{MaAlSc:23} approximation spaces for Helmholtz problems are constructed by solving local positive semi-definite GEVPs  
(similar to \eqref{new1}) in subspaces of local Helmholtz harmonic functions.  
The resulting multiscale approximation space is shown to be  quasi-optimal 
(i.e.\ the Galerkin error is bounded by the best approximation in this space times a constant independent of the parameters---in this case 
the coefficients of the PDE and $k$).  
Subsequently \cite{MaAlSc:24} showed that the corresponding classical additive Schwarz preconditioner yields a Richardson iteration 
which is a parameter-independent contraction.  However, the subdomains in \cite{MaAlSc:24} are constrained to be
of diameter $\mathcal{O}(k^{-1})$ and therefore the local Helmholtz problems are (close to) coercive.
A similar coarse space was proposed in \cite{Hu:2024:ANC}, where Helmholtz problems with constant coefficients are analysed. 
Most of the theory in \cite{Hu:2024:ANC} is concerned with the absorptive case where $k$ is replaced by $k + \mathrm{i} \varepsilon$
with $\varepsilon>0$. In the theory for  $\varepsilon$  small the subdomain diameter is constrained to be $o(k^{-1})$
as $k \rightarrow \infty$ meaning the local problems are again coercive. 

Related results on the extension of multiscale approximation methods (originally devised
for elliptic problems with oscillatory coefficients), to the Helmholtz case are in \cite{Pe:17}.
The corresponding LOD (localised orthogonal decomposition) is used to construct a coarse space in \cite{LuXuZhZo:24}, 
where corresponding theory (for Helmholtz problems with constant coefficients) is presented. 
Related methods are discussed in \cite{FuGoLiWa:24}. 
However, these multiscale approaches are quite different from the spectral methods studied in the present paper. 

Additional recent work \cite{graham:2025:TLH} has shown that classical additive Schwarz preconditioners using coarse spaces 
based on piecewise polynomials are robust for Helmholtz problems as $k \rightarrow \infty$, 
provided the approximations on the fine and coarse levels are  quasi-optimal (independent of $k$). 
Such a prescription can allow the coarse space dimension to be much smaller than the fine space dimension for high wavenumber problems, 
although both dimensions can be quite high. Methods based on fixed degree polynomials are considered in \cite{Galkowski:2025:CTT} 
while $hp$ versions where $p$ increases logarithmically with $k$ are included in \cite{graham:2025:TLH}. 
However, spectral information is not used to construct these coarse spaces and dependence on coefficient variation is not analysed.

\vspace{0.1cm}
\noindent 
\textbf{Plan of the paper.}
In \S \ref{sec:useful} the problem is described and a theory for finite element approximation of \eqref{eq:problem} 
with elements of any order is given. 
Then the spectral theory for indefinite problems is presented in abstract followed by the domain decomposition set-up 
and then the $H_k$-GenEO coarse space is defined. In \S \ref{sec:theory} the main result is stated and the properties 
of the $H_k$-GenEO coarse space are analysed (applying the abstract theory from the previous section, and employing a number of delicate estimates). 
The main theorem is the proved in  \S \ref{sec:proof}, and numerical results given in the final section.

\section{Background and abstract theoretical results}
\label{sec:useful} 

Here we introduce the variational formulation of problem \eqref{pde}, \eqref{bc}, and its discretisation. 
Then, we present an abstract finite-dimensional theory for variational generalised eigenvalue problems with indefinite forms.  
Subsequently, we define our additive Schwarz domain decomposition preconditioner, with the novel feature being the $H_k$-GenEO coarse space. 

\subsection{Problem formulation and finite element discretisation}

The weak formulation of \eqref{eq:problem} is to find $u \in H^1_0(\Omega)$ such that 
\begin{equation}
\label{weak_form}
	b(u,v) = (f,v) \qquad \text{for all} \ v \in H^1_0(\Omega),
\end{equation}
where $f \in L^2(\Omega)$, $(\cdot, \cdot)$ is the $L^2(\Omega)$ inner product and $b : H^1_0(\Omega) \times H^1_0(\Omega) \rightarrow \mathbb{R}$ 
is defined as 
\begin{equation*}
	b(u,v) = \int_\Omega (A\nabla u \cdot \nabla v - \kappa^2{n}  uv)\,\rd x.
\end{equation*}
The following weak regularity assumptions are used throughout this work.

\begin{assumption}\label{ass: 2_1}
	The coefficients $A$ and $n$ 
	and the domain $\Omega$ in problem \eqref{eq:problem} satisfy the following:
	\begin{myenum}
		\item
		$A: \Omega \rightarrow \mathbb{R}^{d \times d}$ and $n: \Omega \rightarrow \mathbb{R}$ are measurable functions 
		with $A$ a symmetric tensor field, such that, for some $0 < \amin \le \amax$ and $0 < \nmin \le \nmax$, 
		\begin{alignat*}{2}
			\amin|\bm{\xi}|^2 &\le A(x)\bm{\xi} \cdot {\bm \xi} \le \amax|\bm{\xi}|^2 
			\qquad && \text{for a.e.} \ x \in \Omega \text{ and all } \bm{\xi} \in \mathbb{R}^d, \\
			\text{and} \qquad \nmin &\le n(x) \le \nmax \qquad && \text{for a.e.} \ x \in \Omega.
		\end{alignat*}
                      
		\item
		Since \eqref{pde} can be equivalently rewritten as
		\[
			- \nabla \cdot \biggl(\frac{A}{\amin}\biggr) \nabla u -
			\biggl(\frac{k^2 \nmax}{\amin}\biggr)
			\biggl(\frac{n}{\nmax}\biggr) u
			= \frac{f}{\amin},
		\]              
		we shall assume, without loss of generality, that $\amin = 1 = \nmax$ and that the diameter, $D_\Omega$, 
		of the domain $\Omega$ satisfies $D_\Omega \le 1$. 
		\item
		We assume that $k\ge k_0$ for some given $k_0 > 0$.
	\end{myenum}
\end{assumption}

\begin{notation}
\label{not:forms}
For any subdomain $\Omega' \subseteq \Omega$ we use $(\cdot,\cdot)_{\Omega'}$ to denote the $L^2(\Omega')$ inner product, 
$(\cdot,\cdot)_{n,\Omega'}\assign(n\cdot,\cdot)_{\Omega'}$ to denote the weighted $L^2(\Omega')$ inner product with weight $n$,
and $\|\cdot\|_{\Omega'}$ and $\|\cdot\|_{n,\Omega'}$ to denote the corresponding norms.  We also introduce the bilinear forms:
\begin{equation}
	a_{\Omega'}(u,v) \assign \int_{\Omega'} A\nabla u \cdot \nabla v \,\rd x, \qquad
	b_{\Omega'}(u,v) \assign a_{\Omega'}(u,v) - k^2(u,v)_{{n},\Omega'}
	\qquad\text{for} \ u,v\in H^1(\Omega'),
    \label{eq:2.18}
\end{equation}
and define the semi-norm induced by $a$, $|u|_{a,\Omega'} \assign \sqrt{a_{\Omega'}(u,u)}$.
We also use the $\kappa$-weighted and $A$- and $n$-dependent inner product:
\begin{equation*}
    (u,v)_{1, \kappa,\Omega'} \assign a_{\Omega'}(u,v) + \kappa^2 (u,v)_{{n},\Omega'},
\end{equation*}
and we denote the induced $\kappa$-norm by $\|u\|_{1, \kappa, \Omega'}$.
When $\Omega'= \Omega$, we abandon the subscript $\Omega$ in all the notation.
\end{notation}

The bilinear forms $a_{\Omega'}(\cdot,\cdot )$ and $(\cdot,\cdot)_{1,\kappa, \Omega'}$ are symmetric positive definite 
on $H_0^1(\Omega')$ and $H^1(\Omega')$ respectively; $b(\cdot,\cdot)$ is symmetric, but in general indefinite.
However, using the Cauchy--Schwarz inequality twice one can easily show that: 
\begin{align}
	|b_{\Omega'}(u,v)| \le \|u\|_{1,k,\Omega'}\|v\|_{1,k,\Omega'}.  \label{lem:est_b} 
\end{align}
We make the following assumption on solvability of \eqref{weak_form}.

\begin{assumption}
\label{Ass: 2_3}
For any $f\in L^2(\Omega)$, problem \eqref{weak_form} has a unique solution $u \in H_0^1(\Omega)$ and 
there exists a constant $\Cstab>0$ such that%
\begin{equation}
\label{eq: 2_10}
	\|u\|_{1,\kappa}  \le \Cstab \|f\| \qquad \text{for all} \ f\in L^2(\Omega). 
\end{equation}
\end{assumption}

\noindent 
Note that $\Cstab$ exists if $k^2$ is not a Dirichlet eigenvalue of the operator $-\frac{1}{n}\diverg(A\nabla\cdot)$.

\begin{assumption}\label{ass:mesh} 
Throughout, $\mathcal{T}_h$ will denote a family of simplicial meshes on $\Omega$, parameterised by maximum diameter $h$. 
We assume this family is \textit{shape regular}, i.e.\ there exists a constant  $\shapereg$ independent of all parameters 
such that $\rho_T \ge \shapereg h_T$ for each $T \in \mathcal{T}_h$, where $h_T$ denotes the diameter of $T$ 
and $\rho_T$ denotes the diameter of the largest inscribed sphere in $T$.  
In the analysis (see Lemma \ref{PBP_PD}) we will require that $h \leq \sqrt{C_1} k^{-1}$ for some constant $C_1$ independent of $k$.
We also assume inverse estimates:
\begin{align} \label{inv_est}
  \|\nabla v \|_{\tau} \leq \Cinv k  \|v\|_{\tau} \qquad \text{for all}
  \;\; \tau \in \mathcal{T}_h, \;\; v \in V^h, 
\end{align}
for some constant $\Cinv\ge1$.   
\end{assumption}

\begin{remark}
A sufficient condition to ensure
\eqref{inv_est} with $\Cinv$ independent of $k$ is obtained by requiring $h_\tau k$
to be bounded below with respect to $k$ for all $\tau$, so that  $h_\tau \sim 1/k$ 
(e.g.\ a quasiuniform mesh with all elements refined of order $1/k$). 
Our strongest results are when $\Cinv$ is independent of $k$.   
However, $\Cinv$ could still be quite large, allowing local mesh refinement treating corners or edges, for example.
In our estimates below, we will treat $\Cinv$ explicitly, while $\shapereg$ will remain implicit. 
\end{remark}

Let $V^h \subseteq H^1_0(\Omega)$ be the $H^1$-conforming Lagrange finite element space of polynomials of degree $r\ge 1$ 
with respect to this mesh, with nodal basis functions denoted by $\{\phi_j: j = 1, \ldots, \dim(V^h)\}$. 
The indices $j$ are also known as \textit{degrees of freedom}.  The Galerkin approximation of \eqref{weak_form} is:
\begin{equation}
\label{eq:discrete_problem}
	\text{find } u_h \in V^h \text{ such that }\qquad b(u_h,v_h) = (f,v_h) \qquad \text{for all} \ v_h \in V^h.
\end{equation}
This is equivalent to the linear system
\begin{equation}
\label{eq: 2_12}
	\mathbf{B}\mathbf{u} = \mathbf{f},
\end{equation}
where $(\mathbf{B})_{ij} \assign b(\phi_j, \phi_i)$ and $(\mathbf{f})_i \assign (f, \phi_i)$. 
Later we also need the matrices $\mathbf{A}$ and $\mathbf{S}$ that correspond to $a$ and $({n} \cdot,\cdot)$, 
namely, $(\mathbf{A})_{ij} \assign a(\phi_j, \phi_i)$ and $(\mathbf{S})_{ij} \assign ({n} \phi_j, \phi_i)$ respectively. 

The solvability of \eqref{eq:discrete_problem} for sufficiently fine meshes {and/or {high enough} polynomial degree} 
is assured by the following lemma, proved for $r=1$ {and $h \rightarrow 0$} in \cite[Theorem~2]{Schatz:1996:SNE}.
Since we could not find  this result for general  $r\ge 1$ in the literature, we give a proof here.

\begin{lemma}[Solvability for higher polynomial degrees]
\label{schatz_wang}
Let Assumptions~\ref{ass: 2_1} and \ref{Ass: 2_3} hold, and set
\begin{equation}
\label{eq:definition_gamma}
	\beta
	\assign
	k\max_{\substack{{\psi} \in L^2(\Omega) \\[0.2ex] {\|\psi\|_n = 1}}}\; \min_{v_h \in V^h} \|{z_\psi}-v_h\|_{1,k},
\end{equation}
where $z_\psi$ is the unique element of $H^1_0(\Omega)$ such that $b(w,{z_\psi}) = (w,\psi)_{{n}}$ for all $w \in H^1_0(\Omega)$. 
Then, there exists a continuous function $\theta: (0,\infty)\to(0,\infty)$, solely depending on $\Omega$, $A$, $n$ and $k$, 
with $\lim_{\tau \to 0} \theta(\tau) = 0$ and such that
\begin{equation}
\label{eq:estimate_gamma}
	\beta \le C \, \theta\Bigl(\frac{h}{r}\Bigr),
\end{equation}
where $C>0$ is a constant which can depend on the shape-regularity constant $\shapereg$.
In addition, if $\beta < 1/\sqrt{2}$, then the linear system in~\eqref{eq:discrete_problem} uniquely determines $u_h$, 
and we have, for all $f \in L^2(\Omega)$,
\begin{equation}
\label{eq:quasi_optimality}
	\|u-u_h\|_{1,k} \le \frac{1}{{1-2\beta^2}} \min_{v_h \in V^h} \|u-v_h\|_{1,k}
	\le \biggl(\frac{\beta}{{1-2\beta^2}}\biggr)\frac{1}{k} \|f\|_{{n^{-1}}}.
\end{equation}
\end{lemma}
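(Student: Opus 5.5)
The plan is the classical Schatz duality argument, with $\beta$ from \eqref{eq:definition_gamma} as the adjoint approximability constant. It splits into a qualitative part, the bound \eqref{eq:estimate_gamma}, and a quantitative part, the quasi-optimality \eqref{eq:quasi_optimality}. I would do the quantitative part first, since it is routine.

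\textbf{Quasi-optimality.} Let $u_h$ be any solution of \eqref{eq:discrete_problem} and $e = u-u_h$, so that $b(e,v_h)=0$ for all $v_h\in V^h$.
\begin{itemize}
\item \emph{Duality step.} Since $b$ is symmetric, take $\psi = e/\|e\|_n$ in the definition of $z_\psi$. Galerkin orthogonality, \eqref{lem:est_b} and the definition of $\beta$ then give
\[
k^2\|e\|_n^2 = k^2\|e\|_n\, b(e,z_\psi - v_h) \le k^2 \|e\|_n \|e\|_{1,k}\,\frac{\beta}{k}
\]
for the optimal $v_h$. Hence $k\|e\|_n\le \beta\|e\|_{1,k}$.
\item \emph{G\aa rding identity.} We have $\|e\|_{1,k}^2 = b(e,e)+2k^2\|e\|_n^2$ and $b(e,e)=b(e,u-v_h)$ for any $v_h\in V^h$. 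Therefore
\[
\|e\|_{1,k}^2 \le \|e\|_{1,k}\|u-v_h\|_{1,k} + 2\beta^2\|e\|_{1,k}^2 .
\]
\item \emph{First inequality.} If $\beta<1/\sqrt2$, dividing out gives the first inequality in \eqref{eq:quasi_optimality}.
\item \emph{Uniqueness.} Apply the same estimate with $f=0$, so $u=0$ and every discrete solution satisfies $\|u_h\|_{1,k}\le 0$. Thus the square system is injective, hence uniquely solvable.
\item \emph{Second inequality.} Note that $u = z_\psi\,\|f/n\|_n$ with $\psi = (f/n)/\|f/n\|_n$, because $(w,f) = (w,f/n)_n$. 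Also $\|f/n\|_n = \|f\|_{n^{-1}}$. The definition of $\beta$ then gives $\min_{v_h}\|u-v_h\|_{1,k}\le (\beta/k)\|f\|_{n^{-1}}$.
\end{itemize}

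\textbf{Bound on $\beta$.} This is the main obstacle, because under mere $L^\infty$ coefficients we have no $H^2$ regularity and hence no rate. So I would argue by compactness.
\begin{itemize}
\item \emph{Compactness of the solution map.} The map $\psi\mapsto z_\psi$ factors as $\psi\mapsto n\psi \in H^{-1}(\Omega)$, which is compact since $L^2\hookrightarrow H^{-1}$ compactly, followed by the solution operator $H^{-1}\to H^1_0$. The latter is bounded: Assumption~\ref{Ass: 2_3} gives injectivity, and $b$ is a compact perturbation of the coercive form $a$, so Fredholm theory gives bounded invertibility on $H^{-1}$. Hence $K=\{z_\psi:\|\psi\|_n=1\}$ is relatively compact in $H^1_0$.
\item \emph{Pointwise convergence.} For each fixed $z\in H^1_0$, the best approximation error $\min_{v_h\in V^h}\|z-v_h\|_{1,k}$ tends to $0$ as $h/r\to0$ over shape-regular meshes. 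This follows from density of smooth functions plus standard $hp$ interpolation estimates, with constants depending on $\shapereg$.
\item \emph{Uniformity.} The maps $z\mapsto \min_{v_h}\|z-v_h\|_{1,k}$ are $1$-Lipschitz. A finite $\varepsilon$-net argument on the compact set $\overline K$ therefore upgrades pointwise convergence to uniform convergence on $K$.
\item \emph{Definition of $\theta$.} Define $\tilde\theta(t)$ as $k$ times the supremum of this uniform error over all admissible meshes with $h/r\le t$. Then $\tilde\theta$ is nondecreasing and tends to $0$ as $t\to0$.
\item \emph{Continuity.} Replace $\tilde\theta$ by a continuous majorant, e.g.\ $\theta(t)=\frac1t\int_t^{2t}\tilde\theta(s)\,\rd s + t$. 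This is continuous, positive, satisfies $\theta \ge \tilde\theta$, and still tends to $0$ as $t\to0$.
\end{itemize}
This yields \eqref{eq:estimate_gamma}. The steps needing the most care are the Fredholm invertibility on $H^{-1}$ and the uniformity of the approximation estimates over shape-regular meshes in $h$ and $r$.
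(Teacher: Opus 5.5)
Your proposal is correct and follows the paper's proof essentially step by step. It uses the same Schatz duality/G\aa rding argument for \eqref{eq:quasi_optimality}; your normalisation $\psi=e/\|e\|_n$ is equivalent to the paper's choice $\xi=z_{k(u-u_h)}$. It uses the same compactness argument for \eqref{eq:estimate_gamma}: Fredholm invertibility of the solution operator $H^{-1}(\Omega)\to H^1_0(\Omega)$, the compact embedding $L^2\hookrightarrow H^{-1}$, density of $C_{\rm c}^\infty(\Omega)$, $hp$ approximation estimates and a finite $\varepsilon$-net. Your averaged majorant replaces the paper's piecewise-linear interpolation, which is only a cosmetic difference.

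One bookkeeping point. Defining $\tilde\theta$ as a supremum over all shape-regular meshes makes $\theta$ depend on $\shapereg$, whereas the statement allows that dependence only in $C$. To match the statement, keep the explicit smooth-function bound $C(h/r)|\widetilde v^j|_{H^2(\Omega)}$, with $C\ge1$ depending only on $\shapereg$. The net argument then gives $\beta\le C\bigl(\tfrac{2\varepsilon}{3}+\tfrac{kh}{r}(\amax+(kh/r)^2)^{1/2}M(\varepsilon)\bigr)$, and you choose the threshold $\delta(\varepsilon)$ from the bracket alone.
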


The uniqueness of $u_h$ and the estimates in~\eqref{eq:quasi_optimality} for $\beta < \sqrt{2}/2$
constitute a standard result typically referred to as the `Schatz argument' \cite{Schatz:1974:AOC,Schatz:1996:SNE}.  
For completeness, we review the proof of \eqref{eq:quasi_optimality} here before proving \eqref{eq:estimate_gamma}.

\begin{proof}\,
Let us first note that, since $b$ is symmetric, the definition of $\beta$ actually makes sense due to Assumption~\ref{Ass: 2_3}. 
Note also that \textit{a priori} we only have a supremum instead of a maximum in \eqref{eq:definition_gamma};
we will later justify that it is a maximum. It follows from the definition of $\beta$ that, for each $\psi\in L^2(\Omega){\backslash \{0\}}$,
\[
	k \min_{v_h \in V^h} \|{z_\psi}-v_h\|_{1,k} 
	= k \min_{v_h \in V^h} \|(z_\psi/\|\psi\|_n) - v_h\|_{1,k} \|\psi\|_n  
	= k \min_{v_h \in V^h} \|(z_{(\psi/\|\psi\|_n)} - v_h\|_{1,k} \|\psi\|_n 
	\le \beta\|\psi\|_n.
\]
Hence, there exists $v_h\in V^h$ such that
\begin{equation}\label{def_beta_reformulated}
	k\|{z_\psi} - v_h\|_{1,k} \le \beta\|\psi\|_n.
\end{equation}
Now let $f\in L^2(\Omega)$ and let $u\in H_0^1(\Omega)$ be the unique solution of \eqref{weak_form}.
Let us first assume that $u_h \in V^h$ is \textit{any} solution of~\eqref{eq:discrete_problem}.
By Assumption~\ref{Ass: 2_3}, there exists a unique $\xi  {= z_{k(u - u_h)}}\in H^1_0(\Omega)$
such that 
\begin{equation}\label{existence_xi}
	b(w,\xi) = k(w,u-u_h)_{{n}} \qquad \text{for all} \;\; w \in H^1_0(\Omega). 
\end{equation}
On one hand, (by {\eqref{def_beta_reformulated} with $\psi=k(u-u_h)$}), there exists $\xi_h \in V^h$ such that
\begin{equation*}
	\|\xi-\xi_h\|_{1,k} \le \beta \|u-u_h\|_{{n}}.
\end{equation*}
On the other hand, taking the test function $w = u-u_h$ {in \eqref{existence_xi}} 
we see that
\begin{equation*}
	k\|u-u_h\|_{{n}}^2 = b(u-u_h,\xi) = b(u-u_h,\xi-\xi_h)
	\le \|u-u_h\|_{1,k}\|\xi-\xi_h\|_{1,k},
\end{equation*}
where we used \eqref{lem:est_b} and the fact that $b(u-u_h,w_h)=0$ for $w_h\in V^h$ since $u$ solves \eqref{weak_form} 
and $u_h$ solves \eqref{eq:discrete_problem}.
Combining these two estimates we obtain
\begin{equation*}
	k\|u-u_h\|_{{n}} \le \beta\|u-u_h\|_{1,k},
\end{equation*}
and hence
\begin{align*}
	(1-2\beta^2)\|u-u_h\|_{1,k}^2
	&\le \|u-u_h\|_{1,k}^2-2k^2\|u-u_h\|_{{n}}^2 
	= b(u-u_h,u-u_h)
	\\[1ex]
	&= b(u-u_h,u-v_h)
	\le \|u-u_h\|_{1,k}\|u-v_h\|_{1,k}
\end{align*}
for all $v_h \in V^h$.  Assuming that $2\beta^2 < 1$ we obtain the first estimate in~\eqref{eq:quasi_optimality}.
The second estimate follows from the definition of $\beta$; see again \eqref{def_beta_reformulated}.

At that point, we have established \eqref{eq:quasi_optimality} for \textit{any} solution,
but we do not know yet if such a solution exists.  
However, $u_h$ is defined through a finite-dimensional linear system. 
In this setting, stability implies uniqueness, which in turns implies existence. 
This concludes the proof of~\eqref{eq:quasi_optimality}.

We now turn our attention to~\eqref{eq:estimate_gamma}.  We first observe that, thanks
to Fredholm alternative {(applied in $H^{-1}(\Omega)$)}, Assumption~\ref{Ass: 2_3} guarantees that the mapping
$S: H^{-1}(\Omega) \to H^1_0(\Omega)$ defined by
\begin{equation*}
	b(S\psi,v) = \langle \psi,v \rangle
\end{equation*}
for all $\psi \in H^{-1}(\Omega)$ and $v \in H^1_0(\Omega)$ is a well-defined continuous linear operator
where $\langle\cdot,\cdot\rangle$ denotes the $H^{-1}(\Omega)$, $H_0^1(\Omega)$-pairing.

Let  $B_{L^2} = \{\phi \in L^2(\Omega) \; | \; \|{n^{-1}}\phi\|_{n} \le 1\}$
denote the closed unit ball in $L^2(\Omega)$.
Since the injection $L^2(\Omega) \hookrightarrow H^{-1}(\Omega)$ is compact, we conclude
that $K \assign S(B_{L^2})$ is a compact set in $H^1_0(\Omega)$.  
Note that for $\phi \in L^2(\Omega)$ we have
$S\phi = z_{\{n^{-1} \phi\}}$ (with the notation introduced above).
Hence, 
\begin{equation}
\label{beta_K}
	\max_{v \in K} \min_{v_h \in V^h} \|v-v_h\|_{1,k}
	= \max_{\substack{\phi \in L^2(\Omega) \\[0.2ex] {\|n^{-1}\phi\|_n \le 1}}}\; \min_{v_h \in V^h} \| z_{(n^{-1}\phi)}-v_h\|_{1,k}
	= \max_{\substack{{\psi} \in L^2(\Omega) \\[0.2ex] \|\psi\|_n \le 1}}\; \min_{v_h \in V^h} \| z_\psi-v_h\|_{1,k}
	= \frac{\beta}{k}\,.
\end{equation}
Let us fix $\varepsilon > 0$.  Since $K$ is compact, there exist $N$ functions $\{v^j\}_{j=1}^N \subseteq K$
such that, for each $v \in K$, there exists $j \in \{1,\dots,N\}$ with
\begin{equation*}
	k\|v-v^j\|_{1,k} \le \frac{\varepsilon}{3}.
\end{equation*}
We then invoke the density of $C_{\rm c}^\infty(\Omega)$ {in}
$H^1_0(\Omega)$, which guarantees that, for each {$j\in\{1,\ldots,N\}$},
there exists ${\widetilde{v}^j} \in C_{\rm c}^\infty(\Omega)$ with
\begin{equation*}
	k\|v^j-\widetilde{v}^j\|_{1,k} \le \frac{\varepsilon}{3}.
\end{equation*}
Now we invoke {a} degree-explicit approximation result
(e.g.\ Melenk and Sauter \cite{MeSa:10}) which implies that there
exists a constant $C$ (depending on $\shapereg$) such that
\[
	\min_{v_h \in V^h} \| \widetilde{v}^j - v_h\|_{H^i(\Omega)} 
	\le C \Bigl(\frac{h}{r}\Bigr)^{2-i}|\widetilde{v}^j|_{H^2(\Omega)} 
	\qquad \text{for each} \;\;  i = 0,1, \quad j = 1,\ldots , N.
\]
This estimate follows from \cite[Theorem~B.4]{MeSa:10} (given only on a unit element), by applying a standard scaling argument on each finite element
and using the fact that the resulting global approximation operator maps continuous functions into $V^h$;
see, e.g.\ the steps in \cite[Theorem~5.5]{MeSa:10}.
Hence there exist $v^1_h,\ldots,v^N_h \in V^h$ such that
\begin{align}
	k \|\widetilde{v}^j-v^j_h\|_{1,k}
	&\le k\Bigl(\amax\|\widetilde{v}^j-v_h^j\|_{H^1(\Omega)}+k^2\|\widetilde{v}^j-v_h^j\|_{L^2(\Omega)}^2\Bigr)^{1/2}
	\nonumber\\[1ex]
	&\le C\frac{kh}{r}\biggl(\amax+\Bigl(\frac{kh}{r}\Bigr)^2\biggr)^{1/2}|\widetilde{v}^j|_{H^2(\Omega)}
	\le	C\frac{kh}{r}\biggl(\amax+\Bigl(\frac{kh}{r}\Bigr)^2\biggr)^{1/2}M(\varepsilon)
\label{est_interm_ml01}
\end{align}
for $h/r$ small enough, where
\(
	M(\varepsilon) \assign \max_{1 \le i \le N} |\widetilde{v}^i|_{H^2(\Omega)}.
\)
Putting the pieces together and using the triangle inequality we obtain
\begin{equation*}
	k \min_{v_h \in V^h} \|v-v_h\|_{1,k}
	\le \frac{2\varepsilon}{3} + C\frac{kh}{r}\biggl(\amax+\Bigl(\frac{kh}{r}\Bigr)^2\biggr)^{1/2}M(\varepsilon)
\end{equation*}
for all $v \in K$.
Let us write $\beta(h,r)\assign\beta$ to indicate the dependence on $h$ and $r$.  
We can assume, without loss of generality, that $C \, \ge1$.
It follows from \eqref{beta_K} and \eqref{est_interm_ml01} that
\[
	\frac{\beta(h,r)}{C\,} \le \frac{2\varepsilon}{3} + C\frac{kh}{r}\biggl(\amax+\Bigl(\frac{kh}{r}\Bigr)^2\biggr)^{1/2}M(\varepsilon).
\]
Clearly, there exists $\delta>0$ such that $\frac{\beta(h,r)}{C\,}\le\varepsilon$ if $h/r\in(0,\delta)$.
Now choose a decreasing sequence $\varepsilon_\ell>0$ such that $\lim_{\ell\to\infty}\varepsilon_\ell=0$.
Then there exist $\delta_\ell>0$ such that $(\delta_\ell)_{\ell=1}^\infty$ is strictly decreasing, $\lim_{\ell\to\infty}\delta_\ell=0$ and
\[
	\frac{\beta(h,r)}{C\,} \le \varepsilon_\ell \qquad\text{for} \;\; h,r>0 \quad \text{with}\;\;\; \frac{h}{r}<\delta_\ell.
\]
Defining $\theta(\delta_\ell)\assign\varepsilon_\ell$ and interpolating linearly we obtain  $\theta$
satisfying  \eqref{eq:estimate_gamma}.
\end{proof}

\noindent
Before proceeding, we  recall  Friedrichs' inequality (e.g.\ \cite[Theorem 13.19]{Leoni:2017:FCS}).

\begin{lemma}[Friedrichs' inequality and a consequence]\label{lemma: 2.5}
Let $\Omega' \subseteq \mathbb{R}^d$ be an open set that lies between two parallel hyperplanes, separated by a distance $L$.  
Then, for all $u \in H^1_0(\Omega')$, 
\begin{equation}
\label{eq:friedrichs}
	\|u \|_{\Omega'} \le \frac{L}{\sqrt{2}\,} \|\nabla u\|_{\Omega'}. 
\end{equation}
Combining \eqref{eq:friedrichs} with Assumption~\ref{ass: 2_1}\,(ii) we obtain that, for any subdomain $\Omega' \subseteq \Omega$ with diameter $H$, 
the following estimate holds:
\(
	\|u\|_{\Omega'} \le \frac{H}{\sqrt{2}\,} \|\nabla u\|_{\Omega'} \le \frac{H}{\sqrt{2}\,} \|u\|_{1,k,\Omega'} 
\)
for all $u \in H^1_0(\Omega')$.
\end{lemma}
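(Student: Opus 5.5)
The first inequality \eqref{eq:friedrichs} is the classical Friedrichs inequality, and I will cite it from \cite[Theorem 13.19]{Leoni:2017:FCS}. Should a self-contained argument be wanted, I would reduce to $u\in C_c^\infty(\Omega')$ by density. I would rotate coordinates so that the two hyperplanes are $\{x_1=0\}$ and $\{x_1=L\}$, and extend $u$ by zero.

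For each fixed $x'=(x_2,\dots,x_d)$, the function $t\mapsto u(t,x')$ vanishes at $t=0$ and at $t=L$. I would then pick the better of the two endpoints. For $t\le L/2$ write $u(t,x')=\int_0^t\partial_1 u$, and for $t\ge L/2$ write $u(t,x')=-\int_t^L\partial_1 u$. Cauchy--Schwarz on the relevant half-interval gives $|u(t,x')|^2\le t\int_0^{L/2}|\partial_1u|^2$, respectively $(L-t)\int_{L/2}^L|\partial_1u|^2$. Integrating in $t$ over each half gives a factor $\int_0^{L/2}t\,\rd t=L^2/8$, so that $\int_0^L|u|^2\,\rd t\le \frac{L^2}{8}\int_0^L|\partial_1u|^2\,\rd t$. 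Integrating over $x'$ then yields a constant at least as good as $L/\sqrt2$. Since $|\partial_1 u|\le|\nabla u|$, this gives \eqref{eq:friedrichs}.

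For the consequence, let $\Omega'\subseteq\Omega$ have diameter $H$. Any set of diameter $H$ lies between two parallel hyperplanes at distance $H$: take any unit direction $e$ and the hyperplanes $\{x\cdot e=\inf_{\Omega'} x\cdot e\}$ and $\{x\cdot e=\sup_{\Omega'}x\cdot e\}$, whose separation is at most $H$. Applying \eqref{eq:friedrichs} with $L= H$ gives the first bound.

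For the second bound I use Assumption~\ref{ass: 2_1}\,(ii), namely $\amin=1$, so that $\|\nabla u\|_{\Omega'}^2\le a_{\Omega'}(u,u)$. Adding the nonnegative term $k^2\|u\|_{n,\Omega'}^2$ gives $\|\nabla u\|_{\Omega'}\le\|u\|_{1,k,\Omega'}$.

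There is no real obstacle here. The only care needed is the geometric remark that a set of diameter $H$ lies in a slab of width $H$, and the handling of the constant $\sqrt2$, which requires splitting the interval at its midpoint as above rather than integrating from a single endpoint.
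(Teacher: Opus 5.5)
Your proof is correct and takes the same route as the paper. The paper only cites \cite[Theorem 13.19]{Leoni:2017:FCS} for \eqref{eq:friedrichs} and states that the consequence follows from the normalisation $\amin = 1$. Your observation that a set of diameter $H$ lies in a slab of width at most $H$, and your use of $\|\nabla u\|_{\Omega'} \le |u|_{a,\Omega'} \le \|u\|_{1,k,\Omega'}$, are exactly the details the paper leaves implicit.

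One small correction to your closing remark: the midpoint split is not required to get $L/\sqrt{2}$. Integrating from a single endpoint gives $|u(t,x')|^2 \le t\int_0^L |\partial_1 u|^2\,\rd t$, and $\int_0^L t\,\rd t = L^2/2$ then yields exactly the constant $L/\sqrt{2}$. Your split gives the sharper constant $L/(2\sqrt{2})$.
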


\subsection{Abstract spectral theory for indefinite generalised  eigenvalue problems}
\label{subsec:abstract}

We shall build our coarse space by solving 
local generalised eigenvalue problems,  which may be indefinite (see \eqref{eq: 5_12}).  
So in this section we derive, in  an abstract setting, the required properties of such problems.
First we {collect} some properties of a positive semi-definite bilinear form
(such as appears on the right-hand side of \eqref{eq: 5_12}).

\begin{lemma} \label{cpd}
Suppose $\wV$ is a real vector space with $\dim \wV = \tilde n$.   
Let $\txtc$ be a symmetric positive semi-definite bilinear form on $\wV$, 
let $\ker \txtc \assign \{ v \in \wV: \txtc(v,w) = 0 \ \text{for all} \ w \in \wV\}$ 
and suppose that $\dim \ker \txtc = s < \tilde n$. 
Further, let $\whV\subseteq \wV$ be an $(\tilde n-s)$-dimensional subspace such that $\whV\cap\ker\txtc=\{0\}$.
Then $\wV=\whV\oplus\ker\txtc$, where $\oplus$ denotes a direct sum, and $\txtc$ is positive definite on $\whV$.
\end{lemma}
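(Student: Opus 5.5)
The plan is to use only finite-dimensional linear algebra together with the Cauchy--Schwarz inequality for semi-definite forms. There are two claims: the direct sum decomposition and the positive definiteness of $\txtc$ on $\whV$.

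For the direct sum, first note that $\ker\txtc$ is a linear subspace of $\wV$, since the defining condition $\txtc(v,w)=0$ for all $w$ is linear in $v$. By hypothesis $\whV\cap\ker\txtc=\{0\}$, so the sum $\whV+\ker\txtc$ is direct. Its dimension is therefore $\dim\whV+\dim\ker\txtc=(\tilde n-s)+s=\tilde n=\dim\wV$. A subspace of full dimension coincides with the whole space, so $\wV=\whV\oplus\ker\txtc$.

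For positive definiteness, first check that $\txtc$ restricted to $\whV$ is symmetric and positive semi-definite; this is inherited from $\wV$. It remains to show that $\txtc(v,v)=0$ with $v\in\whV$ forces $v=0$. The key step is the observation that, for a positive semi-definite symmetric form, the null vectors coincide with the kernel: $\txtc(v,v)=0$ implies $v\in\ker\txtc$. To see this, apply the Cauchy--Schwarz inequality for semi-definite forms (proved via the discriminant of $t\mapsto\txtc(v+tw,v+tw)\ge0$):
\[
|\txtc(v,w)|^2\le\txtc(v,v)\,\txtc(w,w)=0 \qquad\text{for all } w\in\wV .
\]
Hence $\txtc(v,w)=0$ for all $w\in\wV$, so $v\in\ker\txtc\cap\whV=\{0\}$.

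The only point needing care is this last step. The hypothesis concerns the kernel as defined in the lemma, not the isotropic cone $\{v:\txtc(v,v)=0\}$, and it is semi-definiteness that makes the two sets coincide. Without semi-definiteness the inclusion would fail, so the Cauchy--Schwarz argument is where the hypothesis on $\txtc$ is genuinely used.
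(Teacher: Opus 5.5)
Your proof is correct and follows the same route as the paper: the direct sum comes from the dimension count together with $\whV\cap\ker\txtc=\{0\}$, and positive definiteness comes from showing $\txtc(v,v)>0$ for nonzero $v\in\whV$. The paper only asserts this last inequality ``by the assumption on $\whV$''. Your Cauchy--Schwarz argument, which shows that a vector with $\txtc(v,v)=0$ must lie in $\ker\txtc$ when $\txtc$ is semi-definite, is exactly the step it leaves implicit.
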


\begin{proof}
The relation $\wV=\whV\oplus\ker\txtc$ is clear since $\dim\whV+\dim\ker\txtc=\tilde n$ and $\whV\cap\ker\txtc=\{0\}$.
The form $\txtc|_{\whV\times\whV}$ is symmetric and satisfies $\txtc(v,v)>0$ for $v\in\whV\setminus\{0\}$ by the assumption on $\whV$.
Hence $\txtc|_{\whV\times\whV}$ is positive definite.
\end{proof}

\begin{theorem}[Abstract theorem for indefinite eigenvalue problems]
\label{lem_ind_ev}
Let $\wV$, $\txtc$ and $s$ be 
as in Lemma~\ref{cpd} and suppose also that $\txtb$ is a symmetric \textup{(}possibly indefinite\textup{)} bilinear form on $\wV$. 
Consider the generalised eigenvalue problem: find $\lambda \in \mathbb{R}$ and $p \in \wV \backslash \{0\}$ such that 
\begin{equation} \label{Ivan1}
  \txtb(p,v)  = \lambda \txtc(p,v) \qquad \text{for all} \quad v \in \wV.
\end{equation}
If $s \ge 1$, suppose also that $\txtb$ is positive definite on $\ker\txtc$. 
Then  $\wV$ has a basis $\{p_j: j = 1, \ldots,\tilde n\}$,  in which  $\{p_j: j = 1, \ldots, \tilde n-s\}$ are eigenvectors of \eqref{Ivan1}  which
can be chosen to be orthonormal with respect to $\txtc$ and the corresponding finite eigenvalues can be ordered
\begin{equation} \label{Ivan11}
	-\infty < \lambda_1 \leq  \lambda_2 \leq   \ldots \leq \lambda_{\tilde n-s} < \infty. 
\end{equation} 
The remaining  basis vectors $\{p_j: j = \tilde n-s+1, \ldots, \tilde n\}$  form a basis of $\ker\txtc$. 
In this sense we say that they correspond to infinite  eigenvalues of \eqref{Ivan1}.
Moreover, 
\begin{alignat}{2}
	\txtb(p_j,p_{j'}) &= \lambda_j \delta_{j,j'}, \quad && \ j \in \{1, \ldots, \tilde n-s\}, \ j'\in\{1,\ldots,\tilde n\};
	\label{orth_b}
	\\
	\txtc(p_j,p_{j'}) &= \delta_{j,j'}, \quad && \ j\in\{1,\ldots,\tilde n-s\},\ j'\in\{1,\ldots,\tilde n\}.
	\label{orth_c}
\end{alignat}
If $s=0$ (trivial case) the same conclusion holds but the positive definiteness condition on $\txtb$ is not required, 
and all eigenvalues are finite.
\end{theorem}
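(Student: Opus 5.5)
The plan is to choose a good complement of $\ker\txtc$ and reduce everything to a standard symmetric eigenproblem on that complement. Since $\txtb$ is positive definite on $\ker\txtc$ (when $s\ge1$), I would take
\[
\whV \assign \{ v\in\wV : \txtb(v,w)=0 \ \text{for all}\ w\in\ker\txtc\},
\]
the $\txtb$-orthogonal complement of $\ker\txtc$. The first step is to check that $\wV=\whV\oplus\ker\txtc$.

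Let $K=\ker\txtc$ and $\pi$ be the map $\wV\to K^*$, $v\mapsto \txtb(v,\cdot)|_K$. Its kernel is $\whV$, so $\dim\whV\ge \tilde n-s$. If $v\in\whV\cap K$, then $\txtb(v,v)=0$, and positive definiteness of $\txtb$ on $K$ forces $v=0$. Hence $\dim\whV=\tilde n-s$ and $\whV\cap K=\{0\}$. Lemma~\ref{cpd} then gives the direct sum, and also that $\txtc$ is an inner product on $\whV$.

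Next I apply the spectral theorem on the Euclidean space $(\whV,\txtc)$. The symmetric form $\txtb|_{\whV\times\whV}$ defines a $\txtc$-self-adjoint operator $T$ on $\whV$ via $\txtc(Tp,v)=\txtb(p,v)$ for $p,v\in\whV$. This yields a $\txtc$-orthonormal eigenbasis $p_1,\dots,p_{\tilde n-s}$ with real eigenvalues, which I order as in \eqref{Ivan11}. I then complete it with any basis $p_{\tilde n-s+1},\dots,p_{\tilde n}$ of $K$.

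Then I would verify that each $p_j$ with $j\le\tilde n-s$ solves \eqref{Ivan1} for every $v\in\wV$, not only for $v\in\whV$. This is the step where the choice of $\whV$ matters. Write $v=\hat v+v_0$ with $\hat v\in\whV$ and $v_0\in K$. Then $\txtb(p_j,v_0)=0$ by the definition of $\whV$, and $\txtc(p_j,v_0)=0$ because $v_0\in\ker\txtc$. So
\[
\txtb(p_j,v)=\txtb(p_j,\hat v)=\lambda_j\txtc(p_j,\hat v)=\lambda_j\txtc(p_j,v).
\]

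The relations \eqref{orth_b} and \eqref{orth_c} follow the same way. For $j'\le\tilde n-s$ they come from orthonormality together with the eigen-equation. For $j'>\tilde n-s$ both sides vanish, by $\txtb$-orthogonality of $\whV$ to $K$ and because $p_{j'}\in\ker\txtc$.

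In the case $s=0$ we have $\whV=\wV$, $\txtc$ is an inner product, and the argument reduces to the spectral theorem alone, with no condition on $\txtb$.

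The main obstacle is the choice of complement. An arbitrary complement $\whV$ from Lemma~\ref{cpd} would give eigenvectors of the restricted problem that fail \eqref{Ivan1} when tested against $\ker\txtc$. The $\txtb$-orthogonal complement is what fixes this, and it is exactly where the positive definiteness of $\txtb$ on $\ker\txtc$ is used.
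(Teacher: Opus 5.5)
Your proof is correct, and it takes a more direct route than the paper. The paper allows an \emph{arbitrary} complement $\whV$ of $\ker\txtc$. It introduces the correction operator $\scS:\whV\to\ker\txtc$ defined by $\txtb(\scS\whw,v)=-\txtb(\whw,v)$ for all $v\in\ker\txtc$; this is where positive definiteness of $\txtb$ on $\ker\txtc$ is used. It then checks that $\scS$ is $\txtb$-self-adjoint, so that the Schur-complement form $\widetilde{\txtb}(\whp,\whv)=\txtb((I+\scS)\whp,\whv)$ is symmetric. Finally it solves the eigenproblem for $\widetilde{\txtb}$ against $\txtc$ on $\whV$ and sets $p_j=(I+\scS)\whp_j$. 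The two approaches produce the same eigenvectors. $I+\scS$ is injective on $\whV$ and maps into your $\txtb$-orthogonal complement of $\ker\txtc$, which you showed has dimension $\tilde n-s$, so it is an isomorphism onto your space. It also satisfies $\widetilde{\txtb}(\whp,\whv)=\txtb((I+\scS)\whp,(I+\scS)\whv)$ and $\txtc(\whp,\whv)=\txtc((I+\scS)\whp,(I+\scS)\whv)$. Your formulation is the intrinsic one and avoids both $\scS$ and the symmetry check. The paper's formulation keeps the complement free, and this freedom is used in Theorem~\ref{est_below}. There the choice $\whV_i=V_i$ (interior nodal functions) makes Lemma~\ref{Xi_below} available for a lower bound on $c_i$, and $\lambda_1^i$ is written as a Rayleigh quotient over $V_i$ with $\scS_i$ estimated explicitly. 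Starting from your complement, you would essentially have to reintroduce that parametrisation to obtain the eigenvalue bound.
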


\begin{proof}
Suppose $s \ge 1$ and $\txtb$ is positive definite on $\ker \txtc$. 
Let us choose $\whV\subseteq\wV$ such that $\wV = \whV\oplus \ker \txtc$ (as in Lemma~\ref{cpd});
then every $p \in \wV$ has a unique decomposition $p = \whp + p_{\txtc} $ with $\whp\in \whV$ and  $p_{\txtc}\in \ker \txtc$. 
Thus the GEVP \eqref{Ivan1} is equivalent to seeking $\lambda \in \mathbb{R}$, $\whp \in \whV$ and  $p_{\txtc} \in \ker \txtc$ such
that $\whp + p_{\txtc} \ne 0$ and  
\begin{align} \label{Ivan1a} 
	\txtb(\whp + p_{\txtc}, v ) = \lambda \txtc (\whp, v) \qquad \text{for all} \;\; v \in \wV.
\end{align}
This, in turn, is equivalent to solving  the coupled system
\begin{alignat}{2}
	\txtb(\whp, v_\txtc) + \txtb(p_{\txtc}, v_{\txtc})    &= 0 \qquad 
	&& \text{for all} \;\; v_{\txtc} \in \ker \txtc, 
	\label{Ivan1b}\\
	\txtb(\whp,\whv) + \txtb(p_{\txtc},\whv) &= \lambda \txtc (\whp,\whv) \qquad\quad 
	&& \text{for all} \;\; \whv \in \whV,
	\label{Ivan1c}
\end{alignat}
for $\lambda \in \mathbb{R}$,   $\whp\in \whV$ and $p_{\txtc} \in \ker \txtc$ such
that $ \whp + p_{\txtc} \ne 0$. 
    
Now, by assumption on  $\txtb$, for each $\whw \in \whV$, there exists a unique $w_\txtc \in \ker \txtc$ such that
\begin{align}\label{Ivan1c1} 
	\txtb(w_\txtc, v_\txtc) = - \txtb(\whw , v_\txtc) \qquad \text{for all} \;\; v_\txtc \in \ker \txtc. 
\end{align} 
Denoting the solution to \eqref{Ivan1c1} as $w_\txtc = \scS \whw$, this defines a  linear map
$\scS: \whV\rightarrow \ker \txtc$ with the property
\begin{align}\label{Ivan1b1}
	\txtb(\scS \whw, v_\txtc) = - \txtb(\whw, v_\txtc) \qquad \text{for all} \;\;
	\whw \in \whV \;\; \text{and} \;\; v_\txtc \in \ker \txtc,
\end{align}
or, equivalently,  
\begin{align} \label{Ivan1d} 
	\txtb\bigl((I + \scS) \whw, v_\txtc\bigr) = 0 \qquad \text{for all} \;\; \whw \in \whV \;\; \text{and} \;\; v_\txtc \in \ker \txtc.
\end{align} 

We now  consider the GEVP: find $\lambda \in \mathbb{C}$ and   $\whp \in \whV\backslash \{0\}$ such that
\begin{align} \label{Ivan1e} 
	\widetilde{\txtb} (\whp, \whv) \assign \txtb\bigl((I + \scS) \whp, \whv\bigr) = \lambda \txtc(\whp, \whv) \qquad 
	\text{for all} \;\; \whv \in \whV.  
\end{align}
(This is the Schur complement of \eqref{Ivan1b}, \eqref{Ivan1c}.)
    
Note that $\scS$ is self-adjoint with respect to $\txtb$. This is because, using the symmetry of $\txtb$ (at the first and third steps) 
and the definition of $\scS$ (at the second and fourth steps),  
\[
	\txtb(\scS \whp, \whv) = \txtb(\whv, \scS \whp) = -\txtb(\scS \whv, \scS \whp) 
	= -\txtb( \scS \whp,\scS \whv) =  \txtb(\whp, \scS \whv).
\] 
Thus the bilinear form $\widetilde{\txtb}$ on the left-hand side of \eqref{Ivan1e} is symmetric on $\whV$.
Now $\txtc$ is symmetric and (by Lemma~\ref{cpd}) positive definite on $\whV$;
so the GEVP \eqref{Ivan1e} has $\tilde n-s$ eigenpairs,
which we denote  $(\lambda_j, \whp_{j})$, $j = 1, \ldots , \tilde n-s$, with real finite eigenvalues $\lambda_j$,
which can be ordered as in \eqref{Ivan11}.  The $\whp_{j} \in \whV$ can be chosen orthonormal with respect to $\txtc$ 
and form a basis of $\whV$.  
Now we define 
\begin{align} \label{eigdefj} 
	p_j = (I + \scS)\whp_{j}, \qquad j = 1, \ldots, \tilde n-s.
\end{align} 
By \eqref{eigdefj}, \eqref{Ivan1e} and since $\scS \whp_{j} \in \ker \txtc$, we have 
\[
	\txtb(p_j, \whv) = {\widetilde{\txtb}(\whp_{j},\whv)}
	= \lambda_j \txtc(\whp_{j}, \whv) = \lambda_j \txtc(p_{j}, \whv)
	\qquad \text{for all} \;\; \whv \in \whV.
\]
Also, by \eqref{eigdefj} and \eqref{Ivan1d} we have $\txtb(p_j, v_{\txtc}) = 0$ for all $v_{\txtc} \in \ker \txtc$, and so
(see \eqref{Ivan1b} and  \eqref{Ivan1c}), 
$(\lambda_j, p_j),\,  j = 1, \ldots ,\tilde n-s$
are eigenpairs of the original GEVP \eqref{Ivan1}.
Now, choosing an arbitrary basis, $\{p_{\tilde n-s+1},\ldots,p_{\tilde n}\}$, of {$\ker\txtc$} we obtain a basis of $\wV$. 
For $j \in \{1, \ldots , \tilde n-s\}$, relation \eqref{orth_c} follows for $ j' \in \{ 1, \ldots, \tilde n-s\}$ 
because  $\scS \whp_{j}, \scS \whp_{j'} \in \ker \txtc$, and so $\txtc(p_j,p_{j'}) = \txtc(\whp_{j},\whp_{j'}) = \delta_{j,j'}$ 
since the $\{\whp_{j}: j = 1, \ldots, \tilde n-s\}$ were chosen orthonormal with respect to $\txtc$.   
For $j' \in  \{\tilde n-s+1, \ldots, \tilde n\}$,  \eqref{orth_c} is trivial because then $p_{j'} \in \ker \txtc$.

Relation \eqref{orth_b} then follows directly  from \eqref{orth_c} because,  for $j \in \{1, \ldots, \tilde n-s\}$ 
and any $j' {\in \{1,\dots,\tilde n\}}$, $\txtb(p_j, p_{j'}) = \lambda_j\txtc(p_j, p_{j'})$. 
This completes the proof when $s\ge1$, while the case $s = 0$ is trivial since \eqref{Ivan1} can then be reduced to a standard eigenvalue problem.     
\end{proof}

\begin{remark}
Note that some statements in the previous proof can be made more general:
\begin{enumerate}
\item 
	The requirement that $\txtb$ is positive definite on $\ker \txtc$ in part \textup{(i)} of Theorem~\ref{lem_ind_ev} 
	could be weakened to simply requiring that problem \eqref{Ivan1c1} has a unique solution 
	for all {$\whw \in \whV$, where $\whV$ is as in Lemma~\ref{cpd}}. 
	However, since we shall need the positive definiteness in Proposition~\ref{lem_abs_proj} anyway, we avoid this extra generality.
\item
	In general, the eigenvectors $p_j,\, j = 1, \ldots \tilde n-s$ defined via \eqref{eigdefj} are not elements of $\whV$ 
	because $\scS$ maps into $\ker \txtc$.
\end{enumerate} 
\end{remark} 

\noindent
The following corollary then follows directly from Theorem \ref{lem_ind_ev}.

\begin{corollary}\label{lem_v_abstract2}
Under the conditions of Theorem~\ref{lem_ind_ev},  every $v \in V$ can be expressed as
\begin{equation}\label{Ivan5} 
	v =  v_0 +  \sum_{j=1}^{\tilde n-s} \txtc(v,p_j)p_j    \quad \text{with} \quad v_0 \in \ker \txtc,
\end{equation}
If $\ker\txtc = \{0\}$ then $s=0$ and $v_0 = 0$ .
\end{corollary}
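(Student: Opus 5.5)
The proof will expand $v$ in the basis $\{p_j\}_{j=1}^{\tilde n}$ from Theorem~\ref{lem_ind_ev} and read off the coefficients of the finite-eigenvalue part using the $\txtc$-orthonormality relation \eqref{orth_c}. (Here $V$ in the statement is understood as $\wV$.)

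First, since $\{p_j: j=1,\ldots,\tilde n\}$ is a basis of $\wV$, we write
\[
v=\sum_{j=1}^{\tilde n}\alpha_j p_j
\]
with unique real coefficients $\alpha_j$. We then set $v_0\assign\sum_{j=\tilde n-s+1}^{\tilde n}\alpha_j p_j$. Because the last $s$ basis vectors form a basis of $\ker\txtc$, this gives $v_0\in\ker\txtc$.

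Next, to identify $\alpha_{j'}$ for $j'\in\{1,\ldots,\tilde n-s\}$, we apply $\txtc(\cdot,p_{j'})$ to the expansion. Since $\txtc$ is symmetric, \eqref{orth_c} gives $\txtc(p_j,p_{j'})=\delta_{j,j'}$ for all $j\in\{1,\ldots,\tilde n\}$. For $j\le\tilde n-s$ this is \eqref{orth_c} directly. For $j>\tilde n-s$ we use symmetry and the fact that $p_j\in\ker\txtc$. Hence $\txtc(v,p_{j'})=\alpha_{j'}$, and substituting these coefficients back into the expansion yields \eqref{Ivan5}.

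Finally, if $\ker\txtc=\{0\}$, then $s=0$, so the sum defining $v_0$ is empty and $v_0=0$. This is the case $s=0$ of Theorem~\ref{lem_ind_ev}, in which all $\tilde n$ eigenvectors are $\txtc$-orthonormal.

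The only point requiring care is that \eqref{orth_c} is stated with the first index restricted to $j\le\tilde n-s$. The proof therefore needs the symmetry of $\txtc$ to cover the terms with $j>\tilde n-s$; beyond that, the argument is routine.
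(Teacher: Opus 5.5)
Your proof is correct and is the argument the paper intends when it says the corollary ``follows directly from Theorem~\ref{lem_ind_ev}'': expand $v$ in the basis $\{p_j\}_{j=1}^{\tilde n}$ and identify the first $\tilde n-s$ coefficients via \eqref{orth_c}. One minor point: for $j>\tilde n-s$, the identity $\txtc(p_j,p_{j'})=0$ already follows from $p_j\in\ker\txtc$ and the definition of $\ker\txtc$, so symmetry is not actually needed there (though using it is harmless).
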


We now define a projection operator mapping  ${\wV}$ onto the span of the first $m$
eigenvectors of \eqref{Ivan1}, where $m$ is chosen so that all eigenvectors corresponding to negative eigenvalues are included
(recall the ordering \eqref{Ivan11}).
This  operator is  crucial in the analysis of the proposed preconditioner.

\begin{proposition}[Projection onto the $m$-dimensional eigenspace]\label{lem_abs_proj}
Under the conditions of Theorem~\ref{lem_ind_ev}, 
suppose that there exists $m \in \{ 1, \ldots, \tilde n-s-1\}$
such that
$\lambda_{m+1}>0${,} 
and  define the projector
\begin{equation}\label{Ivan6}
	\Pi v = \sum_{j=1}^m \txtc(v,p_j) p_j \qquad \text{for all} \;\; v \in \wV.
\end{equation}
Then
\begin{equation}\label{Ivan7}
	\txtc(v-\Pi v,v-\Pi v) \le \frac{1}{\lambda_{m+1}} \txtb(v-\Pi v,v-\Pi v).
\end{equation}
\end{proposition}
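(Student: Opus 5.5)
Write $w \assign v-\Pi v$ and expand it in the basis of Theorem~\ref{lem_ind_ev}. By Corollary~\ref{lem_v_abstract2},
\[
	w = v_0 + \sum_{j=m+1}^{\tilde n-s} \txtc(v,p_j)\,p_j, \qquad v_0\in\ker\txtc .
\]
Set $\alpha_j \assign \txtc(v,p_j)$ and $w_1\assign\sum_{j=m+1}^{\tilde n-s}\alpha_j p_j$, so that $w=w_1+v_0$. The plan is to compute both sides of \eqref{Ivan7} in these coordinates. On the left, $\txtc(w,w)=\txtc(w_1,w_1)=\sum_{j>m}\alpha_j^2$, because $v_0\in\ker\txtc$ and the $p_j$ are $\txtc$-orthonormal by \eqref{orth_c}.

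For the right-hand side, expand
\[
	\txtb(w,w)=\txtb(w_1,w_1)+2\,\txtb(w_1,v_0)+\txtb(v_0,v_0).
\]
By \eqref{orth_b}, $\txtb(p_j,p_{j'})=\lambda_j\delta_{j,j'}$ for $j\le\tilde n-s$, so $\txtb(w_1,w_1)=\sum_{j>m}\lambda_j\alpha_j^2$. The cross term vanishes: each $p_j$ with $j\le \tilde n-s$ satisfies $\txtb(p_j,v_0)=\lambda_j\txtc(p_j,v_0)=0$ since $v_0\in\ker\txtc$. This is also \eqref{orth_b} with $j'$ ranging over the kernel basis vectors. Finally, $\txtb(v_0,v_0)\ge0$ because $\txtb$ is positive definite on $\ker\txtc$; if $s=0$ then $v_0=0$ and this term is absent.

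Combining these, and using the ordering \eqref{Ivan11} with $\lambda_j\ge\lambda_{m+1}>0$ for $j>m$,
\[
	\txtb(w,w)\ \ge\ \sum_{j>m}\lambda_j\alpha_j^2\ \ge\ \lambda_{m+1}\sum_{j>m}\alpha_j^2=\lambda_{m+1}\,\txtc(w,w).
\]
Dividing by $\lambda_{m+1}>0$ gives \eqref{Ivan7}.

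The only step needing care is the $\txtb$-orthogonality between the eigenvectors and $\ker\txtc$, which kills the cross term. It follows from \eqref{orth_b} with $j'$ indexing the kernel basis, or equivalently from the eigen-equation \eqref{Ivan1} tested with $v_0$. This is exactly where positive definiteness of $\txtb$ on $\ker\txtc$ is used, to control the remaining term $\txtb(v_0,v_0)$. Note that negative eigenvalues $\lambda_j$ with $j\le m$ never enter the estimate, because $\Pi$ removes those components.
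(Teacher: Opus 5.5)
Your proof is correct and follows the paper's argument essentially line by line: expand $v-\Pi v = v_0+\sum_{j>m}\alpha_j p_j$ via Corollary~\ref{lem_v_abstract2}, remove the cross terms using $\txtb(p_j,v_0)=\lambda_j\txtc(p_j,v_0)=0$, drop $\txtb(v_0,v_0)\ge 0$, and compare with $\txtc(v-\Pi v,v-\Pi v)=\sum_{j>m}\alpha_j^2$. One small wording point: positive definiteness of $\txtb$ on $\ker\txtc$ is not what kills the cross term. It is used only to get $\txtb(v_0,v_0)\ge0$ (and, upstream, to construct the eigenbasis).
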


When applying this proposition we will show  that such an $m$ exists, under a modest assumption.
\begin{proof}\, 
Let $v \in {\wV}$ be expressed as in \eqref{Ivan5} and set $\alpha_j\assign\txtc(v,p_j)$, $j\in\{1,\ldots,\tilde n-s\}$. 
We use the properties from Corollary \ref{lem_v_abstract2} and Theorem~\ref{lem_ind_ev} to obtain,
\begin{align*}
	\txtb(v-\Pi v,v-\Pi v) &= \txtb\Biggl(v_0 + \sum_{j=m+1}^{\tilde n-s} \alpha_j p_j\, ,\, v_0 +  \sum_{j=m+1}^{\tilde n-s} \alpha_j p_j \Biggr)
	\nonumber\\[1ex]
	&= \txtb(v_0,v_0)   + 2 \sum_{j = m+1}^{\tilde n-s} \alpha_j \txtb( p_j, v_0) + \sum_{j=m+1}^{\tilde n-s}  \lambda_j \alpha_j^2 \nonumber                                 \\
	&= \txtb(v_0,v_0) +  2 \sum_{j = m+1}^{\tilde n-s} \alpha_j \lambda_j \txtc(p_j, v_0) +  \sum_{j=m+1}^{\tilde n-s} \lambda_j \alpha_j^2.
\end{align*}
Then we use the the fact that $v_0 \in \ker \txtc$ and the positive definiteness of $\txtb$ on $\ker \txtc$ to obtain 
\begin{equation} 
\label{Ivan281}
	\txtb(v-\Pi v,v-\Pi v)  = \txtb(v_0,v_0) + \sum_{j=m+1}^{\tilde n-s} \lambda_j \alpha_j^2 \ge \sum_{j=m+1}^{\tilde n-s} \lambda_j \alpha_j^2  
	\ge \lambda_{m+1} \sum_{j=m+1}^{\tilde n-s}  \alpha_j^2.
\end{equation}
Using an almost identical argument and the results from Theorem~\ref{lem_ind_ev} we also obtain
\begin{equation}
	\txtc(v-\Pi v,v-\Pi v) = \sum_{j=m+1}^{\tilde n-s}  \alpha_j^2.
\label{Ivan282}
\end{equation}
{Now \eqref{Ivan7}} follows from \eqref{Ivan281} and \eqref{Ivan282}. 
\end{proof}

This lemma shows that, although the bilinear form $\txtb(\cdot,\cdot)$ may be indefinite on the full space, 
it becomes positive definite on the complement of the subspace spanned by the first $m$ eigenvectors, 
provided all discarded eigenvalues $\lambda_{j}$ (for $j > m$) are strictly positive. 
In the context of coarse space construction, this implies that if we retain all eigenfunctions associated 
with non-positive or small eigenvalues, then the remainder is well-controlled by $\txtb(\cdot,\cdot)$, 
leading to stability estimates essential for robust preconditioning.

\subsection{Domain decomposition}

\begin{notation} \label{not:generalD}
Let $\Omega'$ be any subdomain of  $\Omega$, composed of a union of elements of the mesh $\mathcal{T}_h$.
We introduce the finite element spaces:
\begin{equation*}
	\widetilde{V}_{\Omega'} \assign \bigl\{v|_{\Omega'} : v \in V^h\bigr\} \subseteq H^1(\Omega') 
	\qquad \text{and} \qquad 
	V_{\Omega'} \assign \bigl\{v \in \widetilde{V}_{\Omega'} : v|_{\partial \Omega'} = 0\bigr\} \subseteq H^1_0(\Omega'). 
\end{equation*}
For any $v_{\Omega'} \in V_{\Omega'}$, we let $E_{\Omega'} v_{\Omega'} \in V^h$ denote its zero extension to the whole domain $\Omega$, 
and we  define the operator $R_{\Omega'}:  V^h \rightarrow V_{\Omega'}$  by
\[
  (R_{\Omega'} v , w_{\Omega'})_{\Omega'} \assign (v,E_{\Omega'}w_{\Omega'}) \quad \text{for all} \quad  v \in V^h \quad \text{and} \quad  w_{\Omega'} \in V_{\Omega'}.
\]
\end{notation}

In order to construct the two-level Schwarz preconditioner, we  choose two overlapping covers of $\Omega$.
The first  consists of `local subdomains' $\{\Omega_j^\ell\}_{j=1}^Q$ 
on which  restrictions of \eqref{eq:  2_12} 
will be solved.  
The second consists of `coarse-space subdomains' $\{\Omega_i^c\}_{i=1}^N$ on which generalised eigenvalue problems will be solved
to build  the  coarse space.  All subdomains are assumed to be polygonal (2D) or polyhedral (3D) and have boundaries which are  
resolved by the fine mesh $\mathcal{T}_h$,
and satisfy  
\begin{equation} 
	\Omega = \bigcup_{i=1}^{N} \Omega_i^c, \qquad \Omega = \bigcup_{j=1}^{Q} \Omega_j^\ell.
	\label{covers}
\end{equation}
There is no need for these covers to coincide and in practical algorithms
it may be convenient for them to be different. The `coarse' subdomains should capture the indefiniteness and are not required to be small in theory. 
For each, ${\Omega_i^c}$ and ${\Omega_j^\ell}$, we denote their  diameters by $H_{c,i}$, $H_{\ell,j}$ respectively, 
and we set $H_c \assign \max_{i=1}^N H_{c,i}$ and $H_\ell \assign \max_{j=1}^Q H_{\ell,j}$.
We  define the corresponding finite element spaces 
$V_{\Omega^c_i}, \widetilde{V}_{\Omega^c_i}$ and $V_{\Omega^\ell_j}, \widetilde{V}_{\Omega^\ell_j}$ 
as in Notation \ref{not:generalD}.  We also use abbreviations $E_i^c = E_{\Omega_i^c}$ and $E_j^\ell = E_{\Omega_j^\ell}$.

\begin{assumption}\label{ass:overlap}
  In the theory we assume that the overlap $\delta_\ell$  of the local cover  $\{\Omega_j^\ell\}$ (as defined in,
  e.g.\ \cite[p.~56]{Toselli:2005:DDM}) satisfies $\delta_\ell \geq k^{-1}$.
\end{assumption} 

Standard properties for such overlapping covers are (see, e.g.\ \cite[eq.~(2.10)]{Graham:2020:DDI}):
\begin{equation} 
	\sum_{i=1}^N \Vert v \vert_{\Omega_i^c} \Vert_{1,k,\Omega_i^c}^2 \leq \Lambda_c \Vert v \Vert_{1,k}^2 , 
	\qquad
	\sum_{j=1}^Q \Vert v\vert_{\Omega_j^\ell} \Vert_{1,k,\Omega_j^\ell}^2 \leq \Lambda_\ell \Vert v \Vert_{1,k}^2 
	\qquad \text{for all} \;\; v \in V^h,
	\label{overlap}
\end{equation}
where
\begin{equation}\label{Lambda_l_c}
	\Lambda_\circ \assign \max_{T\in\mathcal{T}_h}\bigl(\#\bigl\{\Omega^\circ_i \mid 1 \le i \le N,\, T \subseteq \Omega^\circ_i\bigr\} \bigr) 
	\qquad \text{for} \;\;  \circ \in \{c,\ell\}.
\end{equation}
In addition (\cite[Lemma 3.6]{Graham:2020:DDI}), for any  $v_j \in H^1_0(\Omega)$ with
$\supp v_j \subseteq \overline{\Omega_j^\ell}$ for each $j = 1, \ldots, Q$, we have the estimate 
\begin{equation}\label{overlap1}
	\bigg\| \sum_{j=1}^Q v_j \bigg\|_{1,k}^2  \le \Lambda_\ell \sum_{j=1}^Q \|v_j|_{\Omega_j^\ell}\|_{1,k,\Omega_j^\ell}^2,
\end{equation}
and, similarly, for  $v_i \in H_0^1(\Omega)$ with $\supp v_i \subseteq \overline{\Omega_i^c}$, for each $i=1,\ldots,N$.
\begin{equation}\label{overlap2}
	\bigg\| \sum_{i=1}^N v_i \bigg\|_{1,k}^2  \le \Lambda_c \sum_{i=1}^N \|v_i|_{\Omega_i^c}\|_{1,k,\Omega_i^c}^2 .
\end{equation}
The one-level additive Schwarz preconditioner can now be given in matrix form as 
\begin{equation}
  \label{one_level}
	\mathbf{M}^{-1}_{AS,1} = \sum_{j=1}^{Q} \mathbf{E}^{\ell}_j (\mathbf{B}^{\ell}_j)^{-1} \mathbf{R}^{\ell}_j, \qquad 
	\text{where } \mathbf{B}^{\ell}_j = \mathbf{R}^{\ell}_j \mathbf{B} \mathbf{E}^{\ell}_j;
\end{equation}
here, $\mathbf{E}^{\ell}_j$ and $\mathbf{R}^{\ell}_j$ denote the matrix representations 
of \(E_j^\ell\) and \(R_j^\ell\), respectively, with respect to the 
basis functions $\lbrace \phi_i \rbrace_{i=1}^{\dim(V^h)}$ of $V^h$. 
 
In order to improve the preconditioner, a global coarse space is added.  
Let $V_0 \subseteq V^h$ be such a coarse space, let $E_0: V_0 \rightarrow V^h$ be the natural embedding, 
and let $R_0$ be the $L^2$ adjoint of $E_0$,
\begin{equation*}
	(R_0 w,v_0) = (w,E_0 v_0) \qquad \text{for all} \ w \in V^h, v_0 \in V_0.
\end{equation*}
The two-level additive Schwarz preconditioner is then 
\begin{equation} \label{eq: 2_23}
	\mathbf{M}^{-1}_{AS,2} = \mathbf{E}_0 \mathbf{B}^{-1}_0 \mathbf{R}_0 + \mathbf{M}^{-1}_{AS,1}, 
	\qquad \text{where } \mathbf{B}_0 \assign \mathbf{R}_0 \mathbf{B} \mathbf{E}_0 
\end{equation}
(with $\mathbf{E}_0$ and $\mathbf{R}_0$ denoting matrix representations of $E_0$ and $R_0$).
The preconditioned version of \eqref{eq: 2_12} is
\begin{equation} \label{eq: 2_24}
	\mathbf{M}^{-1}_{AS,2} \mathbf{B} \mathbf{u} = \mathbf{M}_{AS,2}^{-1} \mathbf{f}.
\end{equation}

For the analysis we need certain projection operators defined as follows.  
For each $j = 1, \ldots, Q$, we define  $T^\ell_j: V^h \rightarrow V_{\Omega^\ell_j}$ and  $T_0: V^h \rightarrow V_0$ by
\begin{align}
	b_{\Omega^\ell_j}(T^\ell_j u, v) = b(u, E^\ell_j v) \qquad &\text{for all} \ {u\in V^h,}\, v \in V_{\Omega^\ell_j}, 
	\label{eq: bT} 
	\\[0.5ex]
	b(T_0 u, v) = b(u, E_0 v) = b(u,v)  \qquad &\text{for all} \;\; u\in V^h \;\; \text{and} \;\;  \ v \in V_0.
	\label{eq: 2_25_zero}
\end{align}
Sufficient conditions for the existence of $T^\ell_j$ and $T_0$ (and hence invertibility of $\mathbf{B}^\ell_j$, $\mathbf{B}_0$) 
are given later (Lemmas~\ref{lemma_3_4} and \ref{lemma_3_5}).
Given the operators $T^\ell_j$ and $T_0$, we define $T: V^h \rightarrow V^h$ by  
\begin{equation}
	T = E_0T_0 + \sum_{j=1}^Q E^\ell_j T^\ell_j. \label{eq: 2_26}
\end{equation}
Then the preconditioned system \eqref{eq:  2_24} is related to $T$ via the following proposition
(cf, e.g.\ \cite[Theorem~5.5]{Graham:2017:DDP}).

\begin{proposition}\label{prop: 2_6}
For any $u,v \in V^h$, with corresponding nodal vectors $\mathbf{u},\mathbf{v} \in \mathbb{R}^{\dim(V^h)}$, 
\begin{equation} \label{eq: 2_27}
	\langle \mathbf{M}_{AS,2}^{-1} \mathbf{B} \mathbf{u}, \mathbf{v} \rangle_{\mathbf{D}_\kappa} = (T u, v)_{1, \kappa},
\end{equation}
where $\langle \cdot , \cdot \rangle_{\mathbf{D}_\kappa}$ is the inner product on $\mathbb{R}^{\dim(V^h)}$ and the matrix $\mathbf{D}_\kappa$
given by
\begin{equation}\label{def:Dk}
    \mathbf{D}_\kappa \assign \mathbf{A} + \kappa^2  \mathbf{S}.
\end{equation}
\end{proposition}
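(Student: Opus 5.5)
The identity in Proposition~\ref{prop: 2_6} is linear in $u$, so I will prove it by showing that the matrix $\mathbf{M}_{AS,2}^{-1}\mathbf{B}$ is exactly the nodal representation of the operator $T$. Once that is established, \eqref{eq: 2_27} follows from the elementary fact that, for $w,v \in V^h$ with nodal vectors $\mathbf{w},\mathbf{v}$,
\[
(w,v)_{1,\kappa} = a(w,v) + \kappa^2 (n w, v) = \mathbf{v}^{T}(\mathbf{A} + \kappa^2\mathbf{S})\mathbf{w} = \langle \mathbf{w},\mathbf{v}\rangle_{\mathbf{D}_\kappa}.
\]
This holds by expanding $w=\sum_i \mathbf{w}_i\phi_i$ and $v = \sum_i \mathbf{v}_i \phi_i$ and using the definitions of $\mathbf{A}$ and $\mathbf{S}$. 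So it suffices to show that the nodal vector of $Tu$ equals $\mathbf{M}_{AS,2}^{-1}\mathbf{B}\mathbf{u}$. Since both $T$ in \eqref{eq: 2_26} and $\mathbf{M}_{AS,2}^{-1}$ in \eqref{eq: 2_23} are sums of the same structure, I will do this term by term.

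\textbf{Local terms.} Fix $j$ and let $\{\phi^{j}_m\}$ be the nodal basis of $V_{\Omega_j^\ell}$. These are restrictions of the global basis functions $\phi_i$ whose nodes are interior to $\Omega_j^\ell$. Then $\mathbf{E}^\ell_j$ is the Boolean extension matrix, and I claim that $\mathbf{R}^\ell_j = (\mathbf{E}^\ell_j)^T$ in the sense used in \eqref{one_level}. This is the one point needing care: $R_{\Omega'}$ is defined as an $L^2$ adjoint, so its honest matrix involves mass matrices. However, it only ever acts on the residual functional $\mathbf{B}\mathbf{u}$, i.e.\ on the vector $(b(u,\phi_i))_i$. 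In that dual/coefficient sense, applying $R$ selects the entries $b(u,E^\ell_j\phi^j_m)$, which is exactly what $(\mathbf{E}^\ell_j)^T$ does. I will state this convention explicitly as the interpretation under which \eqref{one_level} is written.

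With $\mathbf{R}^\ell_j=(\mathbf{E}^\ell_j)^T$, the matrix $\mathbf{B}^\ell_j = (\mathbf{E}^\ell_j)^T\mathbf{B}\mathbf{E}^\ell_j$ has entries $b(E^\ell_j\phi^j_{m'},E^\ell_j\phi^j_m) = b_{\Omega_j^\ell}(\phi^j_{m'},\phi^j_m)$, because the supports lie in $\Omega_j^\ell$. Hence, writing $\mathbf{t}_j$ for the coefficients of $T^\ell_j u$, the definition \eqref{eq: bT} tested with $v=\phi^j_m$ reads $\mathbf{B}^\ell_j \mathbf{t}_j = (\mathbf{E}^\ell_j)^T\mathbf{B}\mathbf{u}$. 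This gives $\mathbf{t}_j = (\mathbf{B}^\ell_j)^{-1}\mathbf{R}^\ell_j\mathbf{B}\mathbf{u}$, and the nodal vector of $E^\ell_jT^\ell_j u$ is $\mathbf{E}^\ell_j\mathbf{t}_j$. Invertibility of $\mathbf{B}^\ell_j$ is equivalent to well-posedness of $T^\ell_j$, which the paper assumes and addresses later.

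\textbf{Coarse term.} The argument is identical: choose any basis of $V_0$, let $\mathbf{E}_0$ hold the nodal coefficients of these basis functions, and take $\mathbf{R}_0=\mathbf{E}_0^T$ in the same dual sense. Then \eqref{eq: 2_25_zero} gives that the nodal vector of $E_0T_0u$ is $\mathbf{E}_0\mathbf{B}_0^{-1}\mathbf{R}_0\mathbf{B}\mathbf{u}$.

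Summing the coarse and local terms gives that the nodal vector of $Tu$ equals $\mathbf{M}^{-1}_{AS,2}\mathbf{B}\mathbf{u}$, and the $\mathbf{D}_\kappa$ identity above then yields \eqref{eq: 2_27}. The only real obstacle is the bookkeeping of $\mathbf{R}$ versus the $L^2$ adjoint $R$. I would handle it by noting that the preconditioner is applied to the load-type vector $\mathbf{B}\mathbf{u}$, so the restriction acts on functionals, where it is the transpose of extension.
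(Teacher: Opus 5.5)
Your proposal is correct and is essentially the standard argument. The paper gives no proof of its own; it defers to Graham--Spence--Vainikko (Theorem~5.5), whose proof is the same computation: show that the nodal vector of $Tu$ equals $\mathbf{M}_{AS,2}^{-1}\mathbf{B}\mathbf{u}$, then identify $(\cdot,\cdot)_{1,\kappa}$ with $\langle\cdot,\cdot\rangle_{\mathbf{D}_\kappa}$.

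Your explicit convention $\mathbf{R}^\ell_j=(\mathbf{E}^\ell_j)^T$, $\mathbf{R}_0=\mathbf{E}_0^T$ is the right reading of \eqref{one_level}, i.e.\ Boolean restriction acting on load vectors. With the literal mass-matrix representation of the $L^2$-adjoint, the term-by-term identity would not reproduce $T^\ell_j$ and $T_0$ for the stiffness matrix $\mathbf{B}$.
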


\subsection[The $H_k$-GenEO coarse space]{The {\boldmath$H_k$}-GenEO coarse space}

\begin{definition}[{\cite[Definition 3.2]{Spillane:2014:ARC}}] \label{Def: 3_2}
Given any $\Omega' \subseteq \Omega$, formed from a union of elements $T \in \mathcal{T}_h$, let 
\begin{align*}
	\ovdof(\Omega') &\assign \bigl\{j \mid 1 \le {j} \le \dim(V^h) \text{ and } \supp(\phi_{j}) \cap \Omega' \ne \emptyset\bigr\},
	\\[0.5ex]
	\dof(\Omega') &\assign \bigl\{j \mid 1 \le j \le \dim(V^h) \text{ and } \supp(\phi_{{j}}) \subseteq \overline{\Omega'}\bigr\}
\end{align*}
denote, respectively,  the  degrees of freedom in the closed domain  $\overline{\Omega'}$ and its interior.
\end{definition}

\begin{notation}\label{notation:sim}
In what follows we will write $A\lesssim B$, to mean $ A \leq C B$ with a constant $C$ independent of the 
\textbf{key parameters} $k,h,\Cinv, \delta_\ell, H_\ell, H_c, \Lambda_c, \Lambda_\ell$, $\amax$, $\nmin$ and $\Cstab$.
Equivalently, we write this as  $B \gtrsim A$. We write $A \sim B$ when $A \lesssim B$ and $B\lesssim A$. 
The hidden constants in these expressions may depend on other constants such as $k_0$ and $\shapereg$, 
but we do not keep explicit track of these.
\end{notation}

\begin{definition}[{Operator POU} on the coarse subdomains] \label{def: 2_7}
For any  {$l\in\dof \Omega$}, let $\mu_l$ denote  the number of subdomains for which $l$ is an internal degree of freedom, i.e.\
\begin{equation}
	\mu_l \assign \#\bigl\{i \mid 1 \le i \le N,\, l \in \dof({\Omega}_i^c)\bigr\}{.}
\end{equation}
Then, for each $i = 1, \ldots, N$, we define the coarse space partition of unity operator, 
$\Xi_i^c: \widetilde{V}_{\Omega^c_i} \rightarrow V_{\Omega^c_i}$  
using  a weighted combination of degrees of freedom, namely, for $v=\sum_{l\in\ovdof(\Omega^c_i)} v_l\phi_l^i\in \widetilde{V}_{\Omega^c_i}$ we define
\begin{equation*}
	\Xi_i^c(v) \assign \sum_{l\in\dof(\Omega^c_i)}\frac{1}{\mu_l }v_l \phi_l^i, \qquad \text{where} \quad
	\phi_l^i \assign \phi_l\vert_{\Omega_i^c}. 
\end{equation*}             
\end{definition}

\noindent 
It is easy to verify that this is a partition of unity, in the sense that 
\begin{equation}\label{partition_of_unity}
	\sum_{i = 1}^N E_i^{c} \Xi_i^{c}({v|_{\Omega_i^c}}) = v \qquad \text{for all}  \;\; v \in V^h. 
\end{equation}

\begin{definition}[Local generalised eigenvalue problem (GEVP)] \label{defH_GenEO}
For each $i\in\{1,\ldots,N\}$, define $b_{\Omega_i^c}$ as in \eqref{eq:2.18} and set
\begin{equation} \label{Ivan14}  
	c_{\Omega_i^c}(w,v) \assign \bigl(\Xi_i^c(w),\Xi_i^c(v)\bigr)_{1,\kappa,\Omega_i^c} 
    \qquad \text{for all} \ w,v \in \widetilde{V}_{\Omega_i^c}.
\end{equation}  
The generalised eigenvalue problem is then to  
find $p^i\in \widetilde{V}_{\Omega_i^c}  \setminus\{0\}$ and $\lambda^i\in\mathbb{R}$, such that
\begin{equation}\label{eq: 5_12} 
	b_{\Omega_i^c}(p^i,v) = \lambda^i \, c_{\Omega_i^c} (p^i, v)  \qquad 
	\text{for all} \ v\in\widetilde{V}_{\Omega_i^c}. 
\end{equation}
\end{definition}

In order to define the $H_k$-GenEO coarse space we need an assumption on the spectra of \eqref{eq: 5_12}.

\begin{assumption} \label{ass:further} 
Let $(p^i_m,\lambda^i_m)$ be eigenpairs for \eqref{eq: 5_12} with $\lambda_1^i, \lambda_2^i, \ldots $ chosen in
non-decreasing order. 
For each  $i= 1,\ldots,N$,  we assume that there is $m_i\geq 1 $ such that $\lambda_{m_i+1}^i>0$ .
(A mild sufficient condition for the existence of such  $m_i$ is given in Theorem~\ref{pos_ev}.)
\end{assumption}

\begin{definition}[$H_\kappa$-GenEO Coarse space] \label{def: 5_1}
Under Assumption \ref{ass:further}, the coarse space, $V_0$, is given by 
\begin{equation}
	V_0 \assign \linspan \bigl\{E_i^c\Xi_i^c(p^i_m): \,  m = 1, \ldots, m_i\;\; \text{and} \;\; i = 1, \ldots, N\bigr\}.
\end{equation}
\end{definition}

\begin{remark}
In contrast to previous works on the GenEO construction (e.g.\ \cite{Spillane:2014:ARC},  \cite{Bootland:2022:OSM} and \cite{Bastian:2021:MSD}) 
(where the forms are always at least semi-definite), the left-hand side of \eqref{eq: 5_12} can here  be indefinite and also  
the right-hand side in \eqref{eq:  5_12}
is based on a $k$-weighted scalar product. 
However, as $k \rightarrow 0$ the GEVP  \eqref{Ivan14} approaches the GEVP solved in the classical GenEO method, e.g.\ \cite{Spillane:2014:ARC}.
\end{remark}

In the proof of our main result we also need the additional estimates for $\Vert \Xi_i^{{c}}(v)\Vert_{\Omega_i^c}$.

\begin{lemma} \label{Xi_below}
The following estimates hold:
\begin{alignat}{2}
	\Vert \Xi_i^{{c}} (v)\Vert_{\Omega_i^c} &\lesssim  \Vert v \Vert_{\Omega_i^c} \qquad 
	&& \text{for all} \;\; v \in \wV_{{\Omega_i^c}},
	\label{Xiest1}
	\\[1ex]
	\Vert \Xi_i^{{c}} (v)\Vert_{\Omega_i^c} &\gtrsim \Lambda_c^{-1} \Vert v \Vert_{\Omega_i^c} \qquad 
	&& \text{for all} \;\;  v \in V_{{\Omega_i^c}}. 
	\label{Xiest2} 
\end{alignat}
Here the hidden constants depend on $\shapereg$ but not on any of the key parameters (see Notation \ref{notation:sim}).
\end{lemma}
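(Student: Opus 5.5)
The plan is to argue element by element and use the standard norm equivalence on a finite element between the $L^2$ norm and the Euclidean norm of nodal values. Fix $T\in\mathcal T_h$ with $T\subseteq\Omega_i^c$. Shape regularity and the fixed degree $r$ give, for any polynomial $w=\sum_{l} w_l\phi_l$ on $T$ (sum over the local degrees of freedom of $T$),
\[
  \|w\|_T^2 \sim |T| \sum_{l:\,\supp\phi_l\cap T\neq\emptyset} w_l^2 .
\]
This follows by mapping to the reference element, where the two quantities are norms on a finite-dimensional space, and then scaling. The hidden constants depend only on $\shapereg$ and $r$. The whole proof then amounts to comparing nodal vectors on each element and summing over $T\subseteq\Omega_i^c$.

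For \eqref{Xiest1}, take $v\in\wV_{\Omega_i^c}$ and $T\subseteq\Omega_i^c$. The restriction $\Xi_i^c(v)|_T$ has nodal coefficients $v_l/\mu_l$ for $l\in\dof(\Omega_i^c)$ and $0$ otherwise. Each of these has modulus at most $|v_l|$, because $\mu_l\ge1$ for every $l\in\dof(\Omega_i^c)$ (indeed $i$ itself is counted in $\mu_l$). The norm equivalence therefore gives $\|\Xi_i^c(v)\|_T\lesssim\|v\|_T$. Squaring and summing over $T$ yields \eqref{Xiest1}.

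For \eqref{Xiest2}, take $v\in V_{\Omega_i^c}$, so that $v$ vanishes on $\partial\Omega_i^c$. The nodal values $v_l$ are zero for nodes on the boundary, so the only possibly nonzero coefficients are those with $l\in\dof(\Omega_i^c)$. This is the one point needing care: for Lagrange elements, a node in $\overline{\Omega_i^c}$ lying off $\partial\Omega_i^c$ has basis support inside $\overline{\Omega_i^c}$ (the boundary is resolved by the mesh), while nodes on the boundary carry zero values. Hence the nodal vector of $\Xi_i^c(v)$ on $T$ is $(v_l/\mu_l)$ on exactly the same index set where $v$ is nonzero. It remains to bound $\mu_l$ from above. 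The support of $\phi_l$ contains at least one element $T'$, and every subdomain $\Omega_j^c$ with $l\in\dof(\Omega_j^c)$ contains $T'$. Therefore $\mu_l\le\Lambda_c$ by \eqref{Lambda_l_c}, and so $|v_l/\mu_l|\ge\Lambda_c^{-1}|v_l|$. The norm equivalence on each $T$ then gives $\|\Xi_i^c(v)\|_T\gtrsim\Lambda_c^{-1}\|v\|_T$, and summing over $T$ yields \eqref{Xiest2}.

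The only mildly delicate step is the identification, in the second estimate, of the nonzero degrees of freedom of $v\in V_{\Omega_i^c}$ with $\dof(\Omega_i^c)$. The rest is routine.
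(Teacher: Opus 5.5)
Your proof is correct and is essentially the paper's argument: the element-wise equivalence between $\|w\|_T^2$ and the scaled sum of squared nodal values, combined with $1\le\mu_l\le\Lambda_c$ for $l\in\dof(\Omega_i^c)$ and the vanishing of the boundary nodal values of $v\in V_{\Omega_i^c}$, summed over $T\subseteq\overline{\Omega_i^c}$. The paper scales by $h_T^d$ rather than $|T|$, which is equivalent under shape regularity. Your version spells out several points the paper leaves implicit: why $\mu_l\le\Lambda_c$, why the only possibly nonzero nodal values of $v\in V_{\Omega_i^c}$ are indexed by $\dof(\Omega_i^c)$, and the short argument for \eqref{Xiest1}. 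One small fix: your index set $\{l:\supp\phi_l\cap T\neq\emptyset\}$ should be read as the nodes of $T$, as your parenthetical intends.
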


\begin{proof}\,
Due to the assumption that we are working with Lagrange elements,   
for any $T \in \scT_h$, there is an affine map $F_T$ mapping the unit simplex $\widehat{T}$ to $T$.
Then (see, e.g.\ \cite[Theorem~3.1.2]{Ciarlet:2002:FEM}), for any $w \in V^h$, setting $\widehat{w} = w \circ F_T$, we have   
\begin{align} \label{eqnorm}
	\Vert w \Vert_T^2 \ \sim\  h_T^{d} \Vert \widehat{w}\Vert_{\widehat{T}}^2 \ \sim \ h_T^{d} \sum_{j \in \ovdof(T)} w_j^2,
\end{align}
where $w_j$ is the value of $w$ at $j \in \dof(T)$ and the hidden  constants are independent of the mesh and $w$.   
(The second relation in \eqref{eqnorm} uses {($r$-dependent)} equivalence of norms on the space of polynomials of degree $r$.)
Hence, by Definition \ref{def: 2_7} of $\Xi_i^{{c}}(v)$ and since freedoms of $\Xi_i^c(v)$ are interior to $\Omega_i^c$, 
we have {for $v \in V_{\Omega_i^c}$,} 
\begin{align*}
	\|\Xi_i^{{c}}(v)\|_{\Omega_i^c}^2 
	&= \sum_{T \subseteq \overline{\Omega_i^c}}\|\Xi_i^{c}(v)\|_{T}^2 
	\sim \sum_{T \subseteq \overline{\Omega_i^c}} h_T^d \sum_{j \in \ovdof(T) \cap \dof(\Omega_i^c)}
	(\Xi_i^{{c}}(v))_j^2  
	= \sum_{T \subseteq \overline{\Omega_i^c}} h_T^d \sum_{j \in \ovdof(T) \cap \dof(\Omega_i^c)}
	\mu_j^{-2} v_j^2 
	\nonumber\\[0.5ex]
	&\ge \Lambda_c^{-2} \sum_{T \subseteq \overline{\Omega_i^c}} h_T^d \sum_ {j \in \ovdof(T) \cap \dof(\Omega_i^c)}  v_j^2 
	\sim  \Lambda_c^{-2}\sum_{T \subseteq \overline{\Omega_i^c}}\|v\|_{T}^2 = \Lambda_c^{-2} \|v\|_{\Omega_i^c}^2, 
\end{align*}
proving \eqref{Xiest2}.  The proof of \eqref{Xiest1} is analogous but simpler.  
\end{proof}

\section{Statement of the main result and technical tools}
\label{sec:theory}

We introduce the notation
\begin{align} 
  \Theta \assign \frac{1}{\min_{1 \le i \le N} \lambda^i_{m_i+1}} = {\frac{1}{\tau}}, \quad \text{where} 
  \ \tau \ \text{is defined in \eqref{def:tau}}. 
\label{notation:theta}
\end{align}

\begin{theorem}[GMRES convergence of the two-level preconditioned system] \label{theorem: convergence}
Let Assumptions~\ref{ass: 2_1}, \ref{Ass: 2_3}, \ref{ass:mesh}, \ref{ass:overlap} and \ref{ass:further} be satisfied
and let $\Cinv$, $\Ceig$, $\Lambda_l$, $\Lambda_c$ be as in Assumption~\ref{ass:mesh},
Theorem~\ref{est_below} and \eqref{Lambda_l_c}, respectively, and set
\begin{align}\label{Cstar}
	\Cstar \assign \Ceig \Cinvt.
\end{align}
Further, let $C_1$, $C_2$, $C_3$ be the constants (independent of the key parameters) derived in 
Lemma~\ref{PBP_PD}, Theorem~\ref{Lemma_3_1} and Lemma~\ref{lemma_3_3} respectively.
Assume that $k\ge k_0$.
Then there exists $h_{\star}>0$ such that the following statements hold for all $h\in(0,h_{\star})$ with $(kh)^2 \le C_1$. 
Suppose that
\begin{equation}
\label{eq: 4_3}
	s \assign 2{C_3}\bigl(1+\Lambda_\ell \Lambda_c^2 {\amax \nmin^{-1}} \Cstar \Theta \bigr)\Bigl({2 C_2\Lambda_c
	(\amax \nmin^{-1}\Cstar)^{1/2}k\Theta^{1/2}}(1+\Cstab)+3k\Lambda_\ell H_\ell\Bigr) < 1.
\end{equation}
Then, when GMRES is applied with the $\langle\cdot,\cdot\rangle_{\mathbf{D}_k}$-inner product with $\mathbf{D}_\kappa$ as in \eqref{def:Dk} 
to solve the preconditioned system given by \eqref{eq: 2_24}, then after $m$ iterations, the norm of the residual, $\mathbf{r}^{(m)}$, 
is bounded as follows:
\begin{equation}\label{eq: 4_22}
	\|\mathbf{r}^{(m)}\|_{\mathbf{D}_{\kappa}}^2 \le \bigl(1-\gamma^2\bigr)^m \|\mathbf{r}^{(0)}\|_{\mathbf{D}_{\kappa}}^2, 
\end{equation} 
where $\gamma$ is given by
\begin{equation}\label{def_gamma}
	\gamma \assign \frac{1-s}{C_3\bigl(1+\Lambda_\ell\Lambda_c^2\amax\nmin^{-1}\Cstar \Theta\bigr)(18 + 8 {\Lambda_{\ell}}^2)}.
\end{equation}
\end{theorem}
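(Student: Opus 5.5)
The proof will follow the Elman framework for GMRES in the $\mathbf{D}_\kappa$-inner product, as in \cite{Graham:2017:DDP,Bootland:2022:OSM}. By Proposition~\ref{prop: 2_6} it suffices to work with the operator $T$ in \eqref{eq: 2_26} acting on $(V^h,(\cdot,\cdot)_{1,k})$. We need two bounds: an upper bound $\|Tu\|_{1,k}\le C_{\rm up}\|u\|_{1,k}$, and a lower bound $(Tu,u)_{1,k}\ge c_{\rm low}\|u\|_{1,k}^2$ on the field of values. Elman's estimate (\cite[Lemma C.11]{Toselli:2005:DDM}) then gives \eqref{eq: 4_22} with $\gamma=c_{\rm low}/C_{\rm up}$.

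The first step is well-posedness of all the projections. The mesh condition $(kh)^2\le C_1$, the choice of $h_\star$ with Lemma~\ref{schatz_wang}, and the local Friedrichs estimate of Lemma~\ref{lemma: 2.5} (relevant when $kH_\ell$ is small, which \eqref{eq: 4_3} forces) should make $T_j^\ell$ and $T_0$ well defined; this is Lemmas~\ref{lemma_3_4} and \ref{lemma_3_5}. The upper bound is then routine. Each local projection is $b$-orthogonal up to the term $2k^2\|\cdot\|_n^2$, which Friedrichs controls by $k^2H_\ell^2$. The coarse projection is handled through the discrete inf-sup constant coming from $\Cstab$ and Lemma~\ref{schatz_wang}. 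Summing with \eqref{overlap} and \eqref{overlap1} should give $\|Tu\|_{1,k}\lesssim(\text{const}+\Lambda_\ell)\|u\|_{1,k}$, which I expect to be the origin of the factor $18+8\Lambda_\ell^2$.

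The core of the argument is a stable splitting with an approximation property for the coarse space. I would define $z_0=\sum_i E_i^c\Xi_i^c(\Pi_i u|_{\Omega_i^c})\in V_0$, where $\Pi_i$ is the spectral projector of Proposition~\ref{lem_abs_proj}, applied with $\txtb=b_{\Omega_i^c}$ and $\txtc=c_{\Omega_i^c}$. Theorem~\ref{lem_ind_ev} applies once positivity of $b_{\Omega_i^c}$ on $\ker c_{\Omega_i^c}$ is verified; this kernel consists of functions supported in a layer of elements near $\partial\Omega_i^c$, so the inverse estimate \eqref{inv_est} and $(kh)^2\le C_1$ give this, as in Lemma~\ref{PBP_PD}. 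By \eqref{partition_of_unity}, $u-z_0=\sum_iE_i^c\Xi_i^c(u-\Pi_iu)$. Proposition~\ref{lem_abs_proj} bounds each $\|\Xi_i^c(u-\Pi_iu)\|_{1,k}^2$ by $\Theta\, b_{\Omega_i^c}(u-\Pi_iu,u-\Pi_iu)$. It remains to bound $b_{\Omega_i^c}$ of the remainder by $\|u\|^2_{1,k,\Omega_i^c}$, using \eqref{Ivan281} and the $b$-orthogonality. \emph{This is the main obstacle}, because $b$ is indefinite and $\Pi_i$ is not $b$-stable in the $1,k$-norm. I would try to show $\|\Pi_iu\|_{1,k,\Omega_i^c}$ is controlled by going through the $c$-norm, with Lemma~\ref{Xi_below} and the inverse estimate converting $c$-norms back to full norms. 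This costs the factor $\Lambda_c^2\amax\nmin^{-1}\Cinvt\Ceig$, i.e.\ $\Cstar$, and produces the estimate $\|u-z_0\|_{1,k}\lesssim\Lambda_c(\amax\nmin^{-1}\Cstar\Theta)^{1/2}\|u\|_{1,k}$ (Theorem~\ref{Lemma_3_1}). An Aubin--Nitsche style duality argument using $\Cstab$ then converts this into $k\|u-T_0u\|_n\lesssim C_2\Lambda_c(\amax\nmin^{-1}\Cstar)^{1/2}k\Theta^{1/2}(1+\Cstab)\|u\|_{1,k}$.

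For the lower bound, I would write $(Tu,u)_{1,k}=b$-terms plus $2k^2(\cdot,\cdot)_n$ terms. Testing the decomposition $u=T_0u+(u-T_0u)$ and the local splitting of $u-T_0u$ via a partition of unity on the local cover gives a stable decomposition with constant $C_3(1+\Lambda_\ell\Lambda_c^2\amax\nmin^{-1}\Cstar\Theta)$ (Lemma~\ref{lemma_3_3}). The $L^2$ perturbation terms are bounded by the coarse $L^2$ approximation estimate above, and by $3k\Lambda_\ell H_\ell$ from Friedrichs on the local subdomains. Altogether this yields $(Tu,u)_{1,k}\ge\bigl[C_3(1+\cdots)\bigr]^{-1}(1-s)\|u\|_{1,k}^2$ with $s$ as in \eqref{eq: 4_3}. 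Dividing by the upper bound gives $\gamma$ in \eqref{def_gamma}.
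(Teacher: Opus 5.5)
Your architecture is the paper's: Lemma~\ref{ther: 4_1} followed by Elman. However, the step you flag as the main obstacle has a genuine gap.

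Write $\Pi_i v=\sum_{m\le m_i}\alpha_m p_m^i$ with $p_m^i=(I+\scS_i)\whp_m^i$. Then $\Pi_i v=\widehat q+\scS_i\widehat q$ with $\widehat q\in V_{\Omega_i^c}$ and $\scS_i\widehat q\in\ker c_i$. The $c$-norm sees only $\widehat q$; indeed $c_i(\Pi_iv,\Pi_iv)=\sum_{m\le m_i}\alpha_m^2\le c_i(v,v)$. Moreover \eqref{Xiest2} holds only on $V_{\Omega_i^c}$. So Lemma~\ref{Xi_below} and the inverse estimate can at best bound $\widehat q$. Nothing in your list controls the boundary-layer part $\scS_i\widehat q$, on which $c_i$ vanishes, so going ``through the $c$-norm'' does not close.

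The missing idea is the quantitative form of Lemma~\ref{PBP_PD}. Balancing $b_i(\scS_i\whv,\scS_i\whv)\ge C_1h^{-2}\|\scS_i\whv\|_{\Omega_i^c}^2$ against $-b_i(\whv,\scS_i\whv)\le(\amax\Cinvt+1)k^2\|\scS_i\whv\|_{\Omega_i^c}\|\whv\|_{\Omega_i^c}$ gives \eqref{Ivan32} when $(hk)^2\le C_1$. The paper packages this as $\lambda_1^i\gtrsim-\Ceig$ (Theorem~\ref{est_below}), using only the negative part $-k^2\|\cdot\|_n^2$ of $b_i$. Then $-b_i(\Pi_iv,\Pi_iv)=-\sum_{m\le m_i}\lambda_m^i\alpha_m^2\le|\lambda_1^i|\,c_i(v,v)$, and no norm of $\Pi_iv$ is needed at all.

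If instead you bound $|b_i(\Pi_iv,\Pi_iv)|$ by the full $\|\Pi_iv\|_{1,k,\Omega_i^c}^2$, as you propose, you need a further inverse estimate on $\Pi_iv$. That costs an extra factor of order $\amax\Cinvt$, so you would not arrive at $\Cstar$ in \eqref{eq: 4_3}. Since the statement imports $\Ceig$ and $C_2$, you may simply cite Theorems~\ref{est_below} and~\ref{Lemma_3_1}, but your sketch would not prove them.

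Several supporting claims are also off:
\begin{itemize}
\item Positivity of $b_i$ on $\ker c_i$ does not come from \eqref{inv_est}, which bounds gradients from above. Lemma~\ref{PBP_PD} instead uses a local Friedrichs-type inequality $\|\nabla v\|\gtrsim h^{-1}\|v\|$ for functions vanishing at all interior nodes.
\item The splitting of Lemma~\ref{lemma_3_3} is $v=z_0+\sum_j\Phi_j^\ell(v-z_0)$ with the spectral interpolant $z_0$, not $T_0u+\sum_j\Phi_j^\ell(u-T_0u)$. With $T_0u$ and only $\|u-T_0u\|_{1,k}\le2\|u\|_{1,k}$ available, you get a $\Theta$-independent but $\Lambda_\ell$-dependent constant. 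That is a different constant, so it does not give the stated $s$ and $\gamma$ unless you add a Schatz-type quasi-optimality argument for $T_0$ on $V_0$.
\item Stability of $T_0$ does not follow from a discrete inf-sup constant on $V^h$, which does not pass to the subspace $V_0$. Lemma~\ref{lemma: 3_7} obtains $\|u-T_0u\|_{1,k}\le2\|u\|_{1,k}$ by comparing with the $(\cdot,\cdot)_{1,k}$-orthogonal projection $P_0$ and using the duality estimate \eqref{3_7_b} under \eqref{3_7_c}.
\item The field-of-values bound is then organised through the proxy $P$ via $(Tu,u)_{1,k}=(Pu,u)_{1,k}+R$.
\end{itemize}
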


\begin{corollary} \label{conditions_corollary}
Under the same conditions as Theorem~\ref{theorem: convergence}, if \eqref{eq: 4_3} holds, then there exists $C>0$, which is independent of key  parameters such that
\begin{equation}\label{eq:main}
	k H_\ell \le C \qquad\text{and}\qquad (1+\Cstab)^2 k^2  \Theta \le C.
\end{equation}
Conversely, if \eqref{eq:main} holds for $C>0$ small enough so that
\begin{equation}\label{C_small}
	2C_3\bigl(1+\Lambda_\ell \Lambda_c^2 \amax\nmin^{-1}\Cstar C {k_0^{-2}} \bigr)
	\bigl({2 C_2\Lambda_c(\amax \nmin^{-1} \Cstar C)^{1/2}} +3 \Lambda_\ell  C\bigr) < 1,
\end{equation}
then \eqref{eq: 4_3} is satisfied and $\gamma$ from \eqref{def_gamma} is bounded below by a positive
number independent of $k$.
\end{corollary}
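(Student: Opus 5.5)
The corollary is elementary algebra on the quantity $s$ in \eqref{eq: 4_3}. We treat the two directions separately and use only that all constants involved ($C_2$, $C_3$, $\Lambda_c,\Lambda_\ell\ge 1$, $\amax\nmin^{-1}\ge 1$, $\Cstar$) are independent of the key parameters in the sense of Notation~\ref{notation:sim}. Note that $\amax\nmin^{-1}\ge 1$ follows from Assumption~\ref{ass: 2_1}(ii), since $\amax\ge\amin=1=\nmax\ge\nmin$.

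\emph{Forward direction.} Assume $s<1$. Since $1+\Lambda_\ell\Lambda_c^2\amax\nmin^{-1}\Cstar\Theta\ge 1$ and every summand in the second bracket is nonnegative, each summand separately satisfies
\[
2C_3\cdot 3k\Lambda_\ell H_\ell<1
\qquad\text{and}\qquad
4C_3C_2\Lambda_c(\amax\nmin^{-1}\Cstar)^{1/2}k\Theta^{1/2}(1+\Cstab)<1 .
\]
The first gives $kH_\ell\le (6C_3\Lambda_\ell)^{-1}\le (6C_3)^{-1}$, using $\Lambda_\ell\ge1$. Squaring the second gives
\[
(1+\Cstab)^2k^2\Theta<\bigl(16C_3^2C_2^2\Lambda_c^2\amax\nmin^{-1}\Cstar\bigr)^{-1}\le (16C_3^2C_2^2\Cstar)^{-1},
\]
using $\Lambda_c\ge1$ and $\amax\nmin^{-1}\ge1$. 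One subtlety is that $\Cstar=\Ceig\Cinvt$ contains $\Cinv$, which is a key parameter. Since $\Cinv\ge1$, we will simply bound $\Cstar^{-1}\le\Ceig^{-1}$, provided $\Ceig$ (from Theorem~\ref{est_below}) is bounded below independently of the key parameters. We will then take $C$ to be the minimum of the resulting constants.

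\emph{Converse direction.} Assume \eqref{eq:main} with $C$ satisfying \eqref{C_small}. The three terms of $s$ are bounded as follows.
\begin{itemize}
\item From $(1+\Cstab)^2k^2\Theta\le C$ we get $k\Theta^{1/2}(1+\Cstab)\le C^{1/2}$. Hence the first term in the second bracket is at most $2C_2\Lambda_c(\amax\nmin^{-1}\Cstar C)^{1/2}$.
\item The second term in the second bracket is $3k\Lambda_\ell H_\ell\le3\Lambda_\ell C$.
\item For the first bracket we use $\Theta\le C/\bigl((1+\Cstab)^2k^2\bigr)\le Ck^{-2}\le Ck_0^{-2}$, which relies on $k\ge k_0$ and $\Cstab>0$.
\end{itemize}
Substituting these bounds shows that $s$ is at most the left side of \eqref{C_small}, so $s<1$.

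For the lower bound on $\gamma$, note that the bracket $1+\Lambda_\ell\Lambda_c^2\amax\nmin^{-1}\Cstar\Theta$ in \eqref{def_gamma} is bounded by the $k$-independent quantity $1+\Lambda_\ell\Lambda_c^2\amax\nmin^{-1}\Cstar Ck_0^{-2}$. Also, $1-s$ is bounded below by $1$ minus the left side of \eqref{C_small}, which is positive and independent of $k$. Therefore $\gamma$ is bounded below independently of $k$.

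The only delicate point is the forward direction's requirement that $C$ be independent of $\Cinv$, $\Lambda$'s, $\amax$ and $\nmin$. This is handled by the monotonicity observations above: $\Lambda_\circ\ge1$, $\amax\nmin^{-1}\ge1$ and $\Cinv\ge1$. If $\Ceig$ cannot be bounded below, we will instead state the constant with that dependence made explicit.
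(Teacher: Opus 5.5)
Your argument is correct and is essentially the paper's proof: in the forward direction you drop the factor $1+\Lambda_\ell\Lambda_c^2\amax\nmin^{-1}\Cstar\Theta\ge1$ and the factors $\Lambda_\ell,\Lambda_c,\amax\nmin^{-1},\Cstar\ge1$, and in the converse you use $\Theta\le Ck_0^{-2}$. Two small fixes are needed: the point you left open is immediate, since the explicit formula $\Ceig=\Lambda_c^{2}\nmin^{-1}(\amax\Cinvt+2)^2$ of Theorem~\ref{est_below} gives $\Ceig\ge9$ and hence $\Cstar\ge1$ (exactly what the paper uses); and the forward constant must be the \emph{maximum}, not the minimum, of $(6C_3)^{-1}$ and $(16C_2^2C_3^2)^{-1}$.
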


\begin{proof}
Assume first that \eqref{eq: 4_3} is satisfied.  
Since  $\Lambda_\ell\ge1$,  $\Lambda_c\ge1$, $\amax\ge1$, $\nmin \le 1$ and $\Cstar \ge 1$, we have
\[
	2C_2 k \Theta^{\frac{1}{2}} (1+\Cstab) + 3k H_\ell < \frac{1}{2C_3},
\]
and hence
\[
	k H_\ell < \frac{1}{6C_3} \qquad\text{and}\qquad
	(1+\Cstab)^2 k^{2}\Theta < \frac{1}{16C_2^2C_3^2},
\]
which yields \eqref{eq:main}.  

Conversely, assume that \eqref{eq:main} is satisfied with $C>0$ such that \eqref{C_small} holds.
Since, by assumption, ${k\ge k_0}$, we have
\[
	\Theta \le \frac{C}{(1+\Cstab)^2 k^{2}} \le C{k_0^{-2}}.
\]
Hence, using this with  \eqref{eq:main} and \eqref{C_small}, we have
\begin{align}
	s \le 2C_3 \bigl(1 + \Lambda_\ell \Lambda_c^2 \amax \nmin^{-1} \Cstar C k_0^{-2}\bigr)
	\bigl(2 C_2\Lambda_c(\amax \nmin^{-1} \Cstar C)^{1/2}  +3\Lambda_\ell C\bigr) < 1,
\label{ests}
\end{align}
i.e.\ \eqref{eq: 4_3} is satisfied.
        
Moreover, combining \eqref{def_gamma} with \eqref{ests},  we obtain a $k$-independent lower bound for $\gamma$. 
\end{proof}

In practice, conditions \eqref{eq:main} will introduce constraints in the size of the
local subdomains $\Omega_j^\ell$ and on the number of modes to be added in the coarse space, both
depending on $k$.

For the rest of the paper we will assume that Assumptions~\ref{ass: 2_1}, \ref{Ass: 2_3}, \ref{ass:mesh}, \ref{ass:overlap} and \ref{ass:further}
are  satisfied, although some of the intermediate results below require only a subset of these assumptions.

\subsection[Properties of the $H_k$-GenEO coarse space]{Properties of the {\boldmath$H_k$}-GenEO coarse space}
\label{subsec:coarse_space}

In this subsection we  apply the abstract spectral theory from  \S\ref{subsec:abstract}
to the generalised eigenvalue problem \eqref{eq:  5_12}.  To ease the exposition, we restrict here to a  
single coarse subdomain $\Omega_i^c$ with $i$ fixed and (in this subsection only) we use the following simplified notation.

\begin{notation}[Notation used in  \S \ref{subsec:coarse_space}]
\label{not:simple}
For a given $i$ we set
\begin{align*} 
	& \widetilde{V}_i \assign \widetilde{V}_{\Omega_i^c}, \quad 
	V_i \assign V_{\Omega_i^c}, \quad 
	b_i \assign {b}_{\Omega_i^c}, \quad 
	c_i \assign c_{\Omega_i^c},
	\\ 
	& n_i \assign \dim \widetilde{V}_i \quad \text{and} \quad 
	s_i \assign \dim \ker c_i. 
\end{align*}
\end{notation}

\noindent 
The next result identifies  $\ker c_i$ and shows that $b_i$ is positive definite on it, 
as required in Theorem~\ref{lem_ind_ev}.

\begin{lemma}[Positive definiteness of $b_i$ on the kernel of $c_i$] 
\label{PBP_PD}
We have 
\begin{equation}\label{ker_scC}
	\ker c_i = \mathrm{span} \{ \phi_l^i: l \in \ovdof(\Omega_i^c) \backslash \dof(\Omega_i^c)\}, 
	\quad \text{where} \quad \phi_l^i = \phi_l\vert_{\Omega_i^c},   
\end{equation}
and $\phi_l\in \tV_i$ is the nodal basis function $(\phi_l)_{l'} = \delta_{l,l'}$. 
Moreover, there exists a constant $C_1>0$ (which may depend on $\shapereg$ from Assumption \ref{ass:mesh}) such that,  
for all $k,h$ satisfying $(hk)^2 \le C_1$, we have 
\begin{align} \label{pdb1}
	b_i(v,v) \ge {C_1} h^{-2} \|v\|_{\Omega_i^c}^2   \qquad \text{for all} \;\; v \in \ker c_i.
\end{align}
\end{lemma}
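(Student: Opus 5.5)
The proof splits into two parts: identifying $\ker c_i$, and establishing the coercivity estimate \eqref{pdb1} on it.

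\emph{Identification of the kernel.} By Definition~\ref{defH_GenEO}, $c_i(v,v) = \|\Xi_i^c(v)\|_{1,k,\Omega_i^c}^2$. Since $\|\cdot\|_{1,k,\Omega_i^c}$ is a norm on $\widetilde V_i$ and $c_i$ is positive semi-definite, we have $v\in\ker c_i$ if and only if $c_i(v,v)=0$, which in turn holds if and only if $\Xi_i^c(v)=0$. Write $v=\sum_{l\in\ovdof(\Omega_i^c)} v_l\phi_l^i$. Then $\Xi_i^c(v)=\sum_{l\in\dof(\Omega_i^c)}\mu_l^{-1}v_l\phi_l^i$. For $l\in\dof(\Omega_i^c)$ we have $\mu_l\ge1$, and the $\phi_l^i$ are linearly independent, so $\Xi_i^c(v)$ vanishes exactly when $v_l=0$ for every $l\in\dof(\Omega_i^c)$. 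This is \eqref{ker_scC}.

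\emph{Coercivity on the kernel.} Let $v\in\ker c_i$, so $v$ is a combination of nodal functions attached to boundary (or exterior-touching) degrees of freedom only. The idea is a reverse Poincaré/Friedrichs estimate: such functions are supported in the layer of elements touching $\partial\Omega_i^c$ (more precisely, those carrying a degree of freedom in $\ovdof\setminus\dof$), and on each such element $T$ they vanish at all interior nodes of $\Omega_i^c$. Hence I would prove the elementwise bound
\[
	\|v\|_T \lesssim h_T \|\nabla v\|_T .
\]
Map $T$ to the reference simplex $\widehat T$. On the finite-dimensional space of degree-$r$ polynomials on $\widehat T$ whose values at a prescribed nonempty set of nodes vanish, the seminorm $\|\nabla\widehat w\|_{\widehat T}$ is a norm. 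This holds because a constant vanishing at a node is zero. The point that needs care is to check that every element $T$ on which $v$ does not vanish identically contains at least one node in $\dof(\Omega_i^c)$, or else that $v$ is nonconstant on $T$ for another reason. If $T$ has all its nodes in $\ovdof\setminus\dof$, I would argue differently, for example by using that $T$ touches $\partial\Omega_i^c$ or that a neighbouring element ties the values down. Failing that, I would use a patchwise argument over the boundary layer. Either way there are finitely many reference configurations, so equivalence of norms together with the scaling $\|\widehat w\|\sim h_T^{-d/2}\|w\|_T$ and $\|\nabla\widehat w\|\sim h_T^{1-d/2}\|\nabla w\|_T$ gives the bound with a constant depending only on $\shapereg$ and $r$. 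I expect this step, which amounts to handling elements with no interior node, to be the main obstacle. If it fails as stated, I would reduce the statement to elements in the support that do carry an interior node; this is typical for Lagrange elements on meshes resolving $\partial\Omega_i^c$.

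\emph{Conclusion.} Summing the elementwise bound over elements and using $a_{\Omega_i^c}(v,v)\ge\|\nabla v\|^2$ (since $\amin=1$) gives $a_{\Omega_i^c}(v,v)\ge c_0h^{-2}\|v\|_{\Omega_i^c}^2$. Then, with $n\le1$,
\[
	b_i(v,v)\ge c_0h^{-2}\|v\|_{\Omega_i^c}^2-k^2\|v\|_{\Omega_i^c}^2\ge (c_0-(kh)^2)h^{-2}\|v\|_{\Omega_i^c}^2 .
\]
Choosing $C_1=c_0/2$ and imposing $(kh)^2\le C_1$ yields $b_i(v,v)\ge C_1h^{-2}\|v\|_{\Omega_i^c}^2$, which is \eqref{pdb1}.
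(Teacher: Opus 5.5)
Your identification of $\ker c_i$ (via $c_i(v,v)=\|\Xi_i^c(v)\|_{1,k,\Omega_i^c}^2$ and $\mu_l\ge1$) is correct and coincides with the paper. So is your final step, passing from $\|\nabla v\|_{\Omega_i^c}^2\ge C'h^{-2}\|v\|_{\Omega_i^c}^2$ to \eqref{pdb1} with $C_1=C'/2$.

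The gap is in the middle step. The elementwise bound $\|v\|_T\lesssim h_T\|\nabla v\|_T$ does not hold on $\ker c_i$, and the case you defer is not exotic. Take $r=1$ and a floating square subdomain of a Cartesian mesh cut into triangles, and suppose its corner cell is split by the diagonal joining the two mesh neighbours of the corner node. The corner triangle $T$ then has all three vertices in $\ovdof(\Omega_i^c)\setminus\dof(\Omega_i^c)$. Hence $v=\sum_{l\in\ovdof(\Omega_i^c)\setminus\dof(\Omega_i^c)}\phi_l^i\in\ker c_i$ is identically $1$ on $T$, so $\nabla v|_T=0$ while $\|v\|_T>0$.

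Consequently your fallback of restricting attention to elements that carry an interior node is not admissible. The $L^2$ mass of $v$ on such elements is part of $\|v\|_{\Omega_i^c}$, and it can only be controlled through the gradient of $v$ on neighbouring elements. Observing that $T$ touches $\partial\Omega_i^c$ gives nothing either, since none of its nodal values is constrained.

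The missing ingredient is the patchwise argument you only mention in passing, and it is what the paper does. Around each dof $l$ on $\partial\Omega_i^c$ it takes a ball $B(l,h_l)$, with $h_l$ the diameter of the two-layer patch of elements around $l$. It uses that $v$ vanishes on a substantial part of the boundary of that ball inside $\Omega_i^c$. It then applies a generalised Friedrichs inequality to get $\|v\|\lesssim h\|\nabla v\|$ on each ball, and sums using the bounded overlap of the balls that follows from shape regularity.

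Equivalently, you can attach to each element $T$ in the support of $v$ a connected patch $\omega_T\subseteq\overline{\Omega_i^c}$ of boundedly many elements. The patch must contain a node at which every function in $\ker c_i$ vanishes: a node of $\dof(\Omega_i^c)$, or a Dirichlet node on $\partial\Omega$. Then $\|\nabla w\|_{\omega_T}$ is a norm on the continuous piecewise polynomials on $\omega_T$ vanishing at that node. Scaling plus a compactness argument over shape-regular patch configurations (no longer a finite list of reference configurations) gives $\|v\|_{\omega_T}\lesssim h\|\nabla v\|_{\omega_T}$.

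Be aware that some geometric property of this kind is genuinely needed, by you and tacitly by the paper. Consider a subdomain with an appendage of length $L$ that is one element thick. All nodes of the appendage lie in $\ovdof(\Omega_i^c)\setminus\dof(\Omega_i^c)$, and a function decaying linearly along it lies in $\ker c_i$. For that function $\|\nabla v\|^2/\|v\|^2$ is only of order $L^{-2}$, not $h^{-2}$.
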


\begin{proof}\, 
The hidden constants in this proof may depend on $\shapereg$.
To obtain \eqref{ker_scC}, note that, from Definition~\ref{def: 2_7}, if $v \in \wV_i$ with $v_l = 0$ 
for all $l \in \dof \Omega_i^c$, then $\Xi_i^c(v) \equiv 0$ and so $v \in \ker c_i$. 
Conversely, if {$v \in \ker c_i$}, then, by Assumption \ref{ass: 2_1},
\[
	0 = c_i(v,v) 
	= \int_{\Omega_i^c} \left({\nabla} {\Xi_i^c}(v)\cdot A \nabla {\Xi_i^c}(v) + k^2 {n} \,{(\Xi_i^c(v))^2} \right)
	\ge {\Vert \nabla \Xi_i^c(v)\Vert_{a,\Omega_i^c}^2 + k^2 \nmin\Vert \Xi_i^c(v) \Vert_{\Omega_i^c}^2},
\]
which implies $\Xi_i^c(v)\equiv 0$, and thus  $v_l = 0$ for all $l \in \dof \Omega_i^c$, hence proving \eqref{ker_scC}.    

To obtain \eqref{pdb1}, note first that, for any $v \in \wV_i$ (since $\amin=1 = \nmax$), 
\begin{equation}     
	b_{i}(v,v)
	= \int_{\Omega_i^c} \bigl(\nabla v \cdot (A\nabla v) - k^2 {n} v^2\bigr) 
	\ge \|\nabla v\|_{\Omega_i^c}^2 - k^2 \|v\|_{\Omega_i^c}^2.  
	\label{iggeq1} 
\end{equation}
Now let $v \in \ker c_i$ and note that $v$ is supported only on elements which touch $\partial \Omega_i$.
For any degree of freedom $l \in \partial \Omega_i^c$ let $\mathcal{T}_\ell^0$ denote the set of elements containing the freedom $l$
and let $\mathcal{T}_l$ denote the union of $\mathcal{T}_\ell^0$ with all elements which touch the elements in $\mathcal{T}_\ell^0$. 
Then let $h_\ell = \mathrm{diam}(\mathcal{T}_\ell)$, and let $B(l,h_l)$ denote the open ball of radius $h_l$ centred at $l$. 
Then it is easy to see that $\supp v \subseteq \bigcup_{l\in \partial \Omega_i^c}  B(l,h_l)$.
Moreover, for each $l$, $v$ vanishes on a portion of the boundary of  $B(l,h_l)$ having $d-1$ dimensional measure $\gtrsim h_l$. 
So, by the generalised Friedrichs' inequality (see, e.g.\ \cite[Lemma~A.15]{Toselli:2005:DDM}) and since $h_l \lesssim  h$,
we have
\begin{equation} \label{Fried}
	\Vert \nabla v \Vert_{B(l,2h_l)}^2  \gtrsim  {h_{l}}^{-2} \Vert v \Vert_{B(l,2h_l)}^2 \gtrsim   {h}^{-2}
	\Vert v \Vert_{B(l,2h_l)}^2.
\end{equation}
Also, by shape regularity, each $T \in \supp v$ can only be overlapped by a bounded number of balls $B(l,h_l)$ as $h \rightarrow 0$; 
so we have
\begin{align*}
	\Vert \nabla v \Vert_{\Omega_i^c}^2  &= \sum_{T\in \scT_h(v)} \Vert \nabla v \Vert_{T}^2 
	\sim \sum_{T\in \scT_h(v)} \sum_{l \in \partial \Omega_i^c}\Vert \nabla v \Vert_{T\cap B(l,h_l)}^2 
	= \sum_{l \in \partial \Omega_i^c} \sum_{T\in \scT_h(v)} \Vert \nabla v \Vert_{T\cap B(l,h_l)}^2  
	= \sum_{l \in \partial \Omega_i^c}  \Vert \nabla v \Vert_{B(l,h_l)}^2.
\end{align*}
Combining this with \eqref{Fried} we obtain
$ \Vert \nabla v \Vert_{\Omega_i^c}^2 \gtrsim h^{-2} \Vert v \Vert_{\Omega_i^c}^2 $.

Hence, from \eqref{iggeq1}, there exists a constant $C'>0$ (which may depend on $\shapereg$) such that
\[
	b_i (v,v) \ge (C' h^{-2} - k^2) \|v\|_{\Omega_i^c}^2 
	= (C' - (hk)^2) h^{-2} \|v\|_{\Omega_i^c}^2
\]
for all $h$ and $v \in \ker c_i$. Hence, if $(hk)^2 \le C_1\assign C'/2$, we have
$b_i (v,v) \ge C_1 h^{-2} \|v\|_{\Omega_i^c}^2$.
\end{proof}

\begin{remark} \label{rem:apply}
We can now apply the theory in \S \ref{subsec:abstract} with
$\txtb \assign b_{i}$ and $\txtc \assign c_{i}$. 
The corresponding eigenvalues \eqref{Ivan11} are denoted as follows
\begin{align} \label{Ivan11a}
	-\infty < \lambda_1^i \leq \lambda_2^i \leq \cdots \leq \lambda_{n_i-s_i}^i < \infty,
\end{align}
and the corresponding eigenfunctions are $\{p_m^i: m = 1, \ldots, n_i-s_i\} \subseteq \wV_i$.
The remaining eigenfunctions
$\{p_{n_i-s_i+1}^i, \ldots , p_{n_i}^i\}$ form a basis of $\ker c_i$.
The eigenfunctions satisfy the orthogonality properties corresponding to \eqref{orth_b} and \eqref{orth_c}.
To apply Proposition~\ref{lem_abs_proj}, we need to verify Assumption~\ref{ass:further}, i.e.\ 
show that at least one eigenvalue in \eqref{Ivan11a} is positive.
A sufficient condition for this is given next.
\end{remark}
 
\begin{theorem}[Positivity of at least one eigenvalue]\label{pos_ev}
Consider the GEVP of Definition \ref{defH_GenEO}.
For each $i\in\{1,\ldots,N\}$, assume that there exists a set $\mathcal{D}_i$ of
$s_i+1$ degrees of freedom in $\dof(\Omega_i^c)$ 
such that $\supp\phi_l^i\cap\supp\phi_{l'}^i$ has zero $d$-dimensional measure
for each pair $l,l' \in \mathcal{D}_i$ with $l\ne l'$.
Then there exists a constant $\Cpos>0$ such that, if $(hk)^2 \le \Cpos$, 
then, for each $i\in\{1,\ldots,N\}$, there is at least one index $m_i\in\{1,\ldots,n_i-s_i\}$ 
such that $\lambda_{m_i}^i > 0$.
\end{theorem}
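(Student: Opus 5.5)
The plan is to argue by contradiction with a dimension count. I will compare the subspace spanned by the finite eigenvectors with the subspace spanned by the basis functions indexed by $\mathcal{D}_i$, using that $\txtb = b_i$ is nonpositive on the former and positive definite on the latter.

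\textbf{Step 1 (setup).} Fix $i$. Let $C_1$ be the constant of Lemma~\ref{PBP_PD}, and take $\Cpos \le C_1$ so that Theorem~\ref{lem_ind_ev} applies with $\txtb=b_i$, $\txtc=c_i$ (Remark~\ref{rem:apply}). This gives the eigenpairs $(\lambda_j^i,p_j^i)$, $j=1,\dots,n_i-s_i$, satisfying \eqref{orth_b}--\eqref{orth_c}. Set $W\assign\linspan\{p_j^i: 1\le j\le n_i-s_i\}$. For $w=\sum_j\alpha_jp_j^i\in W$, relations \eqref{orth_b}--\eqref{orth_c} give
\[
b_i(w,w)=\sum_j\lambda_j^i\alpha_j^2 \qquad\text{and}\qquad c_i(w,w)=\sum_j\alpha_j^2 .
\]
Hence the $p_j^i$ are linearly independent and $\dim W=n_i-s_i$. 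Suppose, for contradiction, that $\lambda_j^i\le 0$ for all $j$. Then $b_i(w,w)\le 0$ for every $w\in W$.

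\textbf{Step 2 (positivity on $\mathcal{D}_i$).} Let $U\assign\linspan\{\phi_l^i: l\in\mathcal{D}_i\}$. Since these are distinct nodal basis functions, $\dim U=s_i+1$. For $v=\sum_{l\in\mathcal{D}_i}a_l\phi_l^i$, the supports overlap only on null sets, so all cross terms vanish and
\[
b_i(v,v)=\sum_{l\in\mathcal{D}_i} a_l^2\, b_i(\phi_l^i,\phi_l^i).
\]
Each $l\in\mathcal{D}_i\subseteq\dof(\Omega_i^c)$ is an interior freedom, so $\phi_l$ vanishes on the boundary of its support patch, which has diameter $\lesssim h$ by shape regularity. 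Friedrichs' inequality (Lemma~\ref{lemma: 2.5}), or the local argument used in the proof of Lemma~\ref{PBP_PD}, then gives $\|\nabla\phi_l\|^2\ge C'h^{-2}\|\phi_l\|^2$. Combining this with \eqref{iggeq1} yields
\[
b_i(\phi_l^i,\phi_l^i)\ge (C'h^{-2}-k^2)\|\phi_l^i\|^2_{\Omega_i^c}>0
\]
as soon as $(hk)^2\le \Cpos$ with $\Cpos<C'$. Therefore $b_i$ is positive definite on $U$. I take $\Cpos\assign\min(C_1,C'/2)$, which depends only on $\shapereg$.

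\textbf{Step 3 (dimension count).} Both $U$ and $W$ lie in $\wV_i$, which has dimension $n_i$. Since $\dim U+\dim W=n_i+1>n_i$, there is some $v\ne0$ in $U\cap W$. By Step 2 $b_i(v,v)>0$, while by Step 1 $b_i(v,v)\le0$, a contradiction. Hence some $\lambda^i_{m_i}>0$, and since the eigenvalues are ordered, in fact $\lambda^i_{n_i-s_i}>0$.

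The only delicate point is the uniform local Friedrichs estimate $\|\nabla\phi_l\|\gtrsim h^{-1}\|\phi_l\|$ in Step 2, with a constant depending only on shape regularity. This is essentially the same estimate already established in the proof of Lemma~\ref{PBP_PD}, and an inverse-type argument on the reference patch suffices. Everything else is linear algebra.
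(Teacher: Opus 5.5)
Your proof is correct and is essentially the paper's argument: the paper gets a nonzero element of $U\cap W$ by noting that the $\ker c_i$-components of the $s_i+1$ functions $\phi_l^i$, $l\in\mathcal{D}_i$, must be linearly dependent, which is your dimension count written in coordinates. It then shows $b_i$ is positive on that element using Friedrichs' inequality on each support patch plus the disjoint supports, exactly as you do.
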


\begin{remark}\label{rem:modest} 
The assumptions of Theorem~\ref{pos_ev} impose only a mild restriction.
For example, consider a square subdomain discretised by a uniform grid
$n_{dof}\times n_{dof}$, then $s_i \sim$ the number of boundary dofs $\sim n_{dof}$,
while the number of interior dofs is $\sim n_{dof}^2$.
For $n_{dof}$ large enough, a  set $\mathcal{D}_i$ can be chosen on  a
subgrid of size $\sim (n_{dof}/2)^2$.   
\end{remark}

\begin{proof}[Proof of Theorem~\ref{pos_ev}]
Let $i\in\{1,\ldots,N\}$.  Note that if  $l \in \dof(\Omega_i^c)$, then $\phi_l^{{i}} \in V_i$. 
Denoting the dofs in the assumption by {$l_1,\ldots,l_{s_i+1}$} we use Corollary~\ref{lem_v_abstract2} to write
\begin{align}\label{each_side}
	\phi_{{l_j}}^{{i}} = v_0^{i,{j}} + \sum_{m=1}^{n_i-s_i}\alpha_m^{i,{j}} p_m^i,
	\qquad j\in\{1,\ldots,s_i+1\},
\end{align}
with $v_0^{i,j}\in\ker c_i$ and $\alpha_m^{i,j}\in\mathbb R$.
Since $s_i=\dim\ker c_i$, there exist $\beta_1^i,\ldots,\beta_{s_i+1}^i\in\mathbb R$, not all vanishing, 
such that $\sum_{j=1}^{s_i+1}\beta_j^i v_0^{i,j}=0$.
Set $\psi^i\assign\sum_{{j=1}}^{s_i+1}\beta_{{j}}^i\phi_{{l_j}}^i$.  
Then multiplying each side of \eqref{each_side} by $\beta_j^i$ and summing over $j$ we obtain  
\begin{equation}\label{ml02}
	\psi^i = \sum_{m=1}^{n_i-s_i}\widetilde{\alpha}_m^i p_m^i \qquad \text{with} \;\; 
	\widetilde{\alpha}_m^i = \sum_{j=1}^{s_i+1} \beta_j \alpha_m^{i,j}.
\end{equation}
Further, set $d_l^i \assign \diam \supp\phi_l^i$.
For $j\in\{1,\ldots,s_i+1\}$, it follows from Friedrichs' inequality \eqref{eq:friedrichs} and the assumptions $\amin = \nmax = 1$ that 
\begin{align}
	b_{i}(\phi_{l_j}^{{i}},\phi_{l_j}^{{i}})
	\ge\|\phi_{l_j}^i\|_{a,\Omega_i^c}^2 - k^2\|\phi_{l_j}^i\|_{\Omega_i^c}^2
	\ge \Bigl(\frac{2}{(d_{l_j}^i)^2}-k^2\Bigr)\|\phi_{l_j}^i\|_{\Omega_i^c}^2 
    = \frac{1}{(h_{l_j}^i)^{2}}\biggl(2\Bigl(\frac{h_{l_j}^i}{d_{l_j}^i}\Bigr)^2-(kh_{l_j}^i )^2\biggr)\|\phi_l^i\|_{\Omega_i^c}^2.
\label{ml01}
\end{align}
where $h_{l_j}^i \assign \max\{ h_T: T \subseteq \supp \phi_{l_j}^i\}$.
The assumed {shape-regularity} of the mesh implies that ${h_{l_j}^i/d_{l_j}^i}$ is bounded below by a positive constant,
and we have $k h_{l_j}^i \le kh$; so, for small enough $hk$, the right-hand side of \eqref{ml01}
is strictly positive for all $j\in\{1,\ldots,s_i+1\}$.
Since $\supp\phi_{l_j}^i\cap\supp\phi_{l_{j'}}^i$ has measure zero for $j\ne j'$, 
we also have $b_{i}(\phi_{l_j}^i,\phi_{l_{j'}}^i)=0$
for $j\ne j'$ and hence $b_{i}(\psi^i,\psi^i) = \sum_{j=1}^{s_i+1} (\beta_j^i)^2 b_i(\phi_{l_j}^i,\phi^i_{l_j}) > 0$.

Now, if $\lambda_m^i\le0$ for all $m=1,\ldots,n_i-s_i$, then, by \eqref{ml02} and \eqref{orth_b}, 
\[
	b_{i}(\psi^i,\psi^i) = \sum_{m=1}^{n_i-s_i}\lambda_m^i (\widetilde{\alpha}_m^i)^2 \le 0,
\]
which is a contradiction.
\end{proof}

Our next main result is Theorem~\ref{est_below}, which gives an estimate from below 
for the minimum eigenvalue $\lambda_1^i$ of \eqref{Ivan11a}, needed in 
the proof of Theorem~\ref{Lemma_3_1}.  
To prepare for this, we choose a complement of $\ker c_i$ as follows:
\begin{equation}\label{choice_of_complement}
	\whV_i \assign \mathrm{span} \bigl\{\phi_l^i: l \in \dof(\Omega_i^c)\bigr\} = V_{i} \subseteq H^1_0(\Omega_i^c),
\end{equation}
which is $(n_i-s_i)$-dimensional and satisfies $\whV_i\cap\ker c_i=\{0\}$ by \eqref{ker_scC};
hence the decomposition
\begin{equation} \label{decomp}
	\wV_i = \whV_i \oplus \ker c_i
\end{equation}
holds, and the assumptions of Lemma~\ref{cpd} are fulfilled.

\begin{theorem}[Estimate of $\lambda_1^i$ from below] \label{est_below}
Let $(hk)^2 \le C_1$ with $C_1$ as in Lemma~\ref{PBP_PD}.  Then
\begin{equation} \label{eigest2}
	\lambda_1^i \gtrsim -\Ceig \qquad \text{for all} \;\; i\in\{1,\ldots,N\}
\end{equation} 
with 
\[
	\Ceig = \Lambda_c^{2}\nmin^{-1}(\amax\Cinvt+2)^2.
\]
\end{theorem}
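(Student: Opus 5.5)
The plan is to bound $\lambda_1^i$ through the Rayleigh quotient of its eigenvector, using the structure of eigenvectors from the proof of Theorem~\ref{lem_ind_ev}. The quotient's numerator $b_i$ can only be negative through the $L^2$ term, and its denominator $c_i$ controls a weighted $L^2$ norm. It therefore suffices to show that the $L^2$ norm of any eigenvector is controlled by the $L^2$ norm of its image under $\Xi_i^c$.

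\textbf{Reduction to a Rayleigh quotient.} Let $p=p_1^i$. By Theorem~\ref{lem_ind_ev} with the complement \eqref{choice_of_complement}, $p=(I+\scS)\whw$ with $\whw\in\whV_i=V_i$ and $\scS\whw\in\ker c_i$. Moreover $\lambda_1^i = b_i(p,p)/c_i(p,p)$, using \eqref{orth_b} and \eqref{orth_c} with $j=j'=1$. Since $\scS\whw\in\ker c_i$, the characterisation \eqref{ker_scC} shows it has no interior degrees of freedom, so $\Xi_i^c(p)=\Xi_i^c(\whw)$. Using $\amin=1$, $\nmax=1$ and $n\ge\nmin$, we get
\[
b_i(p,p)\ge -k^2\|p\|_{\Omega_i^c}^2, \qquad c_i(p,p)\ge k^2\nmin\|\Xi_i^c(\whw)\|_{\Omega_i^c}^2 \gtrsim k^2\nmin\Lambda_c^{-2}\|\whw\|_{\Omega_i^c}^2,
\]
where the last step uses \eqref{Xiest2}, valid since $\whw\in V_i$. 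It then remains to show
\[
\|p\|_{\Omega_i^c}\le(\amax\Cinvt+2)\|\whw\|_{\Omega_i^c}.
\]
This gives $\lambda_1^i\gtrsim -\Lambda_c^2\nmin^{-1}(\amax\Cinvt+2)^2$. If $\lambda_1^i\ge0$ there is nothing to prove, so we may assume $b_i(p,p)<0$.

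\textbf{Key step: bounding $\scS\whw$ in $L^2$.} This is the main obstacle, since $\scS$ is defined only implicitly through \eqref{Ivan1c1}. Set $w_\txtc=\scS\whw$. Combining \eqref{Ivan1b1} with the coercivity \eqref{pdb1} of Lemma~\ref{PBP_PD} gives
\[
C_1h^{-2}\|w_\txtc\|^2\le b_i(w_\txtc,w_\txtc)=-b_i(\whw,w_\txtc)\le \amax\|\nabla\whw\|\,\|\nabla w_\txtc\|+k^2\|\whw\|\,\|w_\txtc\|,
\]
with all norms on $\Omega_i^c$. Applying the inverse estimate \eqref{inv_est} elementwise and summing over elements, $\|\nabla v\|\le\Cinv k\|v\|$ for $v\in\wV_i$. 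This bounds the right-hand side by $(\amax\Cinvt+1)k^2\|\whw\|\,\|w_\txtc\|$. Dividing through and using $(hk)^2\le C_1$ yields $\|w_\txtc\|\le(\amax\Cinvt+1)\|\whw\|$.

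\textbf{Conclusion.} The triangle inequality gives $\|p\|\le\|\whw\|+\|w_\txtc\|\le(\amax\Cinvt+2)\|\whw\|$. Inserting this into the Rayleigh quotient bound gives \eqref{eigest2} with the stated $\Ceig$. The only hidden constant comes from \eqref{Xiest2}, which depends solely on $\shapereg$. The delicate point to check is that the inverse estimate applies to both $\whw$ and the kernel element $w_\txtc$. It does, since both lie in $\wV_i$, i.e.\ restrictions of $V^h$, and \eqref{inv_est} is elementwise.
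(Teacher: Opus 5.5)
Your proof is correct and follows the paper's argument. It uses the same three ingredients: the bound $\|\scS\whw\|_{\Omega_i^c}\le(\amax\Cinvt+1)\|\whw\|_{\Omega_i^c}$ obtained from the coercivity \eqref{pdb1} on $\ker c_i$ and the inverse estimate, the lower bound on $c_i$ via \eqref{Xiest2}, and the crude bound $b_i(p,p)\ge -k^2\|p\|_{\Omega_i^c}^2$. The only difference is cosmetic: you evaluate the quotient directly at the eigenvector $p_1^i=(I+\scS)\whw$, using $\Xi_i^c(\scS\whw)=0$, whereas the paper applies Rayleigh's principle to the Schur-complement form \eqref{Ivan31}; this is equivalent, and your version avoids the paper's slightly roundabout min-over-min step.
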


\begin{proof}\,
We recall from equation \eqref{Ivan1e} that $\lambda_1^i$ is the minimum eigenvalue of the problem:
find $\lambda^i \in \mathbb{R}$ and $\whp^i \in \whV_i\backslash \{0\}$ such that
\begin{equation} \label{Ivan31_old} 
	\widetilde{b}_i (\whp^i, \whv^i) \assign b_i\bigl((I + \scS_i ) \whp^i, \whv^i\bigr) 
	= \lambda^i c_i(\whp^i, \whv^i) \qquad \text{for all} \;\;  \whv^i\in \whV_i,  
\end{equation}
where $\scS_i: \whV_i \rightarrow \ker c_i$ is the operator defined by
\begin{equation} \label{Ivan31aa}  
	b_i(\scS_i \whv^i, {w_{c_i}}) = - b_i(\whv^i, {w_{c_i}}) \qquad \text{for all} \;\; {w_{c_i}} \in \ker c_i.
\end{equation}
Using \eqref{Ivan31aa} and the fact that $\scS_i$ maps into $\ker c_i$, we have that \eqref{Ivan31_old} is equivalent to
\begin{equation} \label{Ivan31} 
	b_i\bigl((I + \scS_i ) \whp^i, (I + \scS_i) \whv^i\bigr)
	= \lambda^i c_i(\whp^i, \whv^i) \qquad \text{for all} \;\; \whv^i\in \whV_i.
\end{equation}
Since, by Lemma~\ref{cpd}, $c_i$ is positive definite on $\whV_i$, we obtain from Rayleigh's principle that
\begin{equation} \label{Rayleigh} 
	\lambda_1^i 
	= \min_{0 \ne {\whv^i} \in \whV_i}\frac{{b_i\bigl((I + \scS_i ) \whv^i, (I + \scS_i) \whv^i\bigr)}}{c_{i} (\whv^i,\whv^i)}.
\end{equation} 
To estimate \eqref{Rayleigh} from below, we first derive an estimate from above for $\Vert \scS_i\whv^i \Vert_{\Omega_i^c}$ 
for any $ \whv^i \in \whV_i$.  Using \eqref{pdb1}, the definition \eqref{Ivan31aa} of $\scS_i$, 
the fact that $\nmax = 1$ and \eqref{inv_est}, we have, 
for any $\whv^i \in \whV_i$,     
\begin{align}
	C_1 h^{-2} \Vert \scS_i \whv^i  \Vert_{\Omega_i^c}^2 
	&\le b_i(\scS_i \whv^i ,\scS_i \whv^i ) 
	= -b_i(\whv^i ,\scS_i \whv^i ) 
	\nonumber \\[0.5ex]
	&\le \Vert \nabla(\scS_i \whv^i  )\Vert_{\Omega_i^c} \Vert \nabla \whv^i \Vert_{\Omega_i^c} 
	+ k^2 \Vert \scS_i  \whv^i \Vert_{\Omega_i^c} \Vert   \whv^i \Vert_{\Omega_i^c} 
	\nonumber \\[0.5ex]
	& \le (\amax\Cinvt + 1)k^2 \Vert \scS_i \whv^i \Vert_{\Omega_i^c} \Vert  \whv^i  \Vert_{\Omega_i^c}, 
	\label{Ivan31a}
\end{align} 
and thus
\begin{equation} \label{Ivan32} 
	\|\scS_i\whv^i\|_{\Omega_i^c} \le (\amax\Cinvt + 1) C_1^{-1} (hk)^2 \Vert \whv^i \Vert_{\Omega_i^c} 
	\le (\amax\Cinvt + 1)\Vert \whv^i \Vert_{\Omega_i^c}. 
\end{equation}
As a result, for the numerator in~\eqref{Rayleigh}, we have
\begin{equation} \label{Ivan34}
	b_i\bigl((I+\scS_i)\whv^i , (I+\scS_i)\whv^i\bigr)
	\ge -k^2 \|(I+\scS_i)\whv^i\|_{\Omega_i^c}^2 
	\ge -k^2 (\amax\Cinvt + 2)^2 \|\whv^i\|_{\Omega_i^c}^2.
\end{equation} 
For the denominator note that $\whV_i=V_i$ by \eqref{choice_of_complement}; hence Lemma~\ref{Xi_below} implies that
\begin{equation} 
	c_i(\whv^i, \whv^i) \ge k^2 \nmin\Vert \Xi_i^c( \whv^i )\Vert_{\Omega_i^c}^2  
	\gtrsim  k^2 \nmin\Lambda_c^{-2} \Vert \whv^i \Vert_{\Omega_i^c}^2.
	\label{Ivan35}
\end{equation}
For the rest of this proof (only) let $\mathbf{min}$ denote the  minimum of any quantity over all
$0 \ne {\whw^i} \in \whV_i $.  Recall also that $c_i$ is positive definite on $\whV_i$. 
If $\bfmin\widetilde{b}_i(\whw^i,\whw^i)\ge0$, then $\lambda_1^i\ge0$ and \eqref{eigest2} is trivially true.
Now, assume that $\bfmin \widetilde{b}_i(\whw^i,\whw^i) < 0$.  Then, for all $0 \ne \whv^i \in \whV_i$,   
\[
	\frac{\widetilde{b}_i(\whv^i, \whv^i)}{c_i(\whv^i, \whv^i)} 
	\ge \frac{\bfmin \widetilde{b}_i(\whw^i, \whw^i)}{c_i(\whv^i, \whv^i)} 
	\ge \frac{\bfmin \widetilde{b}_i(\whw^i, \whw^i)}{\bfmin{c_i(\whw^i, \whw^i)}}
	= \frac{\bfmin {b}_i((I + \scS_i)\whw^i, (I + \scS_i)\whw^i)}{\bfmin{c_i(\whw^i, \whw^i)}},
\]
and so, by \eqref{Ivan34}, \eqref{Ivan35}, for all $0 \ne \whv^i \in \whV_i$, 
\[
	\frac{ \widetilde{b}_i( \whv^i, \whv^i )}{c_i(\whv^i, \whv^i)} 
	\gtrsim - \Lambda_c^{2}\nmin^{-1}(\amax\Cinvt+2)^2,
\]
which proves \eqref{eigest2} also in this case.
\end{proof}

The next theorem is the fundamental result describing the approximation power of the $H_k$-GenEO space.  
It constitutes a substantial generalisation of previous results in \cite[Lemma~3.1]{Bootland:2022:OSM}, 
\cite[Lemma~2.11]{Spillane:2014:ARC} and \cite[Lemma~3.14]{Bastian:2021:MSD} since here we study the indefinite 
GEVP \eqref{eq:  5_12}.

\begin{theorem}
\label{Lemma_3_1}
Suppose that $(kh)^2 \le C_1$ with $C_1$ as in Lemma~\ref{PBP_PD}.
Recalling Assumption \ref{ass:further}, define the 'local projector' $\Pi_{m_i}$ by
\begin{equation}
\label{eq:proj}
	\Pi_{m_i} v \assign \sum_{m=1}^{m_i} c_i(v, p^i_m)  p_m^i, \qquad v \in \wV_i. 
\end{equation}
Let $\Cstar$ be as in \eqref{Cstar}.  There exists a constant $C_2>0$, which is independent of all key parameters,
such that, for any $v \in \wV_i$ and $w \assign v-\Pi_{m_i} v$, 
\begin{align}
	& 0 \le b_{i}(w,w) \le C_2^2 \amax \nmin^{-1} \Cstar \Vert v \Vert_{1,k,\Omega_i^c}^2,
	\label{new_1}
	\\[0.5ex]
	& c_{i}(w,w) \le (\lambda_{m_i+1}^i)^{-1} b_{i}(w,w).
	\label{new_2}
\end{align}
\end{theorem}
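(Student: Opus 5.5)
The plan is to work entirely with the eigen-expansion of Corollary~\ref{lem_v_abstract2}, applied with $\txtb = b_i$ and $\txtc = c_i$; Remark~\ref{rem:apply} and Lemma~\ref{PBP_PD} justify this, given $(kh)^2\le C_1$. For $v\in\wV_i$, write
\[
v = v_0 + \sum_{m=1}^{n_i-s_i}\alpha_m p_m^i, \qquad \alpha_m = c_i(v,p_m^i), \qquad v_0\in\ker c_i .
\]
Then $w = v - \Pi_{m_i}v = v_0 + \sum_{m>m_i}\alpha_m p^i_m$. By Assumption~\ref{ass:further}, $m_i$ satisfies $\lambda^i_{m_i+1}>0$. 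Hence \eqref{new_2} is exactly Proposition~\ref{lem_abs_proj}. The lower bound $b_i(w,w)\ge 0$ is the first inequality in \eqref{Ivan281}: there $b_i(w,w) = b_i(v_0,v_0)+\sum_{m>m_i}\lambda_m^i\alpha_m^2$, with every term nonnegative.

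For the upper bound in \eqref{new_1}, I will not estimate $b_i(w,w)$ directly, since $b_i(v_0,v_0)$ is hard to control. Instead I compare with $b_i(v,v)$. By \eqref{orth_b}, we have $b_i(p_m^i,v_0)=\lambda_m^i c_i(p_m^i,v_0)=0$, and the $p_m^i$ are $b_i$-orthogonal. Therefore
\[
b_i(v,v) = b_i(v_0,v_0) + \sum_{m=1}^{n_i-s_i}\lambda_m^i\alpha_m^2,
\qquad\text{so}\qquad
b_i(w,w) = b_i(v,v) - \sum_{m=1}^{m_i}\lambda_m^i\alpha_m^2 .
\]
Dropping the terms with $\lambda_m^i\ge0$ and using $\lambda_m^i\ge\lambda_1^i\gtrsim-\Ceig$ from Theorem~\ref{est_below} gives
\[
b_i(w,w)\le b_i(v,v)+\Ceig\sum_m\alpha_m^2 .
\]
The same orthogonality argument as in \eqref{Ivan282} gives $\sum_m\alpha_m^2 = c_i(v,v)$, since $c_i(v_0,\cdot)=0$. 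Together with \eqref{lem:est_b} this yields
\[
b_i(w,w)\lesssim \|v\|_{1,k,\Omega_i^c}^2 + \Ceig\, c_i(v,v).
\]

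It remains to bound $c_i(v,v)=\|\Xi_i^c(v)\|_{1,k,\Omega_i^c}^2$ by $\|v\|_{1,k,\Omega_i^c}^2$; this is where $\Cinvt$ and $\amax\nmin^{-1}$ enter. Since $\Xi_i^c(v)$ is a finite element function, the inverse estimate \eqref{inv_est} applies elementwise together with $A\le\amax$. This gives $|\Xi_i^c(v)|_{a,\Omega_i^c}^2\le\amax\Cinvt k^2\|\Xi_i^c(v)\|_{\Omega_i^c}^2$. Then \eqref{Xiest1}, which holds on all of $\wV_i$, and $\nmax=1$ give
\[
c_i(v,v)\lesssim(\amax\Cinvt+1)\,k^2\|v\|_{\Omega_i^c}^2\le 2\amax\Cinvt\,\nmin^{-1}k^2\|v\|_{n,\Omega_i^c}^2\le 2\amax\Cinvt\nmin^{-1}\|v\|_{1,k,\Omega_i^c}^2 .
\]
Combining, and using $\Ceig,\Cinv,\amax,\nmin^{-1}\ge1$ to absorb the first term, gives $b_i(w,w)\lesssim \amax\nmin^{-1}\Ceig\Cinvt\|v\|_{1,k,\Omega_i^c}^2=\amax\nmin^{-1}\Cstar\|v\|_{1,k,\Omega_i^c}^2$. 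We then take $C_2^2$ to be the hidden constant.

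The main obstacle is the step that discards $b_i(v_0,v_0)$, a term of order $h^{-2}$ that cannot be bounded by $\|v\|_{1,k}$ without losing powers of $kh$. Expressing $b_i(w,w)$ as $b_i(v,v)$ minus the retained modes avoids this completely, but it relies on the exact orthogonality relations \eqref{orth_b} including the kernel component. Beyond that, the only care needed is to track that the hidden constants in Theorem~\ref{est_below} and Lemma~\ref{Xi_below} depend only on $\shapereg$.
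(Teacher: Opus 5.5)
Your proposal is correct and takes essentially the same route as the paper. In both, \eqref{new_2} and the lower bound come from Proposition~\ref{lem_abs_proj} and the eigen-expansion of $w$. The upper bound comes from $b_i(w,w)=b_i(v,v)-\sum_{m\le m_i}\lambda_m^i\alpha_m^2$ (the paper obtains this via $b_i(w,\Pi_{m_i}v)=0$), with the subtracted sum bounded by $\Ceig\,c_i(v,v)$ through Theorem~\ref{est_below} and $c_i(v,v)$ then controlled by the inverse estimate and \eqref{Xiest1}. Your explicit use of $k^2\|v\|_{\Omega_i^c}^2\le\nmin^{-1}\|v\|_{1,k,\Omega_i^c}^2$ spells out a step the paper leaves implicit.
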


\begin{proof}\, 
The inequality in \eqref{new_2} follows directly from \eqref{Ivan7} of Proposition~\ref{lem_abs_proj}.
Set $\alpha_m^i \assign c_{i}(v,p_m^i)$.
Then $w = v_0 + \sum_{m = m_i+1}^{n_i-s_i} \alpha_m^i p_m^i$ 
with $v_0 \in \linspan\{ p_{n_i-s_1+1}\ldots , p_{n_i}\} = \ker c_i$.  Hence using property \eqref{orth_b},
then Lemma~\ref{PBP_PD} and the fact that $\lambda_m^i > 0$ for all $m \in \{m_i +1, \ldots n_i-s_i\}$, we obtain  
\begin{align*}
	b_{i} (w,w) &= b_{i} (v_0,v_0) + 2 \sum_{m= m_i+1}^{n_i-s_i} \alpha_m^i b_{i} (p_m^i,v_0) 
	+ \sum_{m,m'= m_i+1}^{n_i-s_i} \alpha_m^i \alpha_{m'}^i  b_{i} (p_m^i,p_{m'}^i)
	\\
	&= b_{i} (v_0,v_0) +  \sum_{m = m_i+1}^{n_i-s_i} (\alpha_m^i)^2 \lambda_m^i 
	\ge  0,
\end{align*}   
i.e.\ the left-hand inequality in \eqref{new_1}. 
To obtain the {right-hand}  inequality in \eqref{new_1}, note that $b_{i}(w,\Pi_{m_i}v) = 0$, and hence 
\begin{equation}\label{Lem341}
	b_{i}\!\left( w , w \right)  =  b_{i}(v,v)  - b_{i}( \Pi_{m_i}  v,\Pi_{m_i}  v) 
	= a_{\Omega_i^c}(v,v) - k^2 \|v\|_{n,\Omega_i^c}^2 - b_{i}( \Pi_{m_i}  v, \Pi_{m_i}  v). 
\end{equation}
To estimate the last term in \eqref{Lem341} we use \eqref{orth_b}, \eqref{Ivan11a},   
\eqref{eigest2} and then \eqref{orth_c} to obtain     
\begin{align}
	-b_{i}\!\left( \Pi_{m_i} v, \Pi_{m_i} v \right) 
	&= -  \sum_{m=1}^{m_i} (\alpha_m^i)^2 \, \lambda_m^i 
	\le -\lambda_1^i \sum_{m=1}^{m_i}(\alpha_m^i)^2
	\nonumber\\
	&\lesssim \Ceig \sum_{m=1}^{m_i} (\alpha_m^i)^2
	\le \Ceig\sum_{m=1}^{n_i-s_i} (\alpha_m^i)^2 
	= \Ceig c_{i}(v,v). 
	\label{Lem342}
\end{align}
Hence combining \eqref{Lem341} and \eqref{Lem342} we obtain 
\begin{equation}
\label{Lem343}
	b_i(w,w) \lesssim a_{\Omega_i^c}(v,v) + \Ceig\, c_i(v,v).
\end{equation}
To conclude, we now use \eqref{inv_est}, the fact that $\amax \ge 1$ and $\Cinv\ge 1$,
and \eqref{Xiest1} to obtain
\begin{align}
	c_i(v,v)  &\le \amax \|\nabla \Xi_i^c(v)\|_{\Omega_i^c}^2 + k^2 \|\Xi_i^c(v)\|_{\Omega_i^c}^2
	\nonumber\\
	&\le (\amax\Cinvt + 1)k^2 \Vert \Xi_i^c(v)\Vert_{\Omega_i^c}^2 
	\lesssim (\amax  \Cinvt    + 1 ) k^2\Vert v \Vert_{\Omega_i^c}^2 \lesssim \amax \Cinvt k^2  \Vert v \Vert_{\Omega_i^c}^2,  \label{hidden}
\end{align}
and the proof follows on combining \eqref{hidden} with \eqref{Lem343}, using $\Ceig\ge1$ and recalling Notation~\ref{not:forms}.
\end{proof}
 
From Theorem~\ref{Lemma_3_1} it is now possible to build a global approximation property.

\begin{lemma}[Global approximation property]
\label{Lemma_3_2}
Under the same conditions as in Theorem~\ref{Lemma_3_1}, define $\Theta$ as in \eqref{notation:theta}
and let $\Cstar$ and $C_2$ be as in \eqref{Cstar} and Theorem~\ref{Lemma_3_1} respectively.
Further, let $v \in V^h$ and {set}
\begin{equation}\label{def_z_0_z_i}
	z_0 \assign \sum_{i=1}^N E_i^c \Xi^c_i \bigl(\Pi_{m_i} v|_{\Omega_i^c}\bigr).
\end{equation}
Then, $z_0 \in  V_0$ and
\begin{equation}\label{inequ_v_z_0}
	\inf_{z\in V_0}\|v-z\|_{1,k}^2 \le \|v-z_0\|_{1,k}^2 \le C_2^2\,\Lambda_c^2 \amax \nmin^{-1} \Cstar
	\Theta\|v\|^2_{1,k}. 
\end{equation}
\end{lemma}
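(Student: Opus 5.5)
The argument combines the partition of unity \eqref{partition_of_unity} with the local estimates of Theorem~\ref{Lemma_3_1}. The first inequality in \eqref{inequ_v_z_0} is trivial once $z_0 \in V_0$ is known.

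\textbf{Step 1: $z_0 \in V_0$.} By \eqref{eq:proj}, $\Pi_{m_i} v|_{\Omega_i^c}$ is a linear combination of $p_1^i,\ldots,p_{m_i}^i$. Since $\Xi_i^c$ and $E_i^c$ are linear, $E_i^c\Xi_i^c(\Pi_{m_i}v|_{\Omega_i^c})$ lies in the span of $\{E_i^c\Xi_i^c(p_m^i): m \le m_i\}$. Summing over $i$ then gives $z_0 \in V_0$ by Definition~\ref{def: 5_1}.

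\textbf{Step 2: localise the error.} Apply \eqref{partition_of_unity} to $v$ and use linearity to write
\[
v - z_0 = \sum_{i=1}^N E_i^c \Xi_i^c(w_i), \qquad w_i \assign v|_{\Omega_i^c} - \Pi_{m_i}(v|_{\Omega_i^c}).
\]
Each summand lies in $H^1_0(\Omega)$ with support in $\overline{\Omega_i^c}$, so \eqref{overlap2} gives
\[
\|v-z_0\|_{1,k}^2 \le \Lambda_c \sum_{i=1}^N \|\Xi_i^c(w_i)\|_{1,k,\Omega_i^c}^2 = \Lambda_c \sum_{i=1}^N c_i(w_i,w_i),
\]
where the last equality is the definition \eqref{Ivan14}.

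\textbf{Step 3: local bounds.} For each $i$, \eqref{new_2} together with $\lambda^i_{m_i+1} \ge \tau = \Theta^{-1}$ gives $c_i(w_i,w_i) \le \Theta\, b_i(w_i,w_i)$. Then \eqref{new_1} gives
\[
c_i(w_i,w_i) \le \Theta\, C_2^2 \amax\nmin^{-1}\Cstar \|v\|_{1,k,\Omega_i^c}^2.
\]

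\textbf{Step 4: sum over $i$.} Summing and using the first inequality in \eqref{overlap}, $\sum_i \|v|_{\Omega_i^c}\|_{1,k,\Omega_i^c}^2 \le \Lambda_c\|v\|_{1,k}^2$, yields the factor $\Lambda_c^2$ in \eqref{inequ_v_z_0}.

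There is no real obstacle here. The point needing care is that the error must be measured in $c_i$, which is exactly the norm of the partition-of-unity pieces, so no additional stability estimate for $\Xi_i^c$ is required. One must also check that the constant $C_2$ from Theorem~\ref{Lemma_3_1} absorbs the hidden constant in \eqref{new_1}, so that the estimate holds as stated with $C_2^2$ rather than merely up to $\lesssim$.
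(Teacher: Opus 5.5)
Your proposal is correct and follows the paper's proof step for step. Linearity gives $z_0\in V_0$; then \eqref{partition_of_unity} and \eqref{overlap2} reduce $\|v-z_0\|_{1,k}^2$ to $\Lambda_c\sum_i c_i(w_i,w_i)$, and \eqref{new_2}, \eqref{new_1}, $\lambda^i_{m_i+1}\ge\Theta^{-1}$ and \eqref{overlap} produce the factor $C_2^2\Lambda_c^2\amax\nmin^{-1}\Cstar\Theta$. Your closing worry about constants is moot, since \eqref{new_1} is already stated with the explicit constant $C_2^2$, and its nonnegativity part $b_i(w_i,w_i)\ge0$ is what justifies replacing $1/\lambda^i_{m_i+1}$ by $\Theta$.
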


\begin{proof}\,
Since $\Pi_{m_i}^c v|_{\Omega_i^c}\in\linspan\{p_1^i,\ldots,p_{m_i}^i\}$, it is clear that $z_0 \in V_0$,
which also proves the first inequality in \eqref{inequ_v_z_0}.
Using \eqref{partition_of_unity}, \eqref{overlap2}, \eqref{new_2} and  \eqref{new_1} we obtain
\begin{align}
	\|v-z_0\|_{1,k}^2  
	&= \Bigg\| \sum_{i=1}^N  E_i^c (\Xi_i^c (v|_{\Omega_i^c}))-\sum_{i=1}^N  E^c_i \Xi^c_i \bigl(\Pi_{m_i} {v|_{\Omega_i^c}}\bigr) \Bigg\|_{1,k}^2
	\\ 
	&\le \Lambda_c \sum_{i=1}^N \left \|\Xi_i^c \left(v|_{\Omega_i^c}- \Pi_{m_i} {v|_{\Omega_i^c}} \right) \right\|_{1,k,{\Omega_i^c}}^2 \nonumber  = \Lambda_c \sum_{i=1}^N  c_{i}\Bigl(  \bigl({v|_{\Omega_i^c}}-\Pi_{m_i}{v|_{\Omega_i^c}}\bigr), \bigl({v|_{\Omega_i^c}}-\Pi_{m_i}{v|_{\Omega_i^c}}\bigr)\Bigr)
	\nonumber \\
	&\le \Lambda_c \sum_{i=1}^N \frac{1}{\lambda_{m_i+1}^i} b_{i}\Bigl(\bigl({v|_{\Omega_i^c}}-\Pi_{m_i}{v|_{\Omega_i^c}}\bigr),\bigl({v|_{\Omega_i^c}}-\Pi_{m_i}{v|_{\Omega_i^c}}\bigr)\Bigr) 
	\nonumber \\
	&\le C_2^2 \Lambda_c \amax \nmin^{-1}\Cstar \sum_{i=1}^N\frac{1}{\lambda_{m_i+1}^i}\big\|v|_{\Omega_i^c}\big\|_{1,k,\Omega_i^c}^2  
	\le C_2^2 \Lambda_c \amax \nmin^{-1}\Cstar \Theta\sum_{i=1}^N\big\|v|_{\Omega_i^c}\big\|_{1,k,\Omega_i^c}^2, 
	\label{proof_inequ_sum_z_i}
\end{align}
and the result follows from \eqref{overlap}.
\end{proof}

The next lemma shows that the spaces $V_0$ and $V_{\Omega_j^\ell}$ provide a stable decomposition of $V^h$.

\begin{lemma}[Stable decomposition]
\label{lemma_3_3}
Under the same conditions as in Theorem~\ref{Lemma_3_1}, let $v \in V^h$ and define $z_0 \in V_0$ as in~\eqref{def_z_0_z_i}. 
Then there exists a constant $C_3\ge1$, which is independent of all parameters, such that,
for each $j = 1,\ldots,Q$, there exist $z_j \in V_{\Omega_j^\ell}$ so that
\[
	v = z_0 + \sum_{j=1}^Q E_j^\ell z_j^\ell \quad \text{ and } \quad \|z_0\|_{1,k}^2 + \sum_{j=1}^Q \|z_j^\ell\|_{1,k,\Omega^\ell_j}^2 
	\le C_3 \bigl(1 + \Lambda_\ell \Lambda_c^2 \amax \nmin^{-1} \Cstar \Theta \bigr) \| v \|_{1,k}^2.
\]
\end{lemma}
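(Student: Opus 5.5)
The plan is the standard two-level stable-decomposition argument. We take $z_0$ from \eqref{def_z_0_z_i} and split the remainder $v - z_0$ among the local subdomains with a partition of unity subordinate to the local cover. The estimate for $z_0$ follows from Lemma~\ref{Lemma_3_2}, and the estimate for the local pieces uses the overlap bound $\delta_\ell \ge k^{-1}$ from Assumption~\ref{ass:overlap}.

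First we bound $z_0$ itself. By the triangle inequality and Lemma~\ref{Lemma_3_2},
\[
	\|z_0\|_{1,k}^2 \le 2\|v\|_{1,k}^2 + 2\|v-z_0\|_{1,k}^2 \le 2\bigl(1 + C_2^2\Lambda_c^2\amax\nmin^{-1}\Cstar\Theta\bigr)\|v\|_{1,k}^2 .
\]
This already has the required form, since $\Lambda_\ell \ge 1$.

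Next set $w \assign v - z_0 \in V^h$. Choose a smooth partition of unity $\{\chi_j\}_{j=1}^Q$ subordinate to $\{\Omega_j^\ell\}$ with $0\le\chi_j\le1$, $\sum_j\chi_j = 1$ and $\|\nabla\chi_j\|_\infty \lesssim \delta_\ell^{-1} \le k$. Define $z_j^\ell \assign I_h(\chi_j w)|_{\Omega_j^\ell} \in V_{\Omega_j^\ell}$, where $I_h$ is the nodal Lagrange interpolant. Because $I_h$ is linear and $\sum_j \chi_j = 1$, we get $\sum_j E_j^\ell z_j^\ell = I_h w = w$, which gives the decomposition $v = z_0 + \sum_j E_j^\ell z_j^\ell$. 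For the local norms we use the standard superapproximation/stability estimate for the interpolant of $\chi_j w$ when $w$ is a finite element function:
\[
	\|I_h(\chi_j w)\|_{1,k,\Omega_j^\ell} \lesssim \|\chi_j w\|_{1,k,\Omega_j^\ell} \lesssim \amax^{1/2}\bigl(|w|_{a,\Omega_j^\ell} + \delta_\ell^{-1}\|w\|_{\Omega_j^\ell}\bigr) + k\|w\|_{\Omega_j^\ell}.
\]
Since $\delta_\ell^{-1}\le k$, the right-hand side is bounded by $(\amax\nmin^{-1})^{1/2}\|w\|_{1,k,\Omega_j^\ell}$, using $\|w\|_{\Omega_j^\ell}\le \nmin^{-1/2}\|w\|_{n,\Omega_j^\ell}$.

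The main obstacle is the constant in this local step. The superapproximation argument (see, e.g., \cite{Graham:2020:DDI} or \cite{Toselli:2005:DDM}) must be done carefully so that the hidden constant depends only on $\shapereg$ and $r$. The factor $\amax\nmin^{-1}$ it produces must be absorbed into the factor $\amax\nmin^{-1}\Cstar$ that already multiplies $\Theta$. If that absorption fails, the cheap fallback is to accept an extra $\amax\nmin^{-1}$ factor inside $C_3$.

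Finally we sum over $j$. By \eqref{overlap}, $\sum_j\|w\|_{1,k,\Omega_j^\ell}^2 \le \Lambda_\ell\|w\|_{1,k}^2$, and by Lemma~\ref{Lemma_3_2}, $\|w\|_{1,k}^2 \le C_2^2\Lambda_c^2\amax\nmin^{-1}\Cstar\Theta\|v\|_{1,k}^2$. This gives
\[
	\sum_{j=1}^Q\|z_j^\ell\|_{1,k,\Omega_j^\ell}^2 \lesssim \Lambda_\ell\Lambda_c^2\amax\nmin^{-1}\Cstar\Theta\|v\|_{1,k}^2 .
\]
Adding the bound for $\|z_0\|_{1,k}^2$ and collecting $C_2^2$ and the interpolation constants into $C_3\ge1$ yields the claimed estimate.
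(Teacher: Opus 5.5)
Your construction follows the paper's proof step for step. You bound $z_0$ by the triangle inequality and Lemma~\ref{Lemma_3_2}, split $w=v-z_0$ with a partition of unity subordinate to the local cover using $\delta_\ell\ge k^{-1}$, and sum the pieces with \eqref{overlap} and Lemma~\ref{Lemma_3_2}. Your interpolant $I_h$ is in fact an improvement: the paper takes $z_j^\ell=\Phi_j^\ell(v-z_0)$, which is not a finite element function.

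The gap is in your last display. Your local estimate gives $\|z_j^\ell\|_{1,k,\Omega_j^\ell}^2\lesssim\amax\nmin^{-1}\|w\|_{1,k,\Omega_j^\ell}^2$, but the sum you write down has silently lost the factor $\amax\nmin^{-1}$. Neither remedy you offer is admissible. First, $(\amax\nmin^{-1})^2\Cstar$ is not bounded by a constant multiple of $\amax\nmin^{-1}\Cstar$. Second, $C_3$ must be independent of all key parameters, and by Notation~\ref{notation:sim} these include $\amax$ and $\nmin$. What your argument actually proves is the estimate with $\Lambda_\ell\Lambda_c^2(\amax\nmin^{-1})^2\Cstar\Theta$ in place of $\Lambda_\ell\Lambda_c^2\amax\nmin^{-1}\Cstar\Theta$.

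The paper crosses this step with \eqref{multPOU}, i.e.\ $\sum_j\|\Phi_j^\ell w\|_{1,k,\Omega_j^\ell}^2\lesssim\Lambda_\ell\bigl(1+(k\delta_\ell)^{-2}\bigr)\|w\|_{1,k}^2$ with a constant asserted to be coefficient-independent, followed by $k\delta_\ell\ge1$. Be aware, however, that in the $A$- and $n$-weighted norm this estimate hides exactly the factor you found. The cross term $\int w^2A\nabla\Phi_j^\ell\cdot\nabla\Phi_j^\ell$ is only bounded by $\amax\delta_\ell^{-2}\|w\|^2\lesssim\amax\nmin^{-1}(k\delta_\ell)^{-2}k^2\|w\|_n^2$, and this cannot be improved in general.

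To see this, take $A=\amax I$ and $n=\nmin$ on a subregion, and let $w$ be supported there and vary on a length scale much larger than $(\amax/\nmin)^{1/2}k^{-1}$, so that $\|w\|_{1,k}^2\approx\nmin k^2\|w\|^2$. Friedrichs' inequality on each $\Omega_j^\ell$, together with $\sum_j\|\Phi_j^\ell w\|^2\ge\Lambda_\ell^{-1}\|w\|^2$, gives $\sum_j a(\Phi_j^\ell w,\Phi_j^\ell w)\ge 2\Lambda_\ell^{-1}\amax H_\ell^{-2}\|w\|^2$. In the regime $H_\ell\lesssim k^{-1}$ required by the main theorem, this is $\gtrsim\Lambda_\ell^{-1}\amax k^2\|w\|^2$, i.e.\ $\gtrsim\Lambda_\ell^{-1}\amax\nmin^{-1}\|w\|_{1,k}^2$.

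So the obstacle you flagged is genuine, and the paper's proof does not resolve it either. Eliminating it would require using structure of $v-z_0$ beyond the norm bound of Lemma~\ref{Lemma_3_2}. Short of that, the honest conclusion of this line of argument (yours or the paper's) is the weaker bound carrying the extra factor $\amax\nmin^{-1}$.
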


\begin{proof}\,
We choose a partition of unity consisting of real-valued Lipschitz functions $\{\Phi_j^{\ell}\}_{j=1}^Q$ on $\Omega$  
satisfying $0 \le \Phi_j^{\ell} \le 1$, $\supp(\Phi_j^{\ell}) \subseteq \overline{\Omega_j^\ell}$ for each $j$,
and $\sum_{j=1}^Q \Phi_j^{\ell} \equiv 1$ on $\Omega$; see, e.g.\ \cite[\S 3.2]{Toselli:2005:DDM}. 
Then $\|\nabla \Phi_j^{\ell}\|_{L^\infty(\Omega)} \le \CPOU \delta_\ell^{-1}$ for all $j$,
where $\delta_\ell$ is the overlap parameter from Assumption~\ref{ass:overlap}. 
By \cite[estimate (3.2)]{Graham:2020:DDI} and then Assumption \ref{ass:overlap}, for any $v \in H^1_0(\Omega)$,
\begin{equation}\label{multPOU}
	\sum_{j=1}^Q \| \Phi_j^{\ell} v \|_{1,k,\Omega_j^\ell}^2 
	\lesssim \Lambda_\ell \left(1 + \frac{1}{(k{\delta_\ell})^2} \right) \|v\|_{1,k,\Omega}^2 \lesssim \Lambda_\ell  \|v\|_{1,k,\Omega}^2,
\end{equation}

Now define $z_j^\ell \assign \Phi_j^\ell(v - z_0) \in V_j^\ell$.  Then 
\[
	\sum_{j=1}^Q E_j^\ell z_j^\ell = \sum_{j=1}^Q E_j^\ell \Phi_j^\ell (v - z_0) = (v - z_0)\sum_{j=1}^Q {\Phi_j^\ell} = v - z_0. 
\]
Moreover, from Lemma~\ref{Lemma_3_2} we have 
\[
	\|z_0\|_{1,k}^2 \le 2\left(\|z_0 - v\|_{1,k}^2 + \|v\|_{1,k}^2\right) \lesssim
	(1 + \Lambda_c^2 \amax \nmin^{-1} \Cstar \Theta) \|v\|_{1,k}^2.
\]
Also, using \eqref{multPOU}, we obtain
\[
	\sum_{j=1}^Q \|z_j^\ell\|_{1,k,\Omega_j^\ell}^2 
	= \sum_{j=1}^Q \| \Phi_j^\ell (v - z_0)\|_{1,k,\Omega_j^\ell}^2 
	\lesssim \Lambda_\ell \|v - z_0\|_{1,k}^2.
\]
Combining this with Lemma~\ref{Lemma_3_2} and the bound for \(z_0\) we obtain the result.
\end{proof}

It is convenient to introduce here the orthogonal projections onto the local spaces ${P_j^\ell}: V^h \rightarrow V^\ell_j$, 
with respect to the inner product $(\cdot, \cdot)_{1,k}$, by requiring that
\begin{equation}
	(P_j^\ell v, w)_{1,k,\Omega^\ell_j} = (v, E_j^\ell w)_{1,k,\Omega} \qquad \text{for all}
	\;\; v \in V^h, \;\; w \in V_{\Omega_j^\ell},  \quad \text{and} \;\; j = 1, \ldots, Q 
	\label{eq: 3_10_nested}.
\end{equation}
Similarly, we  define the orthogonal projection $P_0 : V^h \rightarrow V_0$ by requiring 
\begin{equation}
	(P_0 v, w)_{1, k} = (v, w)_{1,k} =(v, E_0w)_{1,k} \qquad \text{for all} \;\; v \in V^h, \;\; w \in V_0. 
	\label{eq: 3_10_zero}
\end{equation}
Using these we then  define the operator $P: V^h \rightarrow V^h$ by
\begin{equation}
	P \assign E_0P_0 + \sum_{j=1}^{Q} {E_j^\ell} {P_j^\ell}. \label{eq: 3_11}
\end{equation} 
The operator $P$ is an (easier-to-analyse) proxy for the operator $T$ 
(which represents our preconditioned matrix --- see \eqref{eq: 2_26} and  Proposition \ref{prop: 2_6}). 
Here, we estimate the field of values of $P$, and then use this to estimate  the field of values of  $T$, 
leading to our main  result (Theorem~\ref{theorem: convergence}).

\begin{proposition}[Field of values of $P$] \label{prop:Puu}
Under the same assumptions as in Theorem~\ref{Lemma_3_1}, let $C_3$ be as in Lemma~\ref{lemma_3_3}.
Then 
\begin{equation}\label{eq: 3_12}
	\|v\|^2_{1,k} \le C_3\bigl(1 + \Lambda_\ell \Lambda_c^2 \amax \nmin^{-1} \Cstar \Theta \bigr) (Pv,v)_{1,k}
\end{equation}
for every $v\in V^h$.
\end{proposition}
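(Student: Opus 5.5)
This is the standard abstract Schwarz lower bound: combine the stable decomposition of Lemma~\ref{lemma_3_3} with the defining property of the orthogonal projections $P_0$ and $P_j^\ell$.

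First, fix $v\in V^h$ and take the decomposition $v = z_0 + \sum_{j=1}^Q E_j^\ell z_j^\ell$ from Lemma~\ref{lemma_3_3}, with $z_0\in V_0$ and $z_j^\ell\in V_{\Omega_j^\ell}$. Expand
\[
\|v\|_{1,k}^2 = (v, z_0)_{1,k} + \sum_{j=1}^Q (v, E_j^\ell z_j^\ell)_{1,k}.
\]
By \eqref{eq: 3_10_zero} and \eqref{eq: 3_10_nested}, each term becomes a local inner product:
\[
(v,z_0)_{1,k} = (P_0 v, z_0)_{1,k}, \qquad (v, E_j^\ell z_j^\ell)_{1,k} = (P_j^\ell v, z_j^\ell)_{1,k,\Omega_j^\ell}.
\]
Cauchy--Schwarz on each term and then on the sum gives
\[
\|v\|_{1,k}^2 \le \Bigl(\|P_0 v\|_{1,k}^2 + \sum_{j=1}^Q \|P_j^\ell v\|_{1,k,\Omega_j^\ell}^2\Bigr)^{1/2}\Bigl(\|z_0\|_{1,k}^2 + \sum_{j=1}^Q\|z_j^\ell\|_{1,k,\Omega_j^\ell}^2\Bigr)^{1/2}.
\]

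Second, identify the first factor with $(Pv,v)_{1,k}$. Taking $w = P_0 v$ in \eqref{eq: 3_10_zero} gives $\|P_0 v\|_{1,k}^2 = (v, E_0P_0 v)_{1,k}$. Taking $w = P_j^\ell v$ in \eqref{eq: 3_10_nested} gives $\|P_j^\ell v\|_{1,k,\Omega_j^\ell}^2 = (v, E_j^\ell P_j^\ell v)_{1,k}$. Summing and using \eqref{eq: 3_11} and the symmetry of $(\cdot,\cdot)_{1,k}$, the first factor equals $(Pv,v)_{1,k}^{1/2}$. (We need $\|E_j^\ell w\|_{1,k} = \|w\|_{1,k,\Omega_j^\ell}$; this holds because $E_j^\ell$ is zero extension, and it is implicit in \eqref{eq: 3_10_nested}.)

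Third, bound the second factor by Lemma~\ref{lemma_3_3}: it is at most $\bigl(C_3(1+\Lambda_\ell\Lambda_c^2\amax\nmin^{-1}\Cstar\Theta)\bigr)^{1/2}\|v\|_{1,k}$. If $v\neq0$, divide by $\|v\|_{1,k}$ and square to obtain \eqref{eq: 3_12}; the case $v=0$ is trivial.

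None of these steps is a real obstacle. The substantive work, the stable decomposition, is already done in Lemma~\ref{lemma_3_3}. The only point needing care is the bookkeeping that the local norms on $\Omega_j^\ell$ match the global norms of the zero extensions.
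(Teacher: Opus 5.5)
Your proof is correct and follows the paper's argument essentially line by line. Both use the stable decomposition of Lemma~\ref{lemma_3_3}, the defining relations of $P_0$ and $P_j^\ell$, Cauchy--Schwarz twice, and the identification of $\|P_0v\|_{1,k}^2+\sum_j\|P_j^\ell v\|_{1,k,\Omega_j^\ell}^2$ with $(Pv,v)_{1,k}$. The zero-extension norm identity you flag is not actually needed, since taking $w=P_j^\ell v$ in \eqref{eq: 3_10_nested} already gives $\|P_j^\ell v\|_{1,k,\Omega_j^\ell}^2=(v,E_j^\ell P_j^\ell v)_{1,k}$.
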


\begin{proof}\,
Using Lemma \ref{lemma_3_3}, the definitions  of $P_j^\ell$ and $P_0$ and then Cauchy--Schwarz,  we get
\begin{align*}
	\|v\|_{1,k}^2 &= \Biggl(v, z_0 + \sum_{j=1}^{Q} {E_j^\ell} {z_j^\ell}\Biggr)_{1,k} 
	= (v, z_0)_{1,k} + \Biggl(v,\sum_{j=1}^{Q} {E_j^\ell} {z_j^\ell}\Biggr)_{1,k} \\ 
    &= (P_0v, z_0)_{1,k} + \sum_{j=1}^{Q}( {P_j^\ell} v, {z_j^\ell})_{1,k, {\Omega^\ell_j}} 
    \le \| P_0v\|_{1,k} \|z_0\|_{1,k} + \sum_{j=1}^{Q}\| {P_j^\ell} v \|_{1,k, {\Omega^\ell_j}} \|{z_j^\ell}\|_{1,k, {\Omega^\ell_j}}{.} 
\end{align*}
We now apply  Cauchy--Schwarz and  Lemma~\ref{lemma_3_3} again to obtain
\begin{align*}
	\|v\|_{1,k}^2
	&\le \left(\| P_0v\|_{1,k}^2 + \sum_{j=1}^{Q}\| {P_j^\ell} v \|_{1,k, {\Omega^\ell_j}}^2 \right)^{1/2}
	C_3^{1/2}\bigl(1 + \Lambda_\ell \Lambda_c^2 \amax \nmin^{-1} \Cstar \Theta \bigr)^{1/2} \|v\|_{1,k} \\[1ex]
	&= \left(( E_0 P_0v, v)_{1,k} + \sum_{j=1}^{Q} ( {E_j^\ell} {P_j^\ell} v, v)_{1,k} \right)^{1/2}  
	C_3^{1/2}\bigl(1 + \Lambda_\ell \Lambda_c^2 \amax \nmin^{-1} \Cstar \Theta \bigr)^{1/2}\|v\|_{1,k} \\[1ex]
	&= \left( E_0 P_0 v + \sum_{j=1}^{Q}{E_j^\ell} {P_j^\ell} v, v \right)_{1,k}^{1/2}
	C_3^{1/2}\bigl(1 + \Lambda_\ell \Lambda_c^2 \amax \nmin^{-1} \Cstar \Theta \bigr)^{1/2} \|v\|_{1,k} \\
	&= (P v,v)_{1,k}^{1/2} \, C_3^{1/2}\bigl(1 + \Lambda_\ell \Lambda_c^2 \amax \nmin^{-1} \Cstar \Theta \bigr)^{1/2}\|v\|_{1,k},
\end{align*}
and the inequality \eqref{eq: 3_12} follows. 
\end{proof}

We now study the fundamental properties of the operators $T_j^\ell$ and $T_0$.

\subsection[Existence and stability of ${T_j^\ell}$, $j=1,\ldots,Q$]{Existence and stability of {\boldmath${T_j^\ell}$}, {\boldmath$j=1,\ldots,Q$}}

\begin{lemma}[$T_j^\ell$ are well defined]
\label{lemma_3_4}
If ${H_\ell} k < \sqrt{2}$, then, for each $j=1, \ldots, Q$,  $b_{{\Omega^\ell_j}}(\cdot,\cdot)$ is positive definite
on $V_{\Omega_j^\ell}$ and the operators ${T_j^\ell}$ in \eqref{eq: bT} are well defined.
\end{lemma}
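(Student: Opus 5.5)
The plan is to use Friedrichs' inequality on each local subdomain to show that the gradient term of $b_{\Omega_j^\ell}$ dominates the $k^2$ term. Well-definedness of $T_j^\ell$ then follows by Lax--Milgram in finite dimensions.

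Fix $j$ and let $v \in V_{\Omega_j^\ell} \subseteq H^1_0(\Omega_j^\ell)$. Since $\amin = 1$ and $\nmax = 1$ (Assumption~\ref{ass: 2_1}), we have
\[
	b_{\Omega_j^\ell}(v,v) \ge \|\nabla v\|_{\Omega_j^\ell}^2 - k^2\|v\|_{\Omega_j^\ell}^2 .
\]
The subdomain $\Omega_j^\ell$ has diameter $H_{\ell,j} \le H_\ell$, so it lies between two parallel hyperplanes a distance $H_\ell$ apart. Friedrichs' inequality \eqref{eq:friedrichs} (Lemma~\ref{lemma: 2.5}) then gives $\|v\|_{\Omega_j^\ell}^2 \le \tfrac{H_\ell^2}{2}\|\nabla v\|_{\Omega_j^\ell}^2$. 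Substituting,
\[
	b_{\Omega_j^\ell}(v,v) \ge \Bigl(1 - \frac{(kH_\ell)^2}{2}\Bigr)\|\nabla v\|_{\Omega_j^\ell}^2 .
\]
The hypothesis $H_\ell k < \sqrt{2}$ makes the prefactor strictly positive. For $v \ne 0$ in $H^1_0$ we have $\|\nabla v\|_{\Omega_j^\ell} > 0$ (again by Friedrichs), so $b_{\Omega_j^\ell}(v,v) > 0$. This proves positive definiteness on $V_{\Omega_j^\ell}$.

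For well-definedness, fix $u \in V^h$. The map $v \mapsto b(u, E_j^\ell v)$ is a linear functional on the finite-dimensional space $V_{\Omega_j^\ell}$. Since $b_{\Omega_j^\ell}$ is symmetric positive definite there, it is an inner product. The Riesz representation theorem (equivalently, invertibility of the Gram matrix $\mathbf{B}_j^\ell$) then yields a unique $T_j^\ell u \in V_{\Omega_j^\ell}$ satisfying \eqref{eq: bT}. Linearity of $T_j^\ell$ follows from uniqueness.

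There is no real obstacle here. The only points needing care are that the width between bounding hyperplanes is at most the diameter, and that the normalisations $\amin = \nmax = 1$ are used in the right direction. If desired, one can also record the quantitative coercivity bound $b_{\Omega_j^\ell}(v,v) \gtrsim \|v\|_{1,k,\Omega_j^\ell}^2$ when $kH_\ell$ is bounded away from $\sqrt 2$. This follows by splitting off a fraction of the gradient term and using $k^2\|v\|_n^2 \le k^2\|v\|^2 \le \tfrac{(kH_\ell)^2}{2}\|\nabla v\|^2$. That estimate will likely be needed for the subsequent stability bounds on $T_j^\ell$.
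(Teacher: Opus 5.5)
Your proposal is correct and follows essentially the same route as the paper: Friedrichs' inequality on $\Omega_j^\ell$ (whose width is at most $H_\ell$), combined with $\amin=\nmax=1$, gives positive definiteness of $b_{\Omega_j^\ell}$ when $kH_\ell<\sqrt{2}$, and invertibility of the resulting finite-dimensional system gives $T_j^\ell$. The only cosmetic difference is that you write the lower bound as $\bigl(1-(kH_\ell)^2/2\bigr)\|\nabla v\|_{\Omega_j^\ell}^2$, whereas the paper writes it as $\frac{2-(kH_\ell)^2}{H_\ell^2}\|v\|_{\Omega_j^\ell}^2$.
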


\begin{proof}
Using Friedrichs' inequality \eqref{eq:friedrichs} and the assumption that $\amin = 1 = \nmax$, we obtain, for $u\in V_{\Omega_j^\ell}$
\begin{equation*}
	b_{{\Omega^\ell_j}}(u,u) = a_{{\Omega^\ell_j}}(u,u) - k^2({n}u,u)_{{\Omega^\ell_j}} 
	\ge \frac{2}{{{(H_\ell)}}^2}\|u\|_{{\Omega^\ell_j}}^2 - k^2\|u\|_{{\Omega^\ell_j}}^2 
	= \frac{2-{{(H_\ell)}}^2k^2}{{{(H_\ell)}}^2}\|u\|_{{\Omega^\ell_j}}^2,
\end{equation*}
and hence $b_{{\Omega^\ell_j}}(\cdot,\cdot)$ is positive definite when $H_\ell k < \sqrt{2}$. 
The square system \eqref{eq: bT} defining $T_j^\ell v$ is then positive definite and the result follows.
\end{proof}

\begin{remark}
Whilst this is a sufficient condition for the ${T_j^\ell}$ to be well defined, it is
far from necessary since the ${T_j^\ell}$ will be well defined when the linear system corresponding to \eqref{eq: bT} is non-singular. 
However, it seems hard to guarantee good stability properties for $T_j^\ell$ without the above condition.
\end{remark}

\begin{lemma}[Stability of ${T_j^\ell}$, $j=1,\ldots,Q$]
\label{lemma_3_6}
Suppose that $\sqrt{2} {H_\ell}\kappa \le 1$.  Then
\begin{equation}\label{eq: 3_31}
	\|{T_j^\ell} v\|_{1,k,{\Omega^\ell_j}} \le 2\big\|v|_{{\Omega^\ell_j}}\big\|_{1,k,{\Omega^\ell_j}} 
	\qquad \text{for all} \;\; v \in V^h.
\end{equation}
\end{lemma}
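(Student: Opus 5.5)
The plan is to use coercivity of $b_{\Omega_j^\ell}$ on $V_{\Omega_j^\ell}$ (already established in the proof of Lemma~\ref{lemma_3_4}) and to compare it with the $k$-norm. Fix $v\in V^h$ and set $u\assign T_j^\ell v\in V_{\Omega_j^\ell}$, which is well defined by Lemma~\ref{lemma_3_4}, since $\sqrt2 H_\ell k\le 1$ implies $H_\ell k<\sqrt2$.

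\emph{Step 1: a lower bound for $b_{\Omega_j^\ell}(u,u)$.} By Friedrichs' inequality (Lemma~\ref{lemma: 2.5}) together with $\amin=1=\nmax$, we have $k^2\|u\|_{n,\Omega_j^\ell}^2\le k^2\|u\|_{\Omega_j^\ell}^2\le \tfrac{H_\ell^2k^2}{2}\, a_{\Omega_j^\ell}(u,u)\le \tfrac14 a_{\Omega_j^\ell}(u,u)$. Hence
\[
	b_{\Omega_j^\ell}(u,u)=a_{\Omega_j^\ell}(u,u)-k^2\|u\|^2_{n,\Omega_j^\ell}\ge \tfrac34\, a_{\Omega_j^\ell}(u,u),
	\qquad
	\|u\|_{1,k,\Omega_j^\ell}^2\le \tfrac54\, a_{\Omega_j^\ell}(u,u).
\]
Combining these gives $\|u\|_{1,k,\Omega_j^\ell}^2\le \tfrac53\, b_{\Omega_j^\ell}(u,u)$.

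\emph{Step 2: an upper bound via the definition \eqref{eq: bT}.} Taking the test function equal to $u$, we get $b_{\Omega_j^\ell}(u,u)=b(v,E_j^\ell u)$. Since $E_j^\ell u$ is supported in $\overline{\Omega_j^\ell}$, this equals $b_{\Omega_j^\ell}(v|_{\Omega_j^\ell},u)$. By \eqref{lem:est_b} on $\Omega_j^\ell$, it is then bounded by $\|v|_{\Omega_j^\ell}\|_{1,k,\Omega_j^\ell}\|u\|_{1,k,\Omega_j^\ell}$.

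\emph{Step 3: conclusion.} Combining the two steps gives $\|u\|_{1,k,\Omega_j^\ell}\le \tfrac53\|v|_{\Omega_j^\ell}\|_{1,k,\Omega_j^\ell}\le 2\|v|_{\Omega_j^\ell}\|_{1,k,\Omega_j^\ell}$, which is \eqref{eq: 3_31}.

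The only point needing care is the constant bookkeeping in Step 1, i.e.\ that the hypothesis $\sqrt2 H_\ell k\le1$ yields a coercivity constant good enough to reach the factor $2$. It does, with room to spare, since the argument produces $5/3$. The localisation of $b$ to $\Omega_j^\ell$ in Step 2 is immediate from the support of $E_j^\ell u$.
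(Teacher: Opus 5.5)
Your proof is correct and uses the same ingredients as the paper: Friedrichs' inequality on $\Omega_j^\ell$, the defining relation \eqref{eq: bT} tested with $T_j^\ell v$, and the continuity bound \eqref{lem:est_b}. The only difference is in the constants. The paper writes $\|T_j^\ell v\|_{1,k,\Omega_j^\ell}^2 = b_{\Omega_j^\ell}(T_j^\ell v,T_j^\ell v)+2k^2\|T_j^\ell v\|_{n,\Omega_j^\ell}^2$ and absorbs the second term into the full $k$-norm, which gives the factor $(1-k^2H_\ell^2)^{-1}\le 2$; because you compare $k^2\|u\|_{\Omega_j^\ell}^2$ only with $a_{\Omega_j^\ell}(u,u)$, you obtain the slightly sharper constant $5/3$.
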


\begin{proof}\,
Using \eqref{eq: bT}, \eqref{lem:est_b}, the fact that $\nmax = 1 = \amin$, and Friedrichs' inequality \eqref{eq:friedrichs}, we obtain
\begin{align*}
	\|{T_j^\ell} v\|^2_{1,k,{\Omega^\ell_j}} 
	&= b_{{\Omega^\ell_j}}({T_j^\ell} v,{T_j^\ell} v) + 2k^2\|T_j^\ell v\|_{n,\Omega_i^\ell}^2 
	= b_{\Omega_i^\ell}\bigl(v|_{\Omega^\ell_j},{T_j^\ell} v\bigr) + 2k^2\|T_j^\ell v\|_{n,\Omega^\ell_j}^2
	\\
	&\le \big\|v|_{\Omega^\ell_j}\big\|_{1,k,\Omega^\ell_j}\|T_j^\ell v\|_{1,k,\Omega^\ell_j} 
	+ 2k^2\|T_j^\ell v\|_{\Omega^\ell_j}^2
	\\
	&\le \big\|v|_{\Omega^\ell_j}\big\|_{1,k,\Omega^\ell_j}\|{T_j^\ell} v\|_{1,k,\Omega^\ell_j} 
	+ k^2(H_\ell)^2\|T_j^\ell v\|^2_{1,k,\Omega^\ell_j},
\end{align*}
which implies 
$\bigl(1-k^2 (H_\ell)^2\bigr)\|T_j^\ell v\|_{1,k,\Omega^\ell_j} \le \big\|v|_{\Omega^\ell_j}\big\|_{1,k,\Omega^\ell_j}$, 
and the result follows.
\end{proof}

\subsection[Existence and stability of $T_0$]{Existence and stability of {\boldmath$T_0$}}

\begin{lemma}[$T_0$ is well-defined] \label{lemma_3_5}
Let $C_2>0$ be as in Theorem~\ref{Lemma_3_1}.  If  
\begin{equation}\label{3_7_a}
	C_2 \Lambda_c (\amax \nmin^{-1} \Cstar)^{1/2} k\Theta^{1/2} (1 + \Cstab) < \frac{1}{\sqrt{2}\,}, 
\end{equation}
then there exists $h_1 > 0$ such that, for all $h\in(0,h_1)$, the operator $T_0$ is well defined.
\end{lemma}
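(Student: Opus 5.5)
We will show that $T_0$ is well defined by proving that $b(\cdot,\cdot)$ restricted to $V_0\times V_0$ is non-degenerate. Then the square system \eqref{eq: 2_25_zero}, i.e.\ $\mathbf{B}_0=\mathbf{R}_0\mathbf{B}\mathbf{E}_0$, is invertible. Since $V_0$ is finite-dimensional, it suffices to show: if $u_0\in V_0$ satisfies $b(u_0,v)=0$ for all $v\in V_0$, then $u_0=0$. The plan is a Schatz-type duality argument in the spirit of Lemma~\ref{schatz_wang}, with the coarse space $V_0$ playing the role of $V^h$. The approximation property of $V_0$ will come from Lemma~\ref{Lemma_3_2} rather than from polynomial approximation.

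Step 1 (discrete duality). Take $u_0\in V_0$ with $b(u_0,v)=0$ for all $v\in V_0$. Let $\xi\in H_0^1(\Omega)$ solve the adjoint problem $b(w,\xi)=k^2(w,u_0)_n$ for all $w\in H^1_0(\Omega)$. By Assumption~\ref{Ass: 2_3} and $\nmax=1$ we have $\|\xi\|_{1,k}\le \Cstab k^2\|u_0\|_n$. Since $V_0\subseteq V^h$, we want to approximate $\xi$ in $V_0$. We do this in two stages: first by its fine Galerkin approximation $\xi_h\in V^h$, then by an element of $V_0$.

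Step 2 (fine approximation). We define $\xi_h\in V^h$ by $b(w,\xi_h)=k^2(w,u_0)_n$ for all $w\in V^h$. By Lemma~\ref{schatz_wang}, there is $h_1>0$ such that for $h<h_1$ we have $\beta$ small. Then $\xi_h$ exists and $\|\xi-\xi_h\|_{1,k}\lesssim (\beta/k)\,k^2\|u_0\|_n$, which makes $\|\xi_h\|_{1,k}\le (\Cstab+o(1))k^2\|u_0\|_n$. The precise choice of $h_1$ is the reason for the ``there exists $h_1$'' in the statement. We aim for $\|\xi_h\|_{1,k}\le (1+\Cstab)k^2\|u_0\|_n$ once $\beta/k\le 1$ (using $k\ge k_0$).

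Step 3 (coarse approximation and conclusion). Testing with $w=u_0\in V^h$ gives $k^2\|u_0\|_n^2=b(u_0,\xi_h)=b(u_0,\xi_h-z)$ for every $z\in V_0$, using the Galerkin condition on $u_0$ and the symmetry of $b$. By \eqref{lem:est_b} and Lemma~\ref{Lemma_3_2} applied to $\xi_h$, we get
\[
k^2\|u_0\|_n^2\le \|u_0\|_{1,k}\,C_2\Lambda_c(\amax\nmin^{-1}\Cstar\Theta)^{1/2}(1+\Cstab)k^2\|u_0\|_n .
\]
Hence $k\|u_0\|_n\le \eta\|u_0\|_{1,k}$, where $\eta:=C_2\Lambda_c(\amax\nmin^{-1}\Cstar)^{1/2}k\Theta^{1/2}(1+\Cstab)<1/\sqrt2$ by \eqref{3_7_a}. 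Then $0=b(u_0,u_0)=\|u_0\|_{1,k}^2-2k^2\|u_0\|_n^2\ge(1-2\eta^2)\|u_0\|_{1,k}^2$, so $u_0=0$.

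The main obstacle is Step 2. We must check that the fine-level perturbation really fits inside the factor $(1+\Cstab)$, i.e.\ that the hidden constant from \eqref{eq:quasi_optimality} can be absorbed by taking $h_1$ small. The delicate point is that Lemma~\ref{Lemma_3_2} requires a discrete function in $V^h$. If absorbing the constant is awkward, we will instead bound $\|\xi-\xi_h\|$ separately and enlarge the inequality to $(1+\Cstab)$ using $\beta\to0$.
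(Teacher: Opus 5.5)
Your proposal is correct and follows essentially the paper's argument. It solves the dual problem at the fine level with the degenerate coarse element as data, uses Galerkin orthogonality on $V_0$ to subtract an arbitrary $z\in V_0$, and applies Lemma~\ref{Lemma_3_2} to that discrete dual solution, with the bound $(1+\Cstab)$ obtained by choosing $h_1$ via Lemma~\ref{schatz_wang}. It then concludes from $b(u_0,u_0)=0$.

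The differences are cosmetic. Your $n$-weighted data $k^2(\cdot,u_0)_n$ means $h_1$ only needs $\beta\le\min\{1/2,k_0/2\}$, rather than a bound involving $\nmin^{1/2}$. Your closing step $(1-2\eta^2)\|u_0\|_{1,k}^2\le 0$ is equivalent to the paper's contradiction $\|w_0\|\le\sqrt{2}\,\eta\,\|w_0\|$.
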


\begin{proof}\,
Let $C$ and $\Theta$ be as in Lemma~\ref{schatz_wang}.
Then there exists $h_1>0$ such that
\[
	C\,\Theta\Bigl(\frac{h}{r}\Bigr) \le \min\biggl\{\frac{1}{2},\frac{k_0\nmin^{1/2}}{2}\biggr\}
	\qquad \text{for all} \;\; h\in(0,h_1).
\]
Let $h\in(0,h_1)$ and
assume (for a contradiction) that there exists a $w_0 \in V_0\backslash \lbrace 0 \rbrace {\subseteq V^h}$ such that
\begin{equation}\label{eq:w_0_sol}
	b(w_0,z) = 0 \qquad \text{for all} \;\; z \in V_0.
\end{equation}
Let $w \in H^1_0(\Omega)$ be the solution of $b(w,v)=(w_0,v)$ for all $v \in H^1_0(\Omega)$, 
which exists and is unique by Assumption~\ref{Ass: 2_3}.
It follows from Lemma~\ref{schatz_wang} that $\beta\le\min\bigl\{1/2,k_0\nmin^{1/2}/2\bigr\}<1/\sqrt{2}$
(where $\beta$ is defined as in Lemma~\ref{schatz_wang})
and there exists $w_h \in V^h$ such that
\begin{equation}
	b(w_h, v) = (w_0,v) \quad \text{for all} \;\; v \in V^h, \quad \text{and} \quad 
	\|w - w_h\|_{1,k} \le \frac{2 \beta}{k \nmin^{1/2}} \|w_0\| \le \|w_0\|.   
	\label{ScottWang}
\end{equation}
Let $z\in V_0$ be arbitrary.
Inserting $v=w_0$ in \eqref{ScottWang} and combining with \eqref{eq:w_0_sol} and then \eqref{lem:est_b}, we obtain
\begin{equation*}
	\|w_0\|^2 = b(w_h, w_0) = b(w_h - z, w_0)
	\le \|w_h - z\|_{1,k} \|w_0\|_{1,k} \qquad\text{for all} \;\; z \in V_0.
\end{equation*}
Since this is true for all $z \in V_0$, Lemma~\ref{Lemma_3_2} yields  
\begin{equation}\label{eq: 3_5_f_sq}
	\|w_0\|^2 \le \inf_{z \in V_0}\|w_h-z\|_{1,k} \|w_0\|_{1,k}
	\le C_2\, \Lambda_c (\amax \nmin^{-1} \Cstar)^{1/2}
	\Theta^{\frac{1}{2}}\|w_h\|_{1,k} \, \|w_0\|_{1,k}.
\end{equation}
Also, equation \eqref{eq:w_0_sol} with $z=w_0$ shows that
$0 = b(w_0,w_0) = \|w_0\|_{1,k}^2-2k^2\|w_0\|_n^2$, which, together with \eqref{eq: 3_5_f_sq}, implies
\begin{align*}
	\|w_0\|^2 &\le C_2 \Lambda_c (\amax \nmin^{-1} \Cstar)^{1/2} \Theta^{\frac{1}{2}} \|w_h\|_{1,k}\sqrt{2}\, k \|w_0\|_n
	\\
	&\le \sqrt{2}\, C_2 \Lambda_c (\amax \nmin^{-1} \Cstar)^{1/2} k \Theta^{\frac{1}{2}} \|w_h\|_{1,k} \|w_0\|,
\end{align*}
and hence
\begin{equation}\label{eq: 3_5_1} 
	\|w_0\| \le \sqrt{2}\, C_2 \Lambda_c (\amax \nmin^{-1} \Cstar)^{1/2} k \Theta^{\frac{1}{2}} \|w_h\|_{1,k}. 
\end{equation}
Then we use the triangle inequality, \eqref{ScottWang} and \eqref{eq: 2_10} to obtain
\begin{equation}
	\|w_h\|_{1,k} \le \|w-w_h\|_{1,k} + \|w\|_{1,k}  \le (1 + \Cstab )\|w_0\|.
	\label{eq: w_h_comb}
\end{equation}
With \eqref{eq: w_h_comb} used in \eqref{eq: 3_5_1}, we arrive at
\begin{equation*}
	\|w_0\| \le  \sqrt{2}\, C_2 \Lambda_c (\amax \nmin^{-1} \Cstar)^{1/2} k \Theta^{\frac{1}{2}} (1+\Cstab)\|w_0\|.
\end{equation*}
Together with \eqref{3_7_a} this leads to a contradiction.
\end{proof}

\begin{lemma}[Stability of $T_0$]
\label{lemma: 3_7}
With $C_2$ as in Theorem~\ref{Lemma_3_1}, assume that \eqref{3_7_a} is satisfied.  Then there exists $h_1>0$ such that, for $h\in(0,h_1)$, 
\begin{equation}
	\|T_0 u - u\| \le {C_2}\, \Lambda_c (\amax \nmin^{-1} \Cstar)^{1/2}
	\Theta^{1/2} (1 + \Cstab)\|T_0 u - u\|_{1,k} \qquad \text{for all} \ u \in V^h.
	\label{3_7_b}
\end{equation}
Suppose, in addition, that 
\begin{equation}\label{3_7_c}
	C_2\, \Lambda_c (\amax \nmin^{-1} \Cstar)^{1/2} k \Theta^{1/2} (1 + \Cstab)
	\le \frac{1}{4};
\end{equation} 
then 
\begin{equation}\label{3_7_d}
	\|u - T_0 u\|_{1,k} \le 2 \|u\|_{1,k} \qquad \text{for all} \ u \in V^h.
\end{equation}
\end{lemma}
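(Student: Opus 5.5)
Both estimates will come from an Aubin--Nitsche style duality argument applied to the Galerkin orthogonality of $T_0$, reusing the ingredients from the proof of Lemma~\ref{lemma_3_5}. We take $h_1$ as in that lemma, so that $T_0$ is well defined and Lemma~\ref{schatz_wang} gives $\beta\le\min\{1/2,k_0\nmin^{1/2}/2\}$. Set $e\assign T_0u-u\in V^h$. By \eqref{eq: 2_25_zero}, $b(e,z)=0$ for all $z\in V_0$.

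\textbf{Proof of \eqref{3_7_b}.}
\begin{itemize}
\item[(1)] Let $w\in H^1_0(\Omega)$ solve $b(w,v)=(e,v)$ for all $v\in H^1_0(\Omega)$, so $\|w\|_{1,k}\le\Cstab\|e\|$ by Assumption~\ref{Ass: 2_3}.
\item[(2)] Let $w_h\in V^h$ be its Galerkin approximation: $b(w_h,v)=(e,v)$ for all $v\in V^h$. Exactly as in \eqref{ScottWang} and \eqref{eq: w_h_comb}, we get $\|w_h\|_{1,k}\le(1+\Cstab)\|e\|$.
\item[(3)] Test with $v=e\in V^h$. Using the symmetry of $b$ and the Galerkin orthogonality of $e$ against $V_0$,
\[
\|e\|^2=b(w_h,e)=b(w_h-z,e)\le\|w_h-z\|_{1,k}\|e\|_{1,k}\qquad\text{for all } z\in V_0 .
\]
\item[(4)] Take the infimum over $z\in V_0$ and apply Lemma~\ref{Lemma_3_2} to $w_h\in V^h$:
\[
\|e\|^2\le C_2\Lambda_c(\amax\nmin^{-1}\Cstar)^{1/2}\Theta^{1/2}\,\|w_h\|_{1,k}\,\|e\|_{1,k}.
\]
\item[(5)] Insert the bound on $\|w_h\|_{1,k}$ from (2) and divide by $\|e\|$ (the case $e=0$ is trivial). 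This gives \eqref{3_7_b}.
\end{itemize}
This step only repeats the argument of Lemma~\ref{lemma_3_5}, now with the nonzero right-hand side $e$, so it should pose no difficulty.

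\textbf{Proof of \eqref{3_7_d}.} We need a G\aa rding-type identity. Since $n\le\nmax=1$,
\[
\|e\|_{1,k}^2=b(e,e)+2k^2\|e\|_n^2\le b(e,e)+2k^2\|e\|^2 .
\]
By Galerkin orthogonality with $z=T_0u\in V_0$, we have $b(e,e)=b(e,-u)=-b(e,u)$. By \eqref{lem:est_b}, $|b(e,u)|\le\|e\|_{1,k}\|u\|_{1,k}$. Multiplying \eqref{3_7_b} by $k$ and using \eqref{3_7_c} gives $k\|e\|\le\tfrac14\|e\|_{1,k}$, hence $2k^2\|e\|^2\le\tfrac18\|e\|_{1,k}^2$. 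Combining,
\[
\tfrac78\|e\|_{1,k}^2\le\|e\|_{1,k}\|u\|_{1,k},
\]
so $\|e\|_{1,k}\le\tfrac87\|u\|_{1,k}\le2\|u\|_{1,k}$, which is \eqref{3_7_d}.

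\textbf{Main obstacle.} The only delicate point is making sure the Schatz bound $\|w-w_h\|_{1,k}\le\|e\|$ holds uniformly for the chosen $h_1$. This is exactly the choice already made in the proof of Lemma~\ref{lemma_3_5}, which we reuse. We should also check that \eqref{3_7_c} implies \eqref{3_7_a}, which is clear because $\tfrac14<1/\sqrt2$; so $T_0$ exists throughout.
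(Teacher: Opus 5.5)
Your proof is correct. For \eqref{3_7_b} it follows the paper's argument: the discrete dual problem with data $T_0u-u$, the bound $\|w_h\|_{1,k}\le(1+\Cstab)\|T_0u-u\|$, Galerkin orthogonality on $V_0$, and then Lemma~\ref{Lemma_3_2}.

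For \eqref{3_7_d} you take a different and shorter route. Write $K$ for the left-hand side of \eqref{3_7_c}.

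The paper tests the orthogonality with $P_0u-T_0u$, where $P_0$ is the $(\cdot,\cdot)_{1,k}$-orthogonal projection onto $V_0$. It then converts $b$ back into $(\cdot,\cdot)_{1,k}$ and uses $\|u-P_0u\|_{1,k}\le\|u\|_{1,k}$ and $\|P_0(T_0u-u)\|_{1,k}\le\|T_0u-u\|_{1,k}$. This yields the linear inequality $\|u-T_0u\|_{1,k}\le\|u\|_{1,k}+2k\|u-T_0u\|_n$, i.e.\ the factor $(1-2K)^{-1}$. That is why $K\le\frac14$ is imposed to obtain the constant $2$.

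You instead test with $T_0u$ itself, so that $b(e,e)=-b(e,u)$. You then use the continuity bound \eqref{lem:est_b} and absorb the $L^2$ term quadratically via $2k^2\|e\|^2\le 2K^2\|e\|_{1,k}^2$. This is exactly the Schatz argument of Lemma~\ref{schatz_wang}, with $V_0$ in place of $V^h$ and $K$ in place of $\beta$.

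Your route buys several things:
\begin{itemize}
\item It avoids $P_0$ altogether.
\item It gives the sharper factor $(1-2K^2)^{-1}$, hence $8/7$ under \eqref{3_7_c}.
\item \eqref{3_7_a} alone already yields a stability bound, so \eqref{3_7_c} could be relaxed to $K\le\frac12$ and still give the constant $2$.
\item Writing $b(e,e)=b(e,z-u)$ for arbitrary $z\in V_0$ gives quasi-optimality, $\|u-T_0u\|_{1,k}\le(1-2K^2)^{-1}\inf_{z\in V_0}\|u-z\|_{1,k}$.
\end{itemize}
The paper's version gains nothing beyond keeping the computation within the $P_0$-based calculus that it reuses in Section~\ref{sec:proof}.
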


\begin{proof}\,
Let $h_1>0$ be as in Lemma~\ref{lemma_3_5}.
Further, let $u \in V^h$ and consider the auxiliary problem: 
\begin{equation}\label{eq 3_7_4}
	\text{find } w_h \in V^h \;\; \text{such that} \quad b(w_h, v) = \bigl(T_0 u - u,v\bigr) \quad \text{for all} \;\; v \in V^h. 
\end{equation}
Analogously to \eqref{eq: w_h_comb}, problem \eqref{eq 3_7_4} has a unique solution $w_h$  for $h\in(0,h_1)$, and 
\begin{equation} \label{estwh} 
	\|w_h\|_{1,k} \le (1+\Cstab) \|T_0 u - u\|. 
\end{equation}
Now, by the definition \eqref{eq: 3_10_zero} of $T_0$, we have $b(T_0 u-u,z)=0$ for all $z \in V_0$.
Inserting $v = T_0 u - u \in V^h$ into \eqref{eq 3_7_4} and using \eqref{lem:est_b}, we obtain, for every $z\in V_0$,
\begin{equation*}
	\|T_0 u - u\|^2 = b(w_h, T_0 u - u)
	= b(w_h - z, T_0 u - u)
	\le \|w_h-z\|_{1,k}\,\|T_0 u - u\|_{1,k}.
\end{equation*}
Combining this with Lemma~\ref{Lemma_3_2} we obtain
\begin{equation}\label{ineq:T0u-u}
	\|T_0 u - u\|^2 \le \|T_0 u - u\|_{1,k}\inf_{z \in V_0}\|w_h-z\|_{1,k} 
	\le \|T_0 u - u\|_{1,k} {C_2}\, \Lambda_c (\amax \nmin^{-1} \Cstar)^{1/2}
	\Theta^{1/2} \|w_h\|_{1,k}.
\end{equation}
Together with \eqref{estwh}, this proves \eqref{3_7_b}. 

Let us now show \eqref{3_7_d}. Inserting $P_0 u - T_0 u \in V_0$ in the definition \eqref{eq: 2_25_zero} of $T_0$
we obtain 
\[
	b(u-T_0 u, P_0 u - T_0 u) = 0.
\]
Using this together with  the link between the bilinear forms $(\cdot,\cdot)_{1,k}$ and $b$, 
and the Cauchy--Schwarz inequality we obtain
\begin{align*}
	\|u - T_0 u\|_{1,k}^2 &= (u - T_0 u, u - T_0 u)_{1,k} 
	= b(u - T_0 u, u - T_0 u) + 2k^2(u - T_0 u, u - T_0 u)_n
	\\[0.5ex]
	&= b(u - T_0 u, u - T_0 u) - b(u - T_0 u, P_0 u - T_0 u) + 2 \kappa^2 (u - T_0 u, u - T_0 u)_n
	\\[0.5ex]
	&= b(u - T_0 u, u - P_0 u) + 2k^2(u - T_0 u, u - T_0 u)_n
	\\[0.5ex]
	&= (u - T_0 u, u - P_0 u)_{1,k} - 2k^2(u - T_0 u, u - P_0 u)_n + 2k^2(u - T_0 u, u - T_0 u)_n
	\\[0.5ex]
	&= (u - T_0 u, u - P_0 u)_{1,k} + 2k^2(u - T_0 u, P_0 u - T_0 u)_n
	\\[0.5ex]
	&\le \|u - T_0 u\|_{1,k}\|u - P_0 u\|_{1,k} + 2k^2\|u - T_0 u\|_n \|T_0 u - P_0 u\|_n
	\\[0.5ex]
	&\le \|u - T_0 u\|_{1,k}\|u - P_0 u\|_{1,k} + 2k\|u - T_0 u\|_n \|T_0 u - P_0 u\|_{1,k}
	\\[0.5ex]
	&= \|u - T_0 u\|_{1,k}\|u - P_0 u\|_{1,k} + 2k\|u - T_0 u\|_n \|P_0(T_0 u - u)\|_{1,k}
	\\[0.5ex]
	&\le \|u - T_0 u\|_{1,k}\|u\|_{1,k} + 2k\|u - T_0 u\|_n \|T_0 u - u\|_{1,k},
\end{align*}
where in the last step we used the fact that $P_0$ is the orthogonal projection onto $V_0$ with respect to $(\cdot,\cdot)_{1,k}$.
Dividing both sides by $\|u - T_0 u\|_{1,k}$ and then using \eqref{3_7_b}, $\nmax = 1$ and the assumption \eqref{3_7_c}, we arrive at 
\begin{align*}
	\|u - T_0 u\|_{1,k} &\le \|u\|_{1,k} + 2k\|u - T_0 u\|_n
	\le \|u\|_{1,k} + 2{C_2}\, \Lambda_c (\amax \nmin^{-1} \Cstar)^{1/2}
	k \Theta^{1/2} (1 + \Cstab)\|u - T_0 u\|_{1,k}
	\\[0.5ex]
	&\le \|u\|_{1,k} + \frac{1}{2}\|u - T_0 u\|_{1,k},
\end{align*}
which proves \eqref{3_7_d}.
\end{proof}

\section{Proof of the main result}\label{sec:proof}

Before Theorem~\ref{theorem: convergence} can be proved, it is necessary to state and prove the following key lemma..

\begin{lemma}[Core estimates for the main result] \label{ther: 4_1}
Under the assumptions of Theorem~\ref{theorem: convergence}, let 
\begin{equation*}
	c_1 \assign
	\frac{1-s}{C_3\bigl(1 + \Lambda_\ell \Lambda_c^2 \amax {\nmin^{-1} \Cstar } \Theta \bigr)},
	\qquad
	c_2 \assign 18 + 8 {\Lambda_{\ell}}^2, 
\end{equation*}
where $s$ is given by \eqref{eq: 4_3}.  Then, for all $u \in V^h$,
\begin{equation}
	c_1\|u\|_{1,k}^2 \le (Tu,u)_{1,k},
	\label{eq: 4_4}
\end{equation}
and
\begin{equation}
	\|Tu\|_{1,k}^2 \le c_2 \|u\|_{1,k}^2.
	\label{eq: 4_5}
\end{equation}
\end{lemma}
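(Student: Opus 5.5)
The plan is the standard Schwarz argument for indefinite problems: compare $T$ with its positive definite proxy $P$. For each piece I would bound the difference $T_\circ - P_\circ$ by a lower-order $L^2$-type term. Then Proposition~\ref{prop:Puu} supplies the coercivity, and Lemmas~\ref{lemma_3_6} and~\ref{lemma: 3_7} supply the boundedness. First I check that \eqref{eq: 4_3} implies the hypotheses of those lemmas. Since $s<1$ and $C_3,\Lambda_\ell,\Lambda_c,\amax\nmin^{-1},\Cstar\ge1$, we get $2C_2\Lambda_c(\amax\nmin^{-1}\Cstar)^{1/2}k\Theta^{1/2}(1+\Cstab)<1/4$ and $3k\Lambda_\ell H_\ell<1/2$. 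These give \eqref{3_7_a}, \eqref{3_7_c}, $\sqrt2 H_\ell k\le1$ and $H_\ell k<\sqrt2$. So all $T_j^\ell$ and $T_0$ exist (taking $h_\star\le h_1$), and \eqref{eq: 3_31}, \eqref{3_7_b} and \eqref{3_7_d} are available.

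For \eqref{eq: 4_5}, I write $Tu = E_0T_0u+\sum_j E_j^\ell T_j^\ell u$. Then
\[
\|Tu\|_{1,k}^2\le 2\|T_0u\|_{1,k}^2+2\Bigl\|\sum_j E_j^\ell T_j^\ell u\Bigr\|_{1,k}^2 .
\]
For the coarse term, $\|T_0u\|_{1,k}\le\|u\|_{1,k}+\|u-T_0u\|_{1,k}\le3\|u\|_{1,k}$ by \eqref{3_7_d}, which contributes $18$. For the local term I use \eqref{overlap1}, then \eqref{eq: 3_31}, then \eqref{overlap}:
\[
\Bigl\|\sum_j E_j^\ell T_j^\ell u\Bigr\|^2_{1,k}\le\Lambda_\ell\sum_j\|T_j^\ell u\|^2_{1,k,\Omega_j^\ell}\le4\Lambda_\ell^2\|u\|_{1,k}^2,
\]
which contributes $8\Lambda_\ell^2$. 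Together these give $c_2=18+8\Lambda_\ell^2$.

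For \eqref{eq: 4_4} I write $(Tu,u)_{1,k}=(Pu,u)_{1,k}+((T-P)u,u)_{1,k}$. I then show
\[
|((T-P)u,u)_{1,k}|\le \tfrac{s}{C_3(1+\Lambda_\ell\Lambda_c^2\amax\nmin^{-1}\Cstar\Theta)}\|u\|^2_{1,k};
\]
combined with \eqref{eq: 3_12} this yields $c_1$. The key identity comes from $(\cdot,\cdot)_{1,k}=b+2k^2(\cdot,\cdot)_n$ together with the definitions \eqref{eq: bT}, \eqref{eq: 3_10_nested}. It gives, for $v\in V_{\Omega_j^\ell}$,
\[
(T_j^\ell u - P_j^\ell u, v)_{1,k,\Omega_j^\ell}=2k^2(T_j^\ell u - u, v)_{n,\Omega_j^\ell},
\]
and the analogous identity for the coarse space:
\[
((T_0-P_0)u,z)_{1,k}=2k^2(T_0u-u,z)_n,\qquad z\in V_0.
\]
Consequently
\[
((T_0-P_0)u,u)_{1,k}=((T_0-P_0)u,P_0u)_{1,k}=2k^2(T_0u-u,P_0u)_n .
\]
I bound this by $2k\|T_0u-u\|_n\,\|u\|_{1,k}$ and then apply \eqref{3_7_b} and \eqref{3_7_d}. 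This produces a term $4C_2\Lambda_c(\amax\nmin^{-1}\Cstar)^{1/2}k\Theta^{1/2}(1+\Cstab)\|u\|^2_{1,k}$, matching the first summand in $s$. For the local parts, the quantity $2k^2(T_j^\ell u-u,P_j^\ell u)_{n,\Omega_j^\ell}$ is handled with Friedrichs (Lemma~\ref{lemma: 2.5}), which gives $k\|w\|_{\Omega_j^\ell}\le kH_\ell\|w\|_{1,k,\Omega_j^\ell}/\sqrt2$ for $w\in V_{\Omega_j^\ell}$. One must be careful here, because $u|_{\Omega_j^\ell}$ does not vanish on the boundary. So I pair against $P_j^\ell u$, which does vanish, and use $\|T_j^\ell u\|\le2\|u\|$ and $\|P_j^\ell u\|\le\|u\|$ locally. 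Summing over $j$ with \eqref{overlap} gives a bound $\lesssim kH_\ell\Lambda_\ell\|u\|^2_{1,k}$, with constant $3$ after bookkeeping.

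The main obstacle is exactly this local term. I would first try to estimate $k^2|(T_j^\ell u-u,P_j^\ell u)_n|\le k^2\|T_j^\ell u-u\|_{\Omega_j^\ell}\|P_j^\ell u\|_{\Omega_j^\ell}$. Friedrichs applies only to $P_j^\ell u$, giving a factor $kH_\ell$, and the remaining factor $k\|T_j^\ell u-u\|\le 3\|u\|_{1,k,\Omega_j^\ell}$. This explains the constant $3$. Finally I would multiply by the prefactor $C_3(1+\cdots)$ to form $s$; the factor $2$ in $s$ absorbs the constants.
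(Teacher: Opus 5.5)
Your proposal is correct and follows the paper's proof essentially step for step: the comparison $(Tu,u)_{1,k}=(Pu,u)_{1,k}+R$ with $R$ a $2k^2$-weighted $L^2$ remainder, \eqref{3_7_b}--\eqref{3_7_d} for the coarse part, Friedrichs applied to $P_j^\ell u$ for the local parts, and the same $18+8\Lambda_\ell^2$ bound. Your local bound $\|P_j^\ell u\|_{1,k,\Omega_j^\ell}\le\|u|_{\Omega_j^\ell}\|_{1,k,\Omega_j^\ell}$ is a slightly shorter route than the paper's estimate of $\sum_j\|P_j^\ell u\|_{1,k,\Omega_j^\ell}^2$, and there are two harmless slips. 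First, $s<1$ only gives $C_2\Lambda_c(\amax\nmin^{-1}\Cstar)^{1/2}k\Theta^{1/2}(1+\Cstab)<1/4$ (without your extra factor $2$), which is still exactly \eqref{3_7_c}. Second, in the local term you should keep the $n$-weight on $T_j^\ell u-u$ and use $\nmax=1$ only on the $P_j^\ell u$ factor, because the unweighted bound $k\|w\|\le\|w\|_{1,k}$ would cost a factor $\nmin^{-1/2}$.
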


\begin{proof}\,
Let $u \in V^h$.  We proceed in several steps.
\\[1ex]
\textit{Step 1} (\textit{Relation between $(Tu,u)_{1,k}$ and $(Pu,u)_{1,k}$}). 
Using \eqref{eq: 3_11} and the definition of $b$, we obtain
\begin{align*}
	(Pu,u)_{1,k} & {= (u, Pu)_{1,k}}  
	= \biggl(u,E_0 P_0u + \sum^{Q}_{j=1} E^{\ell}_j P^{\ell}_j u\biggr)_{1,k}
	\\
	&=  b(u,E_0 P_0u) + 2k^2(u,E_0 P_0u)_n 
	+ \sum^{Q}_{j=1} \left[ b\bigl(u, E^{\ell}_j P^{\ell}_j u\bigr) + 2k^2\bigl(u,E^{\ell}_j P^{\ell}_j u\bigr)_n \right].
\end{align*}
Then, using \eqref{eq: bT} and \eqref{eq: 2_25_zero}, we obtain   
\begin{align*}
	(Pu,u)_{1,k} &= b(T_0 u,P_0u) + 2k^2(u,E_0 P_0u)_n 
	+ \sum^{Q}_{j=1} \left[ b_{\Omega^{\ell}_j}\bigl(T^{\ell}_j u, P^{\ell}_j u\bigr) + 2k^2\bigl(u,E^{\ell}_j P^{\ell}_j u\bigr)_n \right]
	\\[0.5ex]
	&= (T_0 u,P_0u)_{1,k} - 2k^2(T_0 u,P_0u)_n + 2k^2(u,E_0 P_0u)_n 
	\\[0.5ex] 
	&\quad + \sum^{Q}_{j=1} \left[ \bigl(T^{\ell}_j u, P^{\ell}_j u\bigr)_{1,k,\Omega^{\ell}_j} 
	- 2k^2\bigl(T^{\ell}_j u, {P^{\ell}_j} u\bigr)_{n,\Omega^{\ell}_j} + 2k^2\bigl(u,E^{\ell}_j P^{\ell}_j u\bigr)_n \right].
\end{align*}
Now using the definitions \eqref{eq: 3_10_nested} and \eqref{eq: 3_10_zero} of $P_j^\ell$ and $P_0$, 
we have 
\begin{align*}
  	(Pu,u)_{1,k} 
	&= (E_0 T_0 u,u)_{1,k} - 2k^2(E_0 T_0 u,E_0 P_0u)_n + 2k^2(u,E_0 P_0u)_n 
	\\[0.5ex]
	&\quad + \sum^{Q}_{j=1} \left[ (E^{\ell}_j T^{\ell}_j u, u)_{1,k} 
	- 2k^2\bigl(E^{\ell}_j T^{\ell}_j u, E^{\ell}_j P^{\ell}_j u\bigr)_n + 2k^2\bigl(u,E^{\ell}_j P^{\ell}_j u\bigr)_n \right]. 
\end{align*}
We now rearrange the right-hand side to obtain
\begin{align}
	(Pu,u)_{1,k} &= (E_0 T_0 u,u)_{1,k} + \sum^{Q}_{j=1} \bigl(E^{\ell}_j T^{\ell}_j u, u\bigr)_{1,k} 
	- 2k^2 \biggl(\bigl(E_0 T_0 u - u,E_0 P_0u\bigr)_n 
	+ \sum^{Q}_{j=1} \bigl(E^{\ell}_j T^{\ell}_j u - u, E^{\ell}_j P^{\ell}_j u\bigr)_n \biggr)
    \nonumber\\[0.5ex]
    &= (T u, u)_{1,k} - R, 
    \label{relation}
\end{align}
with $T$ defined in \eqref{eq: 2_26} and
\begin{align}\label{defR}
	R \assign 2k^2 \biggl( \bigl(E_0 T_0 u - u, E_0 P_0u\bigr)_n 
	+ \sum^{Q}_{j=1} \bigl(E^{\ell}_j T^{\ell}_j u - u, E^{\ell}_j P^{\ell}_j u\bigr)_n \biggr). 
\end{align}
\textit{Step 2} (\textit{Bounding the first term in $R$}).
Recalling that $E_0 w_0 = w_0$ for all $w_0\in V_0$, and noting that the assumption in \eqref{eq: 4_3} implies that \eqref{3_7_c} is satisfied,
we can use \eqref{3_7_b}, \eqref{3_7_d} and the fact that $P_0$ is the orthogonal projection onto $V_0$ 
with respect to $(\cdot, \cdot)_{1,k}$, to obtain
\begin{align}
	k^2\bigl(E_0 T_0 u - u, E_0 P_0 u\bigr)_n
	&\le k^2 \|E_0 T_0 u - u\|_n \|E_0 P_0 u\|_n
	= k^2\| T_0 u -u\|_n \| P_0 u\|_n
	\nonumber\\[0.5ex]
	&\le k\|T_0 u -u\|\, \|P_0 u\|_{1,k}
	\nonumber\\[0.5ex]
	&\le C_2\, \Lambda_c (\amax \nmin^{-1} \Cstar)^{1/2}
	k \Theta^{1/2} (1+\Cstab)\|T_0 u - u\|_{1,k}\|P_0 u\|_{1,k}
	\nonumber\\[0.5ex]
	&\le 2C_2\, \Lambda_c (\amax \nmin^{-1} \Cstar)^{1/2}
	k \Theta^{1/2} (1+\Cstab)\|u\|_{1,k}\|P_0 u\|_{1,k}
	\nonumber\\[0.5ex]
	&\le 2C_2\, \Lambda_c (\amax \nmin^{-1} \Cstar)^{1/2}
	k \Theta^{1/2} (1+\Cstab)\|u\|_{1,k}^2.
	\label{eq:10_0}
\end{align}
\textit{Step 3} (\textit{Bounding the second term in $R$}).
For each $j$, we use Friedrichs' inequality \eqref{eq:friedrichs} to arrive at
\begin{align}
	k^2\bigl(E^{\ell}_j T^{\ell}_j u-u,E^{\ell}_j P^{\ell}_j u\bigr)_n 
	&= k^2\bigl({T^{\ell}_j} u-u|_{\Omega^{\ell}_j}, P^{\ell}_j u\bigr)_{n,\Omega^{\ell}_j}
	\le k^2 \big\|T^{\ell}_j u-u|_{\Omega^{\ell}_j}\big\|_{n,\Omega^{\ell}_j}\big\|P^{\ell}_j u\big\|_{n,\Omega^{\ell}_j}
	\nonumber\\[0.5ex]
	&\le k\bigl\|T^{\ell}_j u-u|_{\Omega^{\ell}_j}\big\|_{1,k,\Omega^{\ell}_j}\big\|{P^{\ell}_j} u\big\|_{n,\Omega^{\ell}_j}
	\le k\bigl\|{T^{\ell}_j} u-u|_{\Omega^{\ell}_j}\big\|_{1,k,\Omega^{\ell}_j}
	\frac{H_{\ell}}{\sqrt{2}\,}\big\|{P^{\ell}_j} u\big\|_{1,k,\Omega^{\ell}_j}
	\nonumber\\[0.5ex]
	&\le \frac{k H_{\ell}}{\sqrt{2}\,}
	\Bigl(\|{T^{\ell}_j} u\|_{1,k,\Omega^{\ell}_j}+\big\|u|_{\Omega^{\ell}_j}\big\|_{1,k,{\Omega^{\ell}_j}}\Bigr)
	\big\|P^{\ell}_j u\big\|_{1,k,\Omega^{\ell}_j}.
   \label{lemma411}
\end{align}
Now, note that the hypothesis \eqref{eq: 4_3} implies that $\sqrt{2} k {H_{\ell}} < \sqrt{2}/(3 \Lambda_\ell) < 1$; 
so we can apply Lemma~\ref{lemma_3_6} to obtain from \eqref{lemma411} that
\begin{equation*} 	
	k^2\bigl(E^{\ell}_j T^{\ell}_j u-u, E^{\ell}_j P^{\ell}_j u\bigr)_n 
	\le \frac{3kH_{\ell}}{\sqrt{2}\,}\big\|u|_{\Omega^{\ell}_j}\big\|_{1,k,\Omega^{\ell}_j}\big\|P^{\ell}_j u\big\|_{1,k,\Omega^{\ell}_j}
	< 3kH_{\ell}\big\|u|_{\Omega^{\ell}_j}\big\|_{1,k,\Omega^{\ell}_j}\big\|P^{\ell}_j u\big\|_{1,k,\Omega^{\ell}_j}.
\end{equation*}
Summing over $j$, applying Cauchy--Schwarz and \eqref{overlap}, we obtain
\begin{align}
	& k^2 \sum_{j=1}^{Q} \bigl(E^{\ell}_j T^{\ell}_j u-u, E^{\ell}_j P^{\ell}_j u\bigr)_n 
	\le 3kH_{\ell} \sum_{j=1}^{Q} \big\|u|_{\Omega^{\ell}_j}\big\|_{1,k,\Omega^{\ell}_j}\|P^{\ell}_j u\|_{1,k,\Omega^{\ell}_j}
	\nonumber\\[0.5ex]
	&\le 3kH_{\ell} \Biggl(\sum_{j=1}^{Q} \big\|u|_{\Omega^{\ell}_j}\big\|_{1,k,\Omega^{\ell}_j}^2\Biggr)^{1/2}
	\Biggl( \sum_{j=1}^{Q} \|P^{\ell}_j u\|_{1,k,\Omega^{\ell}_j}^2\Biggr)^{1/2} 
	\le 3kH_{\ell} \Lambda_{\ell}^{1/2} \|u\|_{1,k}\Biggl( \sum_{j=1}^{Q} \|P^{\ell}_j u\|_{1,k,\Omega^{\ell}_j}^2\Biggr)^{1/2}.
	\label{eq:10_1}
\end{align}
To estimate the sum on the right-hand side of \eqref{eq:10_1}, note that
\begin{equation*}
	\|E_j^\ell P_j^\ell u \|_{1,k}^2 
	= (P_j^\ell u,P_j^\ell u)_{1,k,\Omega_j^\ell} 
	= (P_j^\ell u, u)_{1,k,\Omega_j^\ell} = (E_j^\ell P_j^\ell u, u)_{1,k}, 
\end{equation*}
and so, using \eqref{overlap1}, we have
\begin{align*}
	\Bigg\|\sum_{j=1}^{Q} E^{\ell}_j P^{\ell}_j u\Bigg\|_{1,k}^2
	&\le \Lambda_{\ell} \sum_{j=1}^{Q} \|E^{\ell}_j P^{\ell}_j u\|_{1,k}^2 
	= \Lambda_{\ell} \sum_{j=1}^{Q} (E^{\ell}_j P^{\ell}_j u,u)_{1,k}
	\\[0.5ex]
	&= \Lambda_{\ell} \Biggl(\sum_{j=1}^{Q} E^{\ell}_j P^{\ell}_j u,\,u\Biggr)_{1,k}
	\le \Lambda_{\ell} \,\Bigg\|\sum_{j=1}^{Q} E^{\ell}_j P^{\ell}_j u\Bigg\|_{1,k} \|u\|_{1,k}.
\end{align*}
Thus
\begin{align*}
    \sum_{j=1}^{Q} \|P^{\ell}_j u\|_{1,k,\Omega^{\ell}_j}^2
	&= \sum_{j=1}^{Q} (E^{\ell}_j P^{\ell}_j u,u)_{1,k} 
	= \Biggl(\sum_{j=1}^{Q} E^{\ell}_j P^{\ell}_j u,\,u\Biggr)_{1,k} 
	\le \Bigg\|\sum_{j=1}^{Q} E^{\ell}_j P^{\ell}_j u\Bigg\|_{1,k}\|u\|_{1,k}
	\le \Lambda_{\ell} \|u\|_{1,k}^2.
\end{align*}
This, together with \eqref{eq:10_1}, leads to
\begin{equation}\label{eq:10_2}
	k^2 \sum_{j=1}^{Q} \bigl(E^{\ell}_j T^{\ell}_j u-u, E^{\ell}_j P^{\ell}_j u\bigr)_n 
	\le 3kH_{\ell} \Lambda_{\ell} \|u\|_{1,k}^2.
\end{equation}
\textit{Step 4} (\textit{Obtaining \eqref{eq: 4_4}}).
We can now combine \eqref{defR}, \eqref{eq:10_0} and \eqref{eq:10_2} to obtain
\begin{align*}
	|R| &\le 2 \Bigl(2 C_2\, \Lambda_c (\amax \nmin^{-1} \Cstar)^{1/2}
	k \Theta^{1/2} (1+\Cstab) + 3 k \Lambda_\ell H_\ell\Bigr) \|u\|_{1,k}^2
	\nonumber\\[0.5ex]
	&= \frac{s}{C_3 \bigl(1+\Lambda_\ell \Lambda_c^2 \amax \nmin^{-1} \Cstar \Theta \bigr)}	\|u\|_{1,k}^2, 
\end{align*}
with $s$ and $C_3$ as defined in Theorem~\ref{theorem: convergence}. 
Hence, using this, \eqref{relation} and Proposition \ref{prop:Puu}, we obtain
\begin{align*}
	(Tu,u)_{1,k} &= (Pu,u)_{1,k} + R 
	\ge C_3^{-1} \bigl(1 + \Lambda_\ell \Lambda_c^2 \amax \nmin^{-1} \Cstar \Theta \bigr)^{-1} \|u\|_{1,k}^2 - |R| 
	\nonumber\\[0.5ex]
	&\ge C_3^{-1} \bigl(1 + \Lambda_\ell \Lambda_c^2 \amax \nmin^{-1} \Cstar \Theta \bigr)^{-1} (1-s) \|u\|_{1,k}^2, 
\end{align*}
as required. 
\\[1ex]
\textit{Step 5} (\textit{Obtaining \eqref{eq: 4_5}}).
Since $E_0T_0 u = T_0u $, we have 
\begin{equation}\label{eq: 4_18}
	\|Tu\|_{1,k}^2 =  \Bigg\|T_0 u + \sum_{j=1}^{Q} E^{\ell}_j T^{\ell}_j u\Biggr\|_{1,k}^2 
	\le 2\|T_0 u\|_{1,k}^2 + 2\Bigg\| \sum_{j=1}^{Q} E^{\ell}_j T^{\ell}_j u\Bigg\|_{1,k}^2.
\end{equation}
For the first term on the right-hand side of \eqref{eq: 4_18},   Cauchy--Schwarz  and \eqref{3_7_d} yield
\begin{align}
	\|T_0 u\|_{1,k}^2 &= (T_0 u, T_0 u)_{1,k} 
	= (T_0 u - u, T_0 u)_{1,k} + (u, T_0 u)_{1,k} 
	\nonumber\\[0.5ex]
	&\le \|T_0 u - u\|_{1,k}\|T_0 u\|_{1,k} + \|u\|_{1,k}\|T_0 u\|_{1,k}
	\nonumber\\[0.5ex]
	&\le 2\|u\|_{1,k}\|T_0 u\|_{1,k} + \|u\|_{1,k}\|T_0 u\|_{1,k} 
	= 3\|u\|_{1,k}\|T_0 u\|_{1,k}. 
\label{eq: 4_19} 
\end{align}
For the second term on the right-hand side of \eqref{eq: 4_18} we use \eqref{overlap1}, Lemma~\ref{lemma_3_6} and \eqref{overlap} to obtain 
\begin{equation}
	\label{eq:second}
	\Bigg\|\sum_{j=1}^{Q} E^{\ell}_j T^{\ell}_j u\Bigg\|_{1,k}^2 
	\le \Lambda_{\ell} \sum_{j=1}^{Q} \|T^{\ell}_j u\|_{1,k,\Omega^{\ell}_j}^2 
	\le \Lambda_{\ell} \sum_{j=1}^{Q} 4\big\|u|_{\Omega_i}\big\|_{1,k,\Omega^{\ell}_j}^2 
	\le 4 \Lambda_{\ell}^2\|u\|_{1,k}^2. 
\end{equation}
Combining \eqref{eq: 4_19} and \eqref{eq:second} with \eqref{eq: 4_18} we arrive at
\begin{equation*}
	\|Tu\|_{1,k}^2 \le 2\times9\|u\|_{1,k}^2 + 2\times4 \Lambda_{\ell}^2\|u\|_{1,k}^2 
	= (18 + 8\Lambda_{\ell}^2)\|u\|^2_{1,k},
\end{equation*}
which proves \eqref{eq: 4_5}.
\end{proof}

We now complete the proof of Theorem~\ref{theorem: convergence}.

\begin{proof}[Proof of Theorem~\ref{theorem: convergence}]
Using \eqref{eq: 4_4} with \eqref{eq: 2_27} we obtain
\begin{equation*}
	c_1 \|u\|_{1,k}^2 \le (Tu,u)_{1,k} = \bigl\langle \mathbf{M}_{AS,2}^{-1}\mathbf{B}\mathbf{u},\mathbf{u}\bigr\rangle_{\mathbf{D}_{k}},
\end{equation*}
which can be written as 
\begin{equation*}
	c_1 \le \frac{\bigl\langle\mathbf{M}_{AS,2}^{-1}\mathbf{B}\mathbf{u},\mathbf{u}\bigr\rangle_{\mathbf{D}_{k}}}{\|\mathbf{u}\|_{\mathbf{D}_{k}}^2 }.
\end{equation*}
Also, combining \eqref{eq: 4_5} with \eqref{eq: 2_27}, we get
\begin{equation*}
	\|\mathbf{M}_{AS,2}^{-1}\mathbf{B}\mathbf{u}\|_{\mathbf{D}_{k}}^2 
	= \|Tu\|_{1,k}^2 
	\le c_2\|u\|_{1,k}^2 = c_2\|\mathbf{u}\|^2_{\mathbf{D}_{k}},
	\qquad \text{i.e.} \quad 
	\|\mathbf{M}_{AS,2}^{-1}\mathbf{B}\|_{\mathbf{D}_k}^2 \le c_2.
\end{equation*}
The result on GMRES convergence now follows directly from the Elman theory \cite{Elman:1983:VIM}
with $\gamma=\frac{c_1}{c_2}$. 
\end{proof}

\section{Numerical results}
\label{sec:numerics}

This section illustrates the effectiveness of the the two-level $H_k$-GenEO preconditioner \eqref{eq: 2_23}, 
by testing its robustness to wavenumber $k$, scalability with respect to the number of coarse subdomains $N$ 
and sensitivity to the spectral threshold $\tau$, with the latter controlling  the dimension of the coarse space. 
All computations are performed in \texttt{FreeFEM}, using the \texttt{ffddm} framework to provide MPI-parallel tools for domain decomposition. 

The domain $\Omega \assign (0,1)^2$ is partitioned into local and coarse subdomains (see \eqref{covers}), 
each of which are here taken to be non-overlapping squares, extended by one finite element layer on each side of the 
interface to create minimal overlap.  Unless otherwise stated, minimal overlap is used throughout numerical experiments. 
The local matrices $\mathbf{B}^{\ell}_j$  on $\Omega_j^\ell$ in \eqref{eq: 2_23} are factorised using \texttt{MUMPS} 
with the factors reused throughout the iterative solve. 
To obtain the coarse matrix $\mathbf{B}_0$ the generalised eigenproblem \eqref{eq: 5_12} is solved 
on each coarse subdomain $\Omega_i^c$ using \texttt{SLEPc}, and eigenmodes with eigenvalues 
below a prescribed threshold $\tau$ are retained (see \eqref{def:tau}). 
The corresponding coarse basis functions are assembled into a distributed coarse matrix, 
factorised with \texttt{MUMPS} using a dedicated pool of MPI processes. 
The preconditioned system \eqref{eq: 2_24} is solved with GMRES (no restart) until the relative residual is reduced below $10^{-6}$ 
or 200 iterations are reached. We distinguish clearly between the setup phase (assembly, eigenproblems, factorisations) 
and the application phase (local and coarse solves at each iteration). 
Except for \S \ref{subsec:Decoupled}, the local and coarse  covers in \eqref{covers} are taken to be identical, 
so that $N=Q$ and $H_\ell = H_c$. Two representative test cases are considered.

\paragraph{Homogeneous problem.}\, 
We study problem \eqref{eq:problem} with $A = I$ and $n=1$ but with  $k$ increasing. 
The  function $f$ is a  point source  modelled by the  Gaussian:
\(f(x,y)=10^4 \exp(-10^3[(x-\frac{1}{2})^2+(y-\frac{1}{2})^2])\). The discretisation space $V^h$ consists of standard Lagrange elements 
of degree 1 on a uniform Cartesian grid with spacing $h \sim 1/k$, using alternating diagonals to create a conforming simplicial mesh. 

\paragraph{Heterogenous problem.}\, 
This is the same as  the homogenous case, except that the diffusion coefficient $A = I$ is here replaced by $A(\mathbf{x}) = a(\mathbf{x})I$ 
where the heterogeneous function $a$ represents a layered medium as depicted in Figure~\ref{fig:HetPlot_Hk}. 
Here $a$ takes two values, either $a \equiv \amin = 1$ (in the white strips) or $a \equiv \amax > 1$ (in the grey strips). 
The contrast is thus controlled by the single parameter $\amax$. 
\begin{figure}[h]
\label{Fig:AlternatingLay}
\centering
\begin{tikzpicture}[scale=1.5]
    \fill[gray,opacity=0.1] (-1,1) -- (1,1) -- (1,0.8) -- (-1,0.8) -- cycle;
	\fill[gray,opacity=1.0] (-1,0.6) -- (1,0.6) -- (1,0.8) -- (-1,0.8) -- cycle;
	\fill[gray,opacity=0.1] (-1,0.6) -- (1,0.6) -- (1,0.4) -- (-1,0.4) -- cycle;
	\fill[gray,opacity=1.0] (-1,0.2) -- (1,0.2) -- (1,0.4) -- (-1,0.4) -- cycle;
	\fill[gray,opacity=0.1] (-1,0.2) -- (1,0.2) -- (1,0.0) -- (-1,0.0) -- cycle;
	\fill[gray,opacity=1.0] (-1,-0.2) -- (1,-0.2) -- (1,0.0) -- (-1,0.0) -- cycle;
	\fill[gray,opacity=0.1] (-1,-0.2) -- (1,-0.2) -- (1,-0.4) -- (-1,-0.4) -- cycle;
	\fill[gray,opacity=1.0] (-1,-0.6) -- (1,-0.6) -- (1,-0.4) -- (-1,-0.4) -- cycle;
	\fill[gray,opacity=0.1] (-1,-0.6) -- (1,-0.6) -- (1,-0.8) -- (-1,-0.8) -- cycle;
	\fill[gray,opacity=1.0] (-1,-1) -- (1,-1) -- (1,-0.8) -- (-1,-0.8) -- cycle;
    
    \draw[step=0.5cm,gray!50,very thin, shift={(-1,-1)}] (0,0) grid (2,2);
    \draw[gray!50,very thin] (-1,-1) -- (1,1);
    \draw[gray!50,very thin] (-1,1) -- (1,-1);
    \draw[gray!50,very thin] (-1,0) -- (0,1) -- (1,0) -- (0,-1) -- cycle;
    
    \draw (-1,1) -- (-1,-1) -- (1,-1) -- (1,1) -- cycle;    
    \draw (-1.05,1) node[left] {$y = 1$}
          (-1.05,-1) node[left] {$y = 0$}
          (1,-1.05) node[below] {$x = 1$}
          (-1,-1.05) node[below] {$x = 0$};
    \draw[fill=black] (0,0) circle (0.025) node[below] {source $f$};
\end{tikzpicture}
\caption{The function $a$: in the dark regions $a \equiv \amax$, where $\amax > 1$ is a parameter, 
and in the white regions $a \equiv 1$.}
\label{fig:HetPlot_Hk}
\end{figure}

In Tables \ref{tab:merged_tau} and \ref{tab:het_merged_tau} we show (for the homogeneous and heterogeneous cases), 
the dimension of the fine mesh (dim(Fine)), the maximum number of negative eigenvalues per subdomain (Neg), 
the minimum eigenvalue ($\lambda_\text{min}$), the dimension of the coarse space (dim(CS)), 
and the GMRES iteration count (It) for varying $k$, $N$ and $\tau \in\{0,2,0,4,0.6\}$.
\begin{table}[h]
\centering
\scriptsize
\setlength{\tabcolsep}{4pt}
\begin{tabular}{cccc|cc|cc|cc|cc}
\multicolumn{4}{c|}{Problem parameters}
& \multicolumn{2}{c|}{Spectral data}
& \multicolumn{2}{c|}{$\tau=0.2$}
& \multicolumn{2}{c|}{$\tau=0.4$}
& \multicolumn{2}{c}{$\tau=0.6$} \\

$N$ & $k$ & $h^{-1}$ & Dim(Fine)
& Neg. & $\lambda_{\min}$
& Dim(CS) & It.
& Dim(CS) & It.
& Dim(CS) & It. \\
\hline

16 & 20 & 240 & 58081
& 4 & -0.324293
& 144 & 21
& 240 & 15
& 392 & 12 \\

 & 60 & 720 & 519841
& 22 & -0.849475
& 668 & 23
& 1160 & 16
& 1984 & 12 \\

 & 100 & 1200 & 1442401
& 58 & -0.941228
& 1612 & 27
& 2660 & 14
& 4572 & 11 \\

\hline

36 & 20 & 240 & 58081
& 3 & -0.178013
& 220 & 23
& 344 & 18
& 600 & 13 \\

 & 60 & 720 & 519841
& 13 & -0.703879
& 844 & 29
& 1528 & 18
& 2576 & 13 \\

 & 100 & 1200 & 1442401
& 28 & -0.875285
& 1876 & 34
& 3260 & 16
& 5584 & 12 \\

\hline

64 & 20 & 240 & 58081
& 1 & -0.125924
& 224 & 35
& 448 & 20
& 740 & 15 \\

 & 60 & 720 & 519841
& 8 & -0.553713
& 1060 & 33
& 1924 & 18
& 3232 & 13 \\

 & 100 & 1200 & 1442401
& 17 & -0.793739
& 2276 & 35
& 3840 & 21
& 6508 & 13 \\

\hline

100 & 20 & 240 & 58081
& 1 & -0.100482
& 324 & 32
& 684 & 18
& 1044 & 13 \\

 & 60 & 720 & 519841
& 4 & -0.423257
& 1144 & 42
& 2200 & 19
& 3748 & 14 \\

 & 100 & 1200 & 1442401
& 9 & -0.721832
& 1846 & 45
& 3588 & 22
& 5976 & 15 \\

\hline

144 & 20 & 240 & 58081
& 1 & -0.0852859
& 484 & 28
& 672 & 21
& 1096 & 16 \\

 & 60 & 720 & 519841
& 4 & -0.324293
& 1584 & 33
& 2448 & 23
& 4232 & 14 \\

 & 100 & 1200 & 1442401
& 7 & -0.648756
& 2594 & 50
& 4186 & 25
& 6982 & 16 \\

\hline
\end{tabular}
\caption{Homogeneous case: spectral data, and GMRES iterations for varying $k,N,\tau$}
\label{tab:merged_tau}
\end{table}

\begin{table}[h]
\centering
\scriptsize
\hspace*{-0.9cm}
\setlength{\tabcolsep}{3pt}
\begin{tabular}{ccccc|cc|cc|cc|cc}
\multicolumn{5}{c|}{Problem parameters} 
& \multicolumn{2}{c|}{Spectral data}
& \multicolumn{2}{c|}{$\tau=0.2$}
& \multicolumn{2}{c|}{$\tau=0.4$}
& \multicolumn{2}{c}{$\tau=0.6$} \\

$N$ & $\amax$ & $k$ & $h^{-1}$ & Dim(Fine)
& Neg. & $\lambda_{\min}$
& Dim(CS) & It.
& Dim(CS) & It.
& Dim(CS) & It. \\
\hline

16 & 10 & 20 & 240 & 58081
& 2 & -0.0795278
& 112 & 25
& 196 & 19
& 340 & 16 \\

 &  & 60 & 720 & 519841
& 15 & -0.620619
& 474 & 68
& 866 & 19
& 1468 & 15 \\

 &  & 100 & 1200 & 1442401
& 35 & -0.840064
& 1058 & 35
& 1860 & 20
& 3132 & 18 \\
\hline

16 & 1000 & 20 & 240 & 58081
& 2 & -0.0010342
& 112 & 25
& 190 & 19
& 332 & 16 \\

 &  & 60 & 720 & 519841
& 12 & -0.523291
& 452 & 32
& 826 & 20
& 1392 & 17 \\

 &  & 100 & 1200 & 1442401
& 31 & -0.796087
& 974 & 26
& 1758 & 18
& 2974 & 15 \\
\hline

36 & 10 & 20 & 240 & 58081
& 2 & -0.0444599
& 180 & 28
& 322 & 19
& 556 & 16 \\

 &  & 60 & 720 & 519841
& 7 & -0.512559
& 650 & 30
& 1220 & 19
& 2066 & 15 \\

 &  & 100 & 1200 & 1442401
& 17 & -0.785888
& 1350 & 35
& 2456 & 19
& 4182 & 16 \\
\hline

36 & 1000 & 20 & 240 & 58081
& 2 & -0.000860171
& 178 & 26
& 320 & 19
& 544 & 17 \\

 &  & 60 & 720 & 519841
& 7 & -0.473817
& 640 & 28
& 1180 & 20
& 2018 & 16 \\

 &  & 100 & 1200 & 1442401
& 15 & -0.767375
& 1288 & 62
& 2382 & 49
& 4026 & 22 \\
\hline

64 & 10 & 20 & 240 & 58081
& 1 & -0.0381661
& 220 & 31
& 446 & 19
& 732 & 17 \\

 &  & 60 & 720 & 519841
& 6 & -0.437747
& 854 & 56
& 1580 & 50
& 2660 & 46 \\

 &  & 100 & 1200 & 1442401
& 13 & -0.738717
& 1658 & 34
& 3100 & 18
& 5160 & 16 \\
\hline

64 & 1000 & 20 & 240 & 58081
& 1 & -0.00392729
& 218 & 27
& 458 & 20
& 736 & 17 \\

 &  & 60 & 720 & 519841
& 5 & -0.417451
& 830 & 35
& 1578 & 20
& 2638 & 17 \\

 &  & 100 & 1200 & 1442401
& 11 & -0.730374
& 1634 & 34
& 3012 & 21
& 5000 & 17 \\
\hline

100 & 10 & 20 & 240 & 58081
& 1 & -0.0364662
& 318 & 27
& 572 & 19
& 932 & 17 \\

 &  & 60 & 720 & 519841
& 4 & -0.37355
& 1130 & 32
& 2094 & 19
& 3392 & 16 \\

 &  & 100 & 1200 & 1442401
& 12 & -0.689838
& 2142 & 37
& 3882 & 19
& 6242 & 16 \\
\hline

100 & 1000 & 20 & 240 & 58081
& 1 & -0.000587616
& 360 & 23
& 548 & 18
& 932 & 16 \\

 &  & 60 & 720 & 519841
& 3 & -0.361218
& 1194 & 28
& 2122 & 18
& 3352 & 16 \\

 &  & 100 & 1200 & 1442401
& 10 & -0.687193
& 2232 & 25
& 3872 & 19
& 6200 & 16 \\
\hline

144 & 10 & 20 & 240 & 58081
& 1 & -0.0513371
& 410 & 30
& 646 & 21
& 1120 & 17 \\

 &  & 60 & 720 & 519841
& 4 & -0.295028
& 1202 & 82
& 2250 & 23
& 3894 & 18 \\

 &  & 100 & 1200 & 1442401
& 8 & -0.601849
& 2198 & 53
& 4328 & 21
& 7116 & 16 \\
\hline

144 & 1000 & 20 & 240 & 58081
& 1 & -0.016502
& 408 & 25
& 660 & 20
& 1150 & 17 \\

 &  & 60 & 720 & 519841
& 3 & -0.288366
& 1222 & 68
& 2268 & 21
& 3856 & 17 \\

 &  & 100 & 1200 & 1442401
& 8 & -0.599871
& 2162 & 114
& 4266 & 119
& 7032 & 22 \\
\hline

\end{tabular}
 \caption{Heterogeneous case ($\amax=10$ and $1000$): spectral data, and GMRES iterations for varying $k,N,\tau$}
\label{tab:het_merged_tau}
\end{table}

\subsection{Behaviour of the method with respect to different parameters}

\paragraph{Influence of the threshold \(\tau\).}\ 
As expected,  for both homogeneous and heterogeneous problems, \(\tau\) regulates the size of the coarse space and robustness of the preconditioner.
\begin{mybull}
\item 
	For \(\tau=0.2\), only a limited number of eigenfunctions are retained. 
	This leads to smaller coarse spaces but a clear deterioration in robustness as \(k\) increases. 
	As $k$ increases, the GMRES iterations  grow substantially, 
	especially for larger subdomain counts \(N\). 

\item
	For \(\tau=0.4\), a favourable balance is observed. 
	The coarse space dimension grows moderately with \(k\) and \(N\), while the iteration counts mostly remain stable 
	and essentially independent of $k$. 
	This represents an effective compromise between setup cost and iteration count.

\item
	For \(\tau=0.6\), more eigenmodes are included in the coarse space. 
	and \text{Dim(CS)} increases (typically by a factor between \(1.5\) and \(2\) compared to \(\tau=0.4\)), 
	but GMRES convergence becomes nearly independent of both \(k\) and \(N\). 
	Iteration counts flatten, demonstrating enhanced robustness.
\end{mybull}

\paragraph{Dependence on frequency \(k\) and number of subdomains \(N\).}

For all values of $\tau$ tested, the number of negative eigenvalues of \eqref{eq: 5_12} increases with \(k\),
reflecting the growing indefiniteness of the operator. Nevertheless, when \(\tau\) is chosen moderately 
or generously (\(\tau=0.4\) or \(\tau=0.6\)), the resulting coarse space appears to capture the problematic modes which, 
if left out, yield poor convergence.  We observe that the GMRES iteration counts for $\tau = 0.4, 0.6$ remain essentially bounded as \(k\) grows. 
This observed behaviour appears somewhat better than the theoretical estimate for $\tau$ given in \eqref{Hk_results}, 
which grows with $k$.  However, we should be aware that here we have only tested values of $k$ within numerical reach.

While \text{Dim(CS)} increases with \(k\), its rate of increase is moderate compared to the order $k^2$ increase rate of \text{Dim(fine)}. 
For example, simple extrapolation of the results for $\tau = 0.4$ in Table~\ref{tab:merged_tau} show a growth rate of order about $k^{1.4}$ 
for $N= 16$, $k^{1.3}$ for $N=64$ and only  $k^{1.1}$ for $N=144$. 

\paragraph{Effect of heterogeneity.}

Table~\ref{tab:het_merged_tau} exhibits broadly similar behaviour to Table~\ref{tab:merged_tau}, 
although the precise coarse space dimensions and iteration counts do depend  (but only mildly) on the contrast \(\amax\). 
This suggests that there may be scope to sharpen the theory in the heterogeneous case: 
the lower bound for the convergence rate in \eqref{def_gamma} contains an explicit dependence on \(\amax\), 
whereas this dependence is not strongly visible in the present numerical tests. 
In some moderate-frequency cases, heterogeneity even slightly reduces iteration counts, consistent with a reduced magnitude 
of the most negative local eigenvalues. In general, the method demonstrates robustness not only with respect to frequency \(k\) 
and subdomain count \(N\), but also with respect to coefficient variation.

\subsection{Spectral Complexity of Local Eigenproblems}

Table~\ref{tab:4} reports, for $\tau=0.4$, the number of local eigenvalues per subdomain lying below $\tau$, 
separated into negative and positive contributions and maximised (with $\amax=1000$ in the heterogeneous case).  We observe:
\begin{mybull}
\item
	\textbf{Spectral growth with $k$:} For fixed $N$, both Max.Neg and Max.Pos increase as $k$ grows, reflecting 
	the increasing indefiniteness of the GEVP \eqref{eq: 5_12}.
\item
	\textbf{Effect of subdomain size ($N$):} For fixed $k$, increasing $N$ (i.e.\ reducing subdomain size) 
	significantly decreases both negative and small positive eigenvalues.  For instance, at $k=60$, Max.Neg decreases 
	from $22$ (for $N=16$) to $4$ (for $N=144$).  This agrees with theory: when $H_c $ is small enough, the GEVPs \eqref{eq: 5_12} 
	will become positive definite or only weakly indefinite (cf.\ Lemma~\ref{lemma_3_4}).
\end{mybull}
\vspace{-0.5cm}
\begin{table}[h!]
\centering
\scriptsize
\begin{tabular}{cccc|cc|cc}
\multicolumn{4}{c|}{} & \multicolumn{2}{c}{Homogeneous} & \multicolumn{2}{c}{Heterogeneous} \\
$N$ & $k$ & $h^{-1}$  & Dim(Fine) & Max.Neg. & Max.Pos & Max.Neg. & Max.Pos   \\
\hline
16 &  20 & 240 & 58081 & 4 & 14 & 2 & 16 \\
 &  40 & 480 & 231361 & 13 & 36 & 7 & 35 \\
 &  60 & 720 & 519841 & 22 & 64 & 12 & 61 \\
 &  80 & 960 & 923521 & 41 & 91 & 18 & 90 \\
 &  100 & 1200 & 1442401 & 58 & 132 & 31 & 120 \\
\hline
36 &  20 & 240 & 58081 & 3 & 9 & 2 & 12 \\
 &  40 & 480 & 231361 & 6 & 22 & 3 & 24 \\
 &  60 & 720 & 519841 & 13 & 36 & 7 & 40 \\
 &  80 & 960 & 923521 & 20 & 52 & 9 & 55 \\
 &  100 & 1200 & 1442401 & 28 & 71 & 15 & 75 \\
\hline
64 &  20 & 240 & 58081 & 1 & 7 & 1 & 8 \\
 &  40 & 480 & 231361 & 4 & 14 & 3 & 17 \\
 &  60 & 720 & 519841 & 8 & 26 & 5 & 29 \\
 &  80 & 960 & 923521 & 13 & 36 & 8 & 40 \\
 &  100 & 1200 & 1442401 & 17 & 48 & 11 & 55 \\
\hline
100 &  20 & 240 & 58081 & 1 & 7 & 1 & 7 \\
 &  40 & 480 & 231361 & 3 & 13 & 2 & 17 \\
 &  60 & 720 & 519841 & 4 & 20 & 3 & 26 \\
 &  80 & 960 & 923521 & 8 & 26 & 6 & 38 \\
 &  100 & 1200 & 1442401 & 13 & 36 & 10 & 47 \\
\hline
144 &  20 & 240 & 58081 & 1 & 4 & 1 & 6 \\
 &  40 & 480 & 231361 & 3 & 9 & 2 & 11 \\
 &  60 & 720 & 519841 & 4 & 14 & 3 & 19 \\
 &  80 & 960 & 923521 & 6 & 22 & 6 & 24 \\
 &  100 & 1200 & 1442401 & 8 & 31 & 8 & 34 \\
\hline
\end{tabular}
\caption{For \(\tau=0.4\), maximum number of negative eigenvalues and maximum number of positive eigenvalues 
below the threshold per subdomain, for the homogeneous and heterogeneous cases.}
\label{tab:4}
\end{table}

\subsection{Eigenfunction behaviour and threshold Selection}
A striking feature of Tables \ref{tab:merged_tau} and \ref{tab:het_merged_tau} is that $\lambda_{\min}$ 
always appears bounded below by $-1$, whereas in Theorem~\ref{est_below} we could only prove the lower bound $-\Ceig$ 
with a fairly complicated $\Ceig$.  Some support for the conjecture $\Ceig = 1$ could be found by looking at \eqref{new3}, 
simplified by ignoring the POU:
\begin{equation} \label{cheat} 
	- \nabla\cdot(A \nabla u) - k^2 u = \lambda\bigl(-\nabla\cdot(A \nabla u) + k^2 u\bigr), \quad \text{with Neumann boundary condition}.
\end{equation}
An easy manipulation shows that $-\nabla\cdot( A \nabla u) = ((1+ \lambda)/(1-\lambda))k^2 u$, 
with $\lambda = 1$ not possible since it would imply $u = 0$.  From this it follows that
\begin{equation*} 
	\lambda = \frac{\mu_j - k^2}{\mu_j + k^2} =  -1 + \frac{2 \mu_j}{\mu_j + k^2 } \ge  -1, 
\end{equation*}
where $\mu_j \ge 0$ is the $j$th Neumann eigenvalue of the  operator $- \nabla\cdot(A \nabla u)$.
Interestingly, this also suggests that, for $\lambda$ near $-1$, the eigenfunctions of \eqref{cheat} 
are (non-oscillatory) Neumann eigenfunctions corresponding to small eigenvalues of $ - \nabla\cdot(A \nabla u)$, while
$\lambda \approx 0$ corresponds to $\mu_j \approx k^2$ with corresponding eigenfunctions oscillating
(roughly) with period $\sim k^{-1}$.
Confirmation of this for the homogeneous case is given in Figure~\ref{fig:EigRangeHomog},
which displays eigenfunctions of problem \eqref{eq: 5_12} (with the POU)  on a floating subdomain
(i.e.\ one that does not intersect the global boundary). 
We can clearly see that as $\lambda$ increases from about $ -0.94$ to $6.5 \times 10^{-5}$, the eigenfunctions become increasingly oscillatory.   
        
\begin{figure}[h!]
\centering
\setlength{\tabcolsep}{2pt}
\renewcommand{\arraystretch}{1.0}

\begin{tabular}{ccc}
\includegraphics[width=0.30\textwidth]{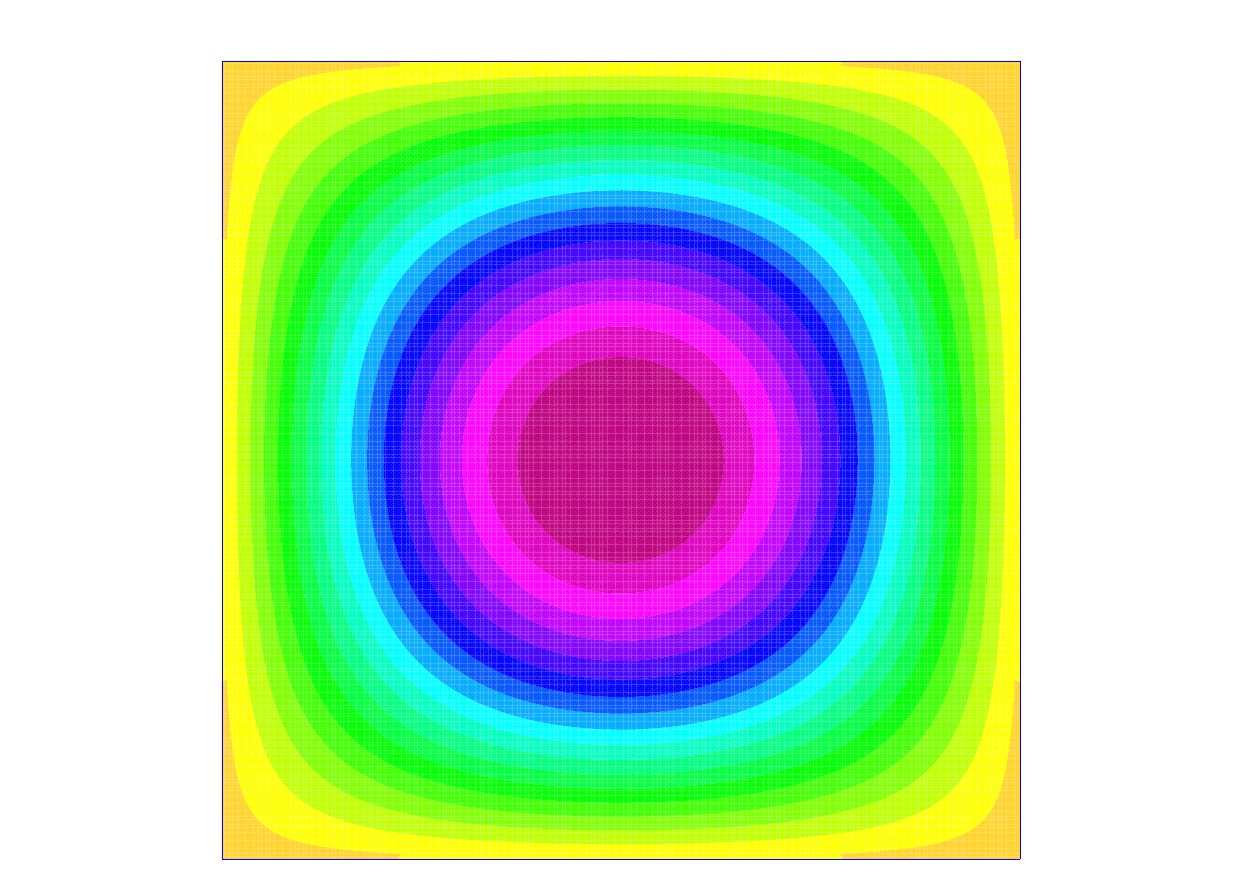} &
\includegraphics[width=0.30\textwidth]{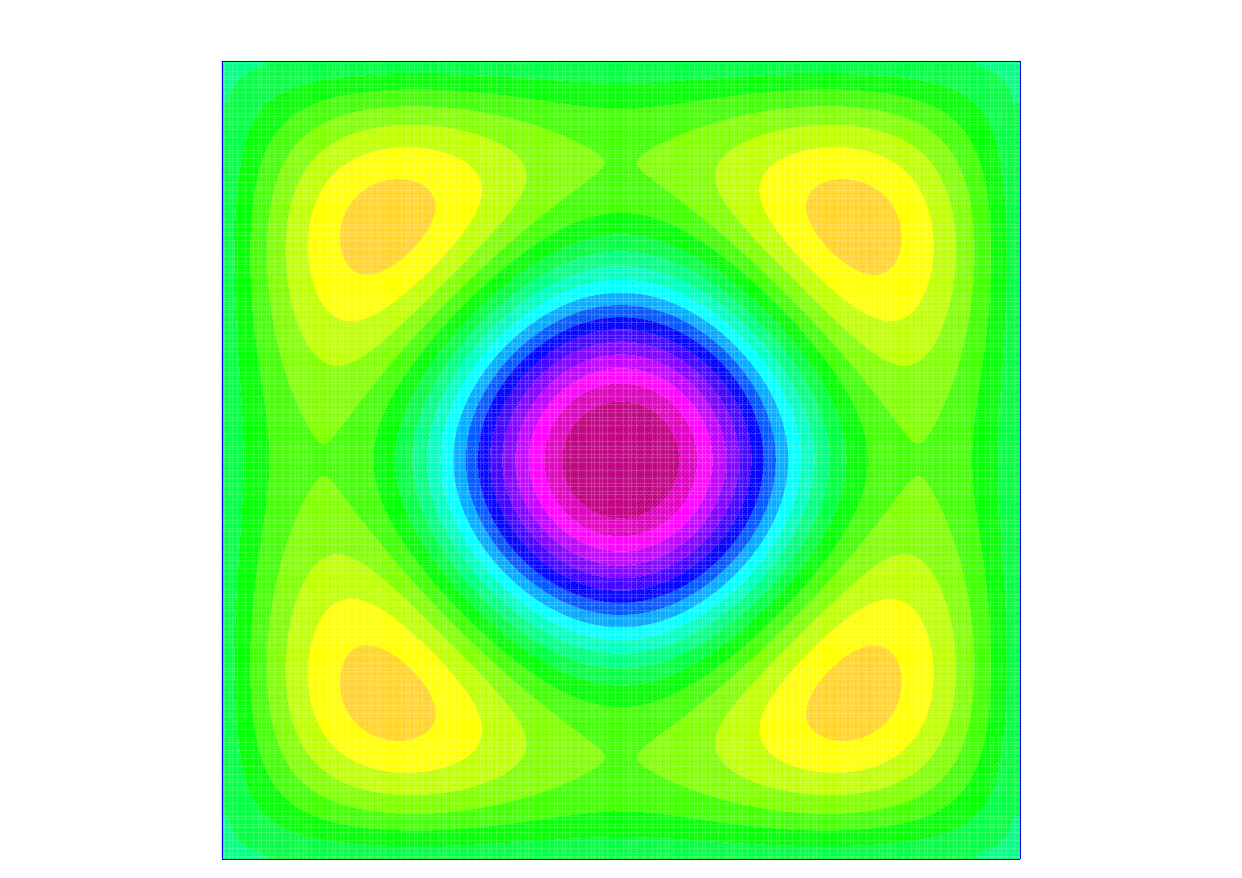} &
\includegraphics[width=0.30\textwidth]{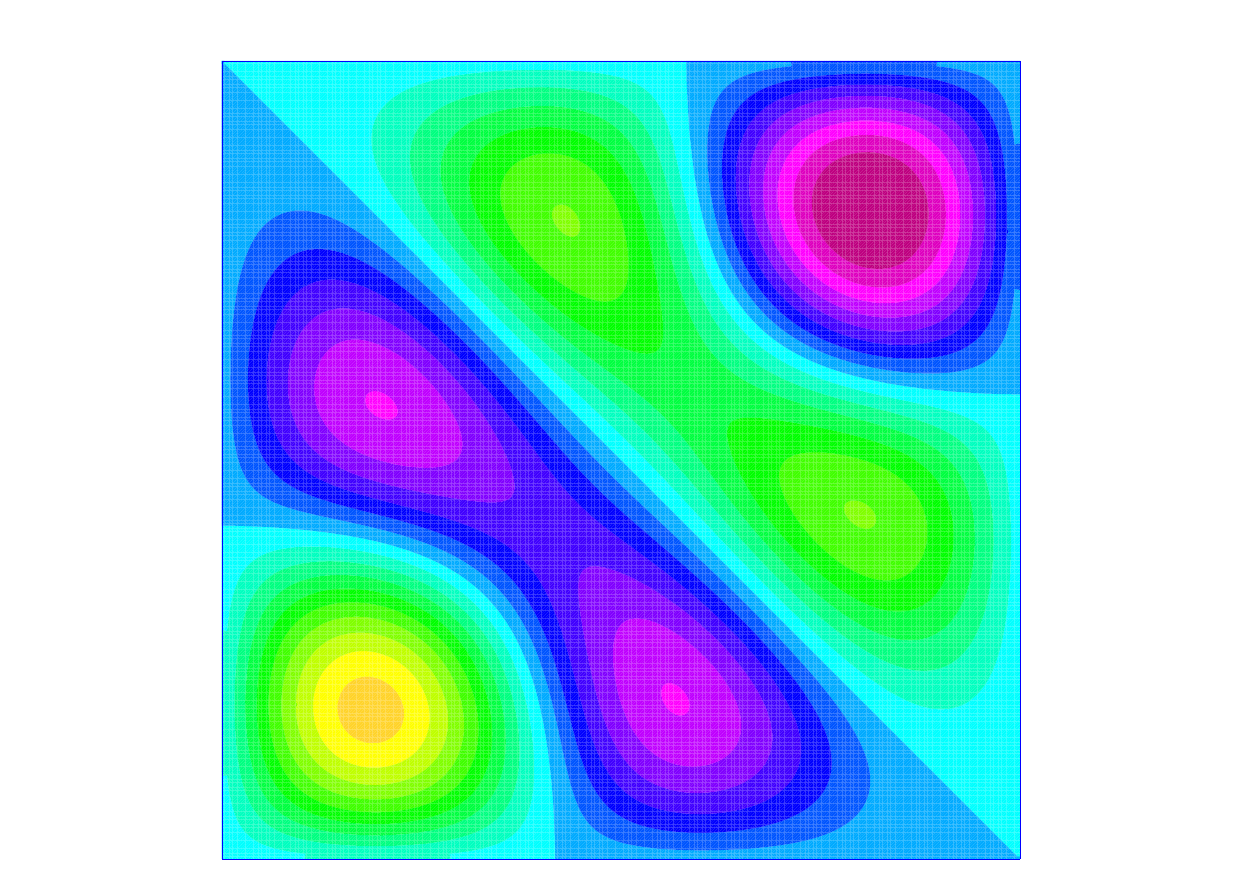} \\
{\scriptsize $\lambda=-0.941228$} &
{\scriptsize $\lambda=-0.738817$} &
{\scriptsize $\lambda=-0.674188$} \\[3pt]

\includegraphics[width=0.30\textwidth]{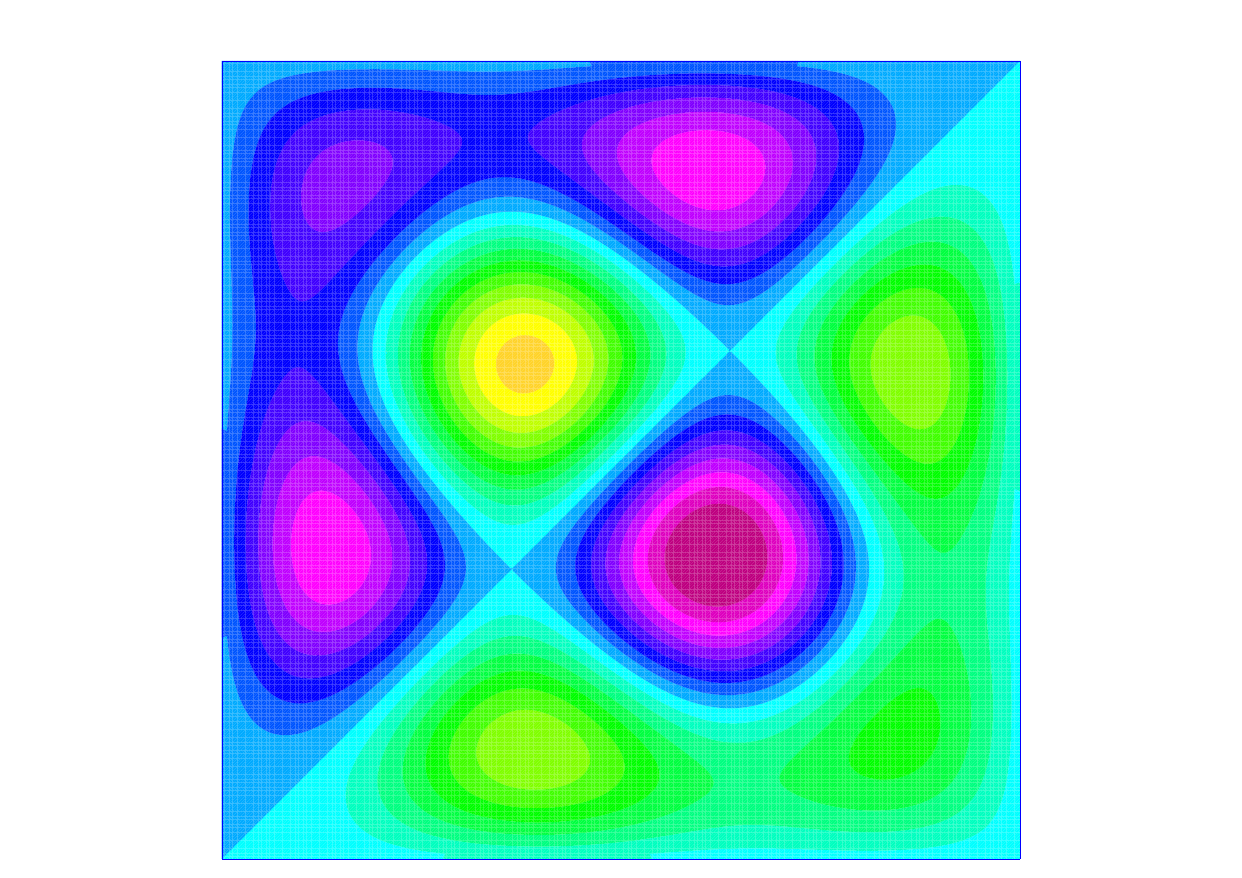} &
\includegraphics[width=0.30\textwidth]{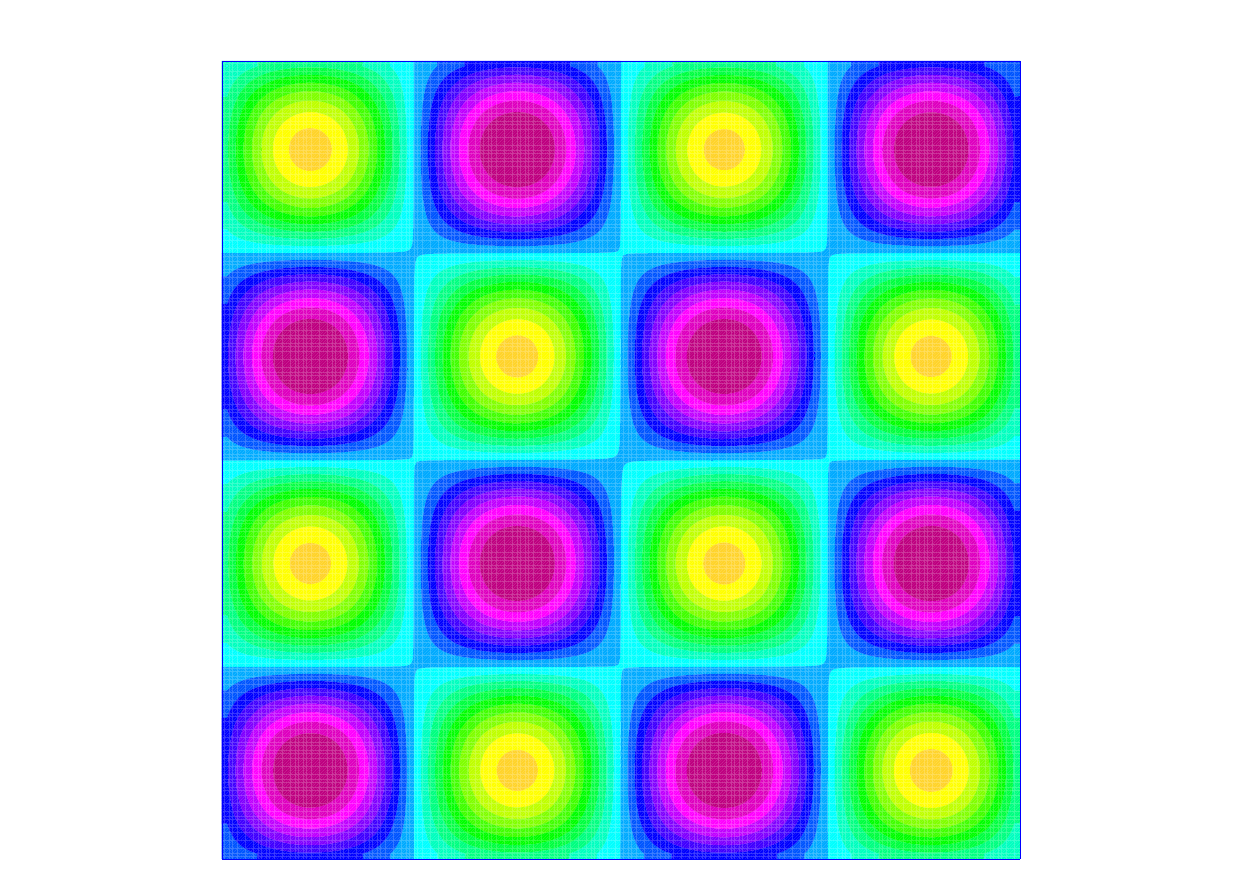} &
\includegraphics[width=0.30\textwidth]{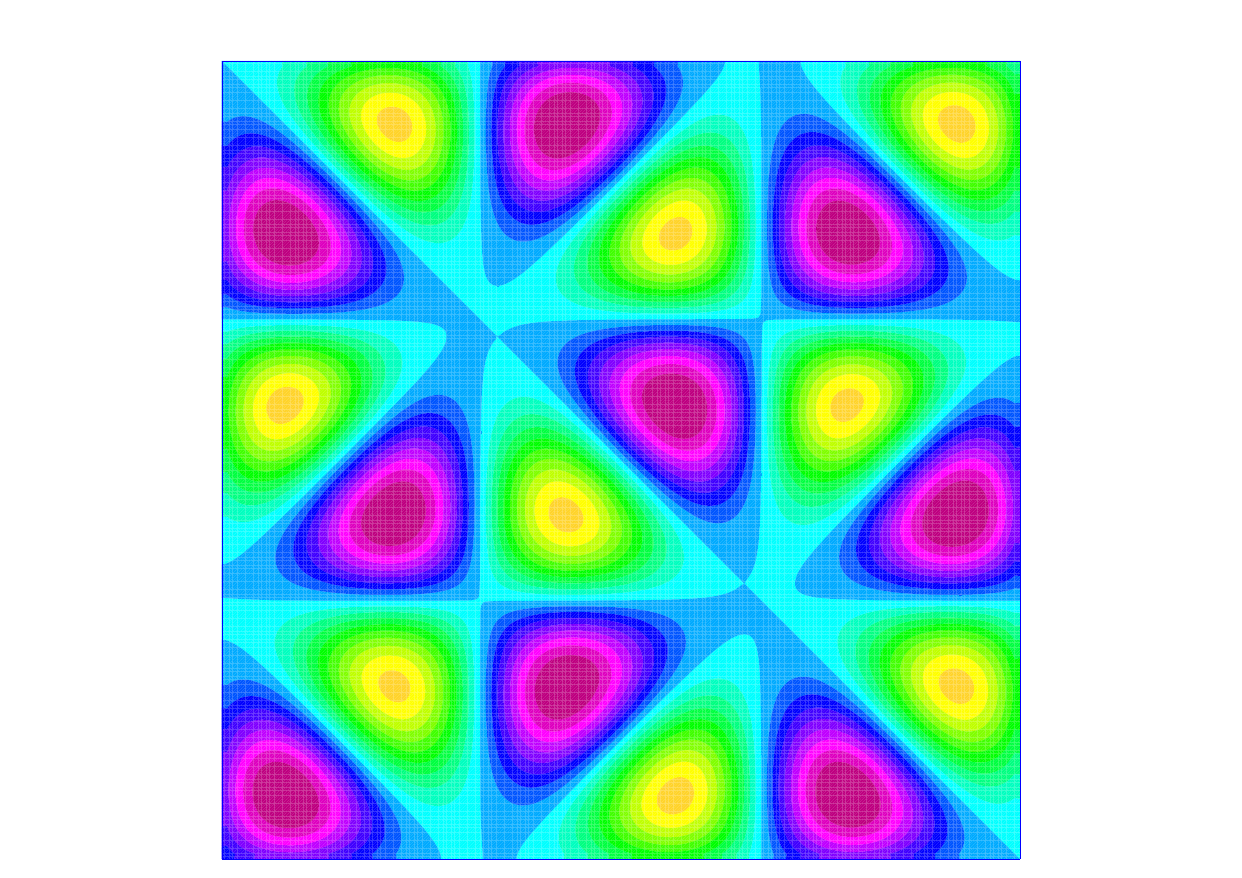} \\
{\scriptsize $\lambda=-0.595937$} &
{\scriptsize $\lambda=-0.36627$} &
{\scriptsize $\lambda=-0.227986$} \\[3pt]

  \includegraphics[width=0.30\textwidth]{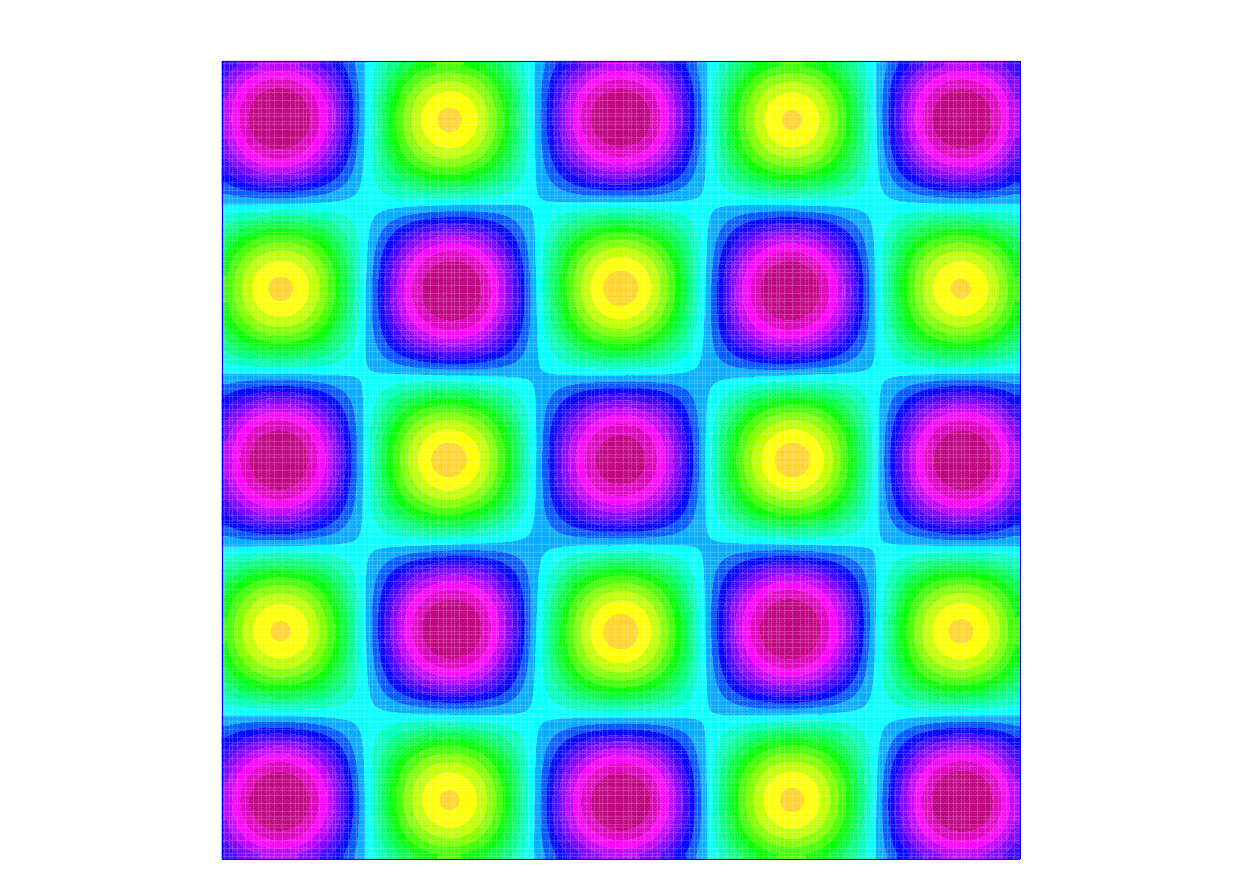} &
  \includegraphics[width=0.30\textwidth]{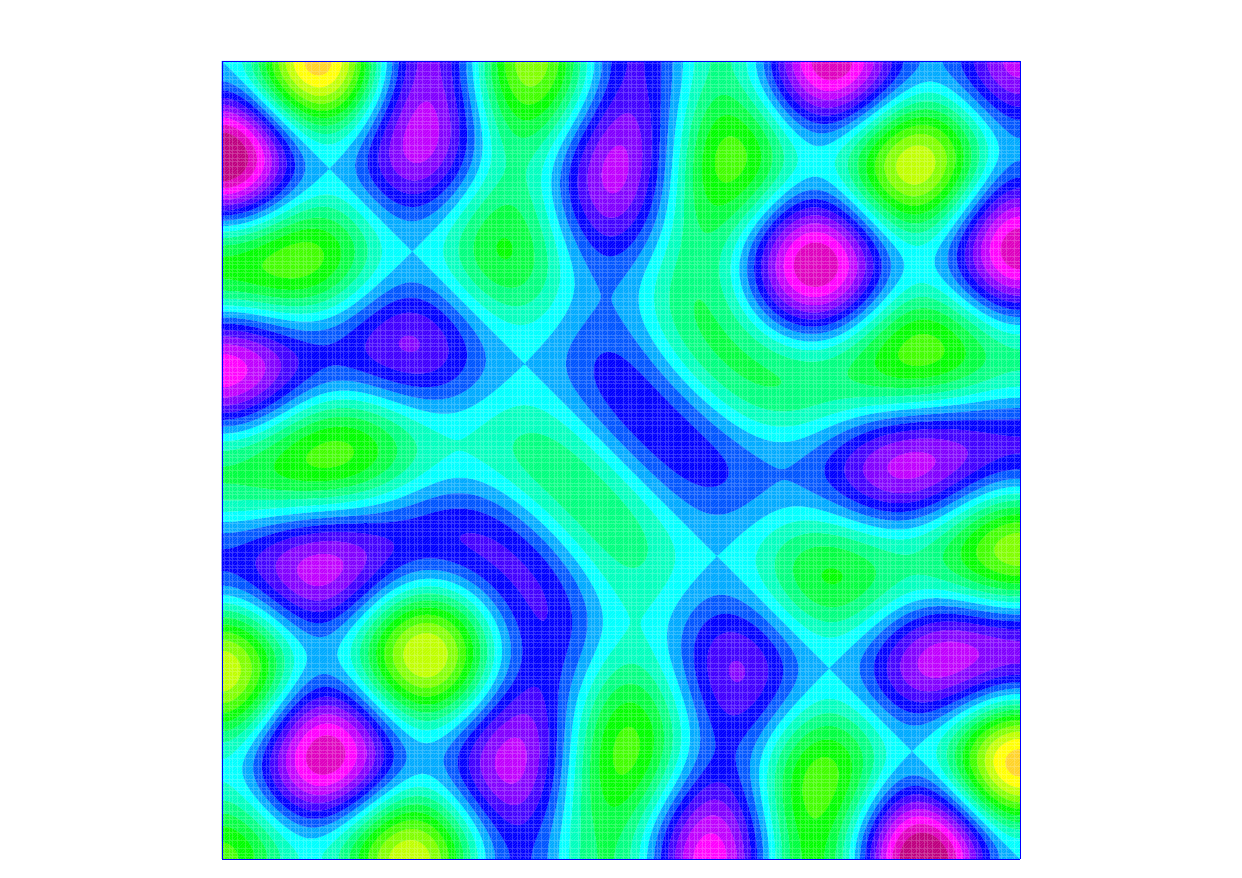} &
  \includegraphics[width=0.30\textwidth]{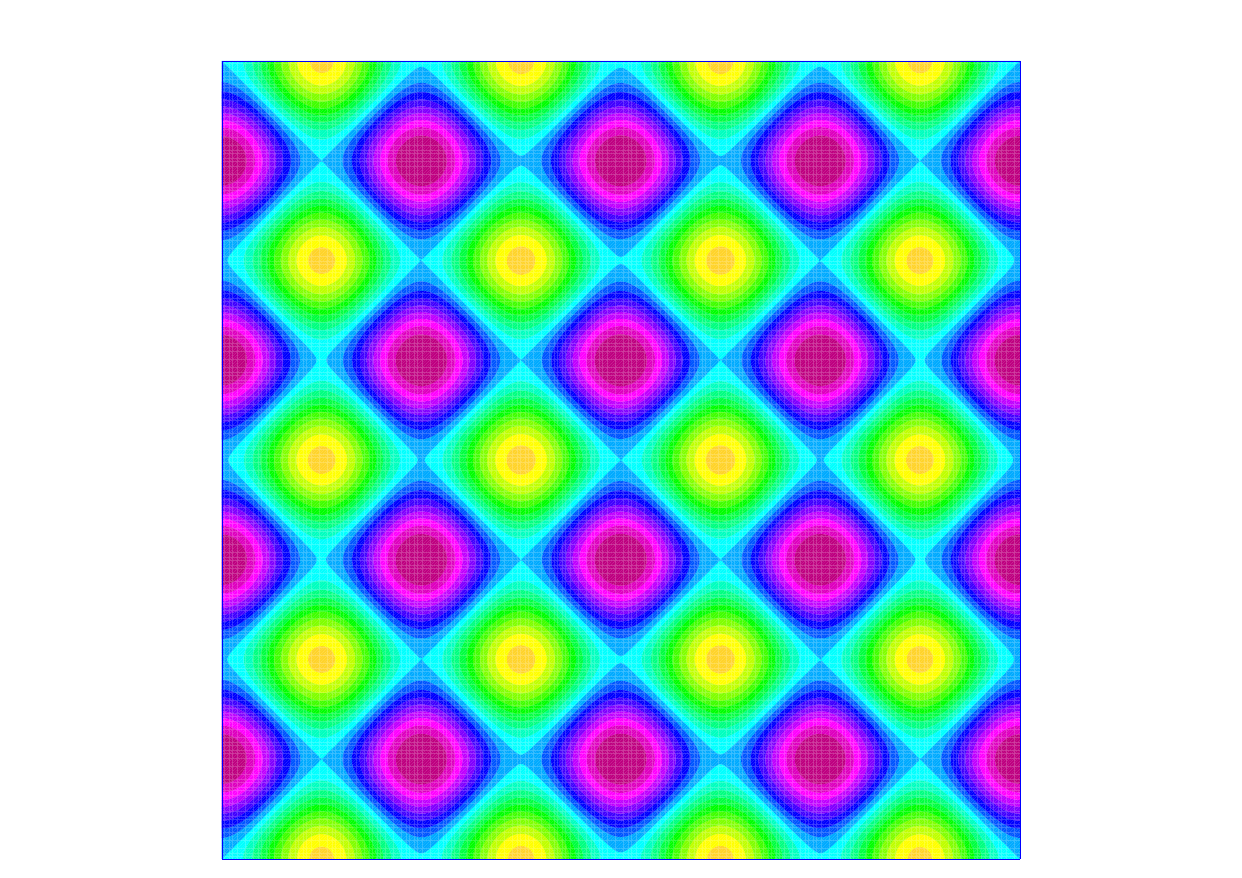} \\
{\scriptsize $\lambda=-0.187746$} &
{\scriptsize $\lambda=0.00348611$} &
{\scriptsize $\lambda=-0.000630012$}\\
\end{tabular}

\caption{Eigenvector plots for $k=100$ and $h^{-1}=1200$ (homogeneous case). Eigenfunctions are normalised in $\ell_1$ norm. Red/blue colours correspond to positive values and yellow/green to negative ones}
\label{fig:EigRangeHomog}
\end{figure}

The plots in Figure~\ref{fig:EigRangeHomog} for $\lambda$ near zero are reminiscent of previous work 
on plane wave based coarse spaces for Helmholtz problems (e.g.\ \cite{KiSa:07} and the references therein). 
Given this, it seems reasonable to expect that the `symmetric strategy' of retaining only $\lambda \in [-\tau, \tau]$ 
rather than the `upper bound' strategy  $\lambda \le \tau$ may reduce coarse space size without risking a loss of $k$-robustness.
In Table~\ref{tab:+-0.6}, we compare the symmetric and upper bound strategies when  
applied to the homogeneous problem for $\tau = 0.6$.  We see that in this case the symmetric strategy works well 
with little degradation of iteration count and a modest reduction of \text{dim(CS)}. 
However, we found the symmetric strategy worked less well when applied with $\tau = 0.2$. 
So, while the symmetric strategy can be useful in practice, the choice of $\tau$ has to be chosen 
after some  experimentation.
\begin{table}[h!]
    \centering
    \scriptsize
    \begin{tabular}{cccc|cccc|cccc}
 & & & &\multicolumn{4}{c|}{$\lambda \in [-0.6,0.6]$} & \multicolumn{4}{c}{$\lambda \leq 0.6$} \\
 &  &  & Dim & Max.Num. &  & Dim & It. & Max.Num. &  & Dim & It.  \\
$N$ & $k$ & $h^{-1}$ & (Fine) & Neg. & $\lambda_\text{min}$ & (CS) & Count & Neg. & $\lambda_\text{min}$ & (CS) & Count \\
\hline
16 & 20 & 240 & 58081 & 4 & -0.324293 & 392 & 14 & 4 & -0.324293 & 392 & 14 \\
 & 60 & 720 & 519841 & 19 & -0.52068 & 1936 & 15 & 22 & -0.849475 & 1984 & 15 \\
 & 100 & 1200 & 1442401 & 50 & -0.595937 & 4444 & 30 & 58 & -0.941228 & 4572 & 29 \\
\hline
36 & 20 & 240 & 58081 & 3 & -0.178013 & 600 & 15 & 3 & -0.178013 & 600 & 15 \\
 & 60 & 720 & 519841 & 12 & -0.419254 & 2540 & 15 & 13 & -0.703879 & 2576 & 15 \\
 & 100 & 1200 & 1442401 & 25 & -0.587053 & 5476 & 54 & 28 & -0.875285 & 5584 & 56 \\
\hline
64 & 20 & 240 & 58081 & 1 & -0.125924 & 740 & 15 & 1 & -0.125924 & 740 & 15 \\
 & 60 & 720 & 519841 & 8 & -0.553713 & 3232 & 15 & 8 & -0.553713 & 3232 & 15 \\
 & 100 & 1200 & 1442401 & 16 & -0.562646 & 6444 & 15 & 17 & -0.793739 & 6508 & 15 \\
\hline
100 & 20 & 240 & 58081 & 1 & -0.100482 & 1044 & 17 & 1 & -0.100482 & 1044 & 17 \\
 & 60 & 720 & 519841 & 4 & -0.423257 & 3748 & 16 & 4 & -0.423257 & 3748 & 16 \\
 & 100 & 1200 & 1442401 & 12 & -0.419254 & 7436 & 16 & 13 & -0.703879 & 7536 & 15 \\
\hline
144 & 20 & 240 & 58081 & 1 & -0.0852859 & 1096 & 17 & 1 & -0.0852859 & 1096 & 17 \\
 & 60 & 720 & 519841 & 4 & -0.324293 & 4232 & 16 & 4 & -0.324293 & 4232 & 16 \\
 & 100 & 1200 & 1442401 & 7 & -0.599849 & 8400 & 16 & 8 & -0.612531 & 8500 & 16 \\
\hline
\end{tabular}
\caption{Homogeneous problem: symmetric and upper bound strategies}
    \label{tab:+-0.6}
\end{table}

\subsection{Decoupled decomposition}
\label{subsec:Decoupled}

We now investigate the effect of constructing the coarse space on a decomposition that differs from that used for the one-level method. 
Specifically, the one-level preconditioner is built on $Q=144$ subdomains, while the coarse space is constructed 
using a separate decomposition with $N \in \{16,25,36,64,100,144\}$ subdomains. 
The generalised eigenproblems are solved on these coarse patches. 
In this part we will limit our investigation to the homogeneous case.
Table~\ref{tab:SepDecompTau0.6} reports the coarse space dimension (CS) and GMRES iteration counts for $\tau=0.6$. 
Several observations can be made:

\begin{table}[h]
    \centering
    \scriptsize
\begin{tabular}{ccc|cc|cc|cc|cc|cc|cc}
\multicolumn{3}{c|}{} & \multicolumn{2}{c|}{$N = 144$} & \multicolumn{2}{c|}{$N = 100$} & \multicolumn{2}{c|}{$N = 64$} & \multicolumn{2}{c|}{$N = 36$} & \multicolumn{2}{c|}{$N = 25$} & \multicolumn{2}{c}{$N_c = 16$} \\
$k$ & $h^{-1}$ & Dim(Fine) & CS & It. & CS & It. & CS & It. & CS & It. & CS & It. & CS & It. \\
\hline
20 & 240 & 58081   & 1096 & 17 & 1044 & 20 & 740  & 24 & 600  & 19 & 530  & 18 & 392  & 20 \\
40 & 480 & 231361  & 2640 & 20 & 2360 & 23 & 1800 & 24 & 1404 & 25 & 1230 & 21 & 1068 & 23 \\
60 & 720 & 519841  & 4232 & 16 & 3748 & 23 & 3232 & 20 & 2576 & 20 & 2285 & 17 & 1984 & 20 \\
80 & 960 & 923521  & 6048 & 33 & 5360 & 52 & 4732 & 45 & 3908 & 42 & 3527 & 44 & 3152 & 41 \\
100 & 1200 & 1442401 & 8500 & 16 & 7536 & 19 & 6508 & 20 & 5584 & 19 & 5065 & 18 & 4572 & 19 \\
\hline
Growth & -- & -- &
$\mathcal O(k^{1.2})$ & &
$\mathcal O(k^{1.2})$ & &
$\mathcal O(k^{1.4})$ & &
$\mathcal O(k^{1.4})$ & &
$\mathcal O(k^{1.4})$ & &
$\mathcal O(k^{1.5})$ &
\\
\end{tabular}
    \caption{Coarse space size (CS) and iteration count (It.) for a one-level decomposition
    of $Q=144$ subdomains and varying coarse-level decompositions $N$ with
    threshold parameter $\tau=0.6$ in the homogeneous domain. The final row
    reports the empirical growth of the coarse space size with respect to the
    wavenumber $k$, obtained from a least-squares fit of $\log(\mathrm{CS})$
    against $\log(k)$. In all cases, the coarse space size grows substantially
    more slowly than the fine-space dimension, which exhibits
    $\mathcal{O}(k^2)$ growth.}
    \label{tab:SepDecompTau0.6}
\end{table}

\begin{mybull}
\item
	For most values of $k$, the iteration counts remain comparable across all coarse decompositions, 
	even when the coarse space dimension is reduced by $40$--$60\%$.  For the coarsest decompositions, 
	the increase in iterations is modest (typically $1$--$4$ iterations), indicating that substantial memory savings 
	can be achieved with minimal loss of robustness.
\item
	The case $k=80$ exhibits unusually high iteration counts across all decompositions, including $N=144$. 
	This behaviour is more pronounced when smaller thresholds are used and is likely linked to the proximity of $k$ 
	to a Dirichlet eigenvalue of the Laplacian, amplifying indefiniteness effects.
\end{mybull}

Overall, in the homogeneous setting, decoupling the decompositions provides an effective trade-off: 
reducing the coarse space size leads only to a mild increase in iteration counts.
When memory constraints are relevant, coarser coarse-level decompositions are therefore attractive.

\section{Conclusion}

We have developed and analysed a two-level domain decomposition preconditioner for indefinite Helmholtz-type problems, 
in which the coarse space ($H_k$-GenEO) is constructed directly from local generalised eigenvalue problems 
involving the full indefinite operator.  This contrasts with $\Delta$-GenEO and $\Delta_k$-GenEO approaches, which rely on nearby SPD formulations.

A central contribution of this work is the theoretical framework for the underlying indefinite eigenproblem and the 
resulting $k$-explicit sufficient conditions for robustness. Practically, this requires the fine subdomain diameter 
to scale with the wavelength and the spectral threshold $\tau$ to retain all negative and sufficiently small positive local modes. 
Notably, the coarse diameter $H_c$ does not enter the robustness conditions, allowing decoupling of the one-level and coarse decompositions.

The numerical experiments on homogeneous and layered heterogeneous media confirm the theoretical findings. 
Once $\tau$ is chosen sufficiently large, iteration counts remain stable with respect to both frequency $k$ and subdomain count $N$. 
For $\tau=0.4$, GMRES typically converges in approximately $14$--$25$ iterations over broad parameter ranges, 
while $\tau=0.6$ further flattens iteration counts to roughly $11$--$16$ at the expense of a larger coarse space. 
In contrast, overly aggressive truncation (e.g.\ $\tau=0.2$) reduces coarse space size but compromises robustness at higher frequencies. 

Two-sided thresholds offer limited benefit for moderate-to-large $\tau$, as excluding mildly negative modes has 
negligible impact on convergence.  However, for small $\tau$, imposing a negative lower bound can remove oscillatory unstable modes 
and significantly degrade performance.

Finally, decoupled decompositions prove effective in homogeneous settings, enabling substantial coarse-space reduction 
with only minor iteration increases. In heterogeneous media, however, robustness depends on aligning the coarse partition 
with the coefficient structure, highlighting the importance of geometric compatibility between decomposition and heterogeneity.

Overall, by constructing the coarse space from the indefinite operator itself and establishing explicit robustness criteria, 
$H_k$-GenEO provides a practical and scalable preconditioner whose convergence is stable with respect to frequency, 
heterogeneity, and subdomain count.

\paragraph{Acknowledgement} 
IGG thanks Alastair Spence for valuable discussions on the material in \S \ref{subsec:abstract}.
MF was funded by a studentship of the Engineering and Physical Sciences Research Council.

\end{document}